%%%%%%%%%%%%%%%%%%%%%%%%%%%%%%%%%%%%%%%%%%%%%%%%%%%%%%%%%%%%%%%%%
% This template lays out the Epiga style
%%%%%%%%%%%%%%%%%%%%%%%%%%%%%%%%%%%%%%%%%%%%%%%%%%%%%%%%%%%%%%%%%
\documentclass[11pt]{amsart}

\usepackage{epigamath}

%%%%%%%%%%%%%%%%%%%%%%%%%%%%%%%%%%%%%%%%%%%%%%
% Comment/uncomment/modify for other languages
%%%%%%%%%%%%%%%%%%%%%%%%%%%%%%%%%%%%%%%%%%%%%%

%\usepackage[french]{babel}
\usepackage[english]{babel}

%%%%%%%%%%%%%%%%%%%%%%%%%%%%%%%%%%%%%
% Specify the numbering of equations
%%%%%%%%%%%%%%%%%%%%%%%%%%%%%%%%%%%%%

%\numberwithin{equation}{section}

%%%%%%%%%%%%%%%%%%%%%%%%%%%%%%%%%%%%%
% Specify any other package you need
%%%%%%%%%%%%%%%%%%%%%%%%%%%%%%%%%%%%%

\usepackage{mathrsfs}
\usepackage{array, boldline, makecell, booktabs}
\definecolor{gray}{gray}{0.4}
\usepackage{amssymb}
\usepackage[all]{xy}
\usepackage{multirow,multicol}

%%%%%%%%%%%%%%%%%%%%%%%%%%%%%%%%%%%%%
% Define any new environment you need
%%%%%%%%%%%%%%%%%%%%%%%%%%%%%%%%%%%%%

\newtheorem{theorem}{Theorem}[section]
\newtheorem{pro}[theorem]{Proposition}
\newtheorem{lemma}[theorem]{Lemma}
\newtheorem{cor}[theorem]{Corollary}

\theoremstyle{definition}

\theoremstyle{remark}

\newtheorem{remark}[theorem]{Remark}

%%%%%%%%%%%%%%%%%%%%%%%%%%%%%%%%%%%%%
% Define any new command you need
%%%%%%%%%%%%%%%%%%%%%%%%%%%%%%%%%%%%%

\def\lexp#1#2{\kern\scriptspace\vphantom{#2}^{#1}\kern-\scriptspace#2}

\def\equat{\refstepcounter{theorem}\begin{equation}}
\def\endequat{\end{equation}}

\def\vide{\varnothing}

\def\fb{{\mathbf f}}

\def\Ab{{\mathbf A}}
\def\Gb{{\mathbf G}}
\def\Lb{{\mathbf L}}
\def\Pb{{\mathbf P}}
\def\Sb{{\mathbf S}}

\def\Hrm{{\mathrm{H}}}
\def\Nrm{{\mathrm{N}}}
\def\Trm{{\mathrm{T}}}
\def\Wrm{{\mathrm{W}}}
\def\Zrm{{\mathrm{Z}}}

\def\AC{{\mathcal{A}}}
\def\EC{{\mathcal{E}}}
\def\HC{{\mathcal{H}}}
\def\KC{{\mathcal{K}}}
\def\OC{{\mathcal{O}}}
\def\RC{{\mathcal{R}}}
\def\SC{{\mathcal{S}}}
\def\UC{{\mathcal{U}}}
\def\XC{{\mathcal{X}}}
\def\ZC{{\mathcal{Z}}}

\def\SG{{\mathfrak S}}

\def\AM{{\mathbb{A}}}
\def\CM{{\mathbb{C}}}
\def\PM{{\mathbb{P}}}
\def\QM{{\mathbb{Q}}}

\def\gti{{\tilde{g}}}
\def\Gti{{\tilde{G}}}
\def\Xti{{\tilde{X}}}

\def\Gamt{{\tilde{\Gamma}}}

\def\Xhat{{\hat{X}}}

\def\OCh{{\hat{\mathcal{O}}}}

\def\a{\alpha}
\def\g{\gamma}
\def\d{\delta}
\def\r{\rho}
\def\s{\sigma}
\def\t{\tau}
\def\z{\zeta}

\def\G{\Gamma}
\def\O{\Omega}

\def\mub{{\boldsymbol{\mu}}}

\def\bfit{\bfseries\itshape}

\def\DS{\displaystyle}
\def\SSS{\scriptscriptstyle}
\def\slv{{\SSS{\mathrm{SL}}}}

\DeclareMathOperator{\Ker}{{\mathrm{Ker}}}
\DeclareMathOperator{\Refle}{Ref}

\DeclareMathOperator{\Der}{Der}

\DeclareMathOperator{\Spec}{{\mathrm{Spec}}}

\def\codim{\operatorname{codim}\nolimits}

\def\Deg{{\mathrm{Deg}}}
\def\Codeg{{\mathrm{Codeg}}}

\def\longto{\longrightarrow}

\newcommand{\longiso}{\stackrel{\sim}{\longrightarrow}}

\newcommand{\longinjto}{\lhook\joinrel\longrightarrow}

\def\petitespace{\vphantom{\DS{\frac{\DS{A}}{\DS{A}}}}}

\def\fonction#1#2#3#4#5{\begin{array}{rccc}
{#1} : & {#2} & \longto & {#3}  \\
& {#4} & \longmapsto & {#5}
\end{array}}

\def\itemth#1{\item[${\mathrm{(#1)}}$]}

\definecolor{shadecolor}{gray}{0.90}

\def\boitegrise#1#2{\begin{centerline}{\fcolorbox{black}{shadecolor}{~
    \begin{minipage}[t]{#2}{\vphantom{~}#1\vphantom{$A_{\DS{A_A}}$}}
            \end{minipage}~}}\end{centerline}\medskip}

\makeatletter
\def\hlinewd#1{%
\noalign{\ifnum0=`}\fi\hrule \@height #1 %
\futurelet\reserved@a\@xhline} \makeatother

\newlength\epaisLigne
\newcommand\clinewd[2]{\noalign{\global\epaisLigne\arrayrulewidth\global\arrayrulewidth #1}%
\cline{#2} \noalign{\global\arrayrulewidth\epaisLigne}}

\def\sing{{\mathrm{sing}}}
\def\smooth{{\mathrm{sm}}}

\def\xyinj{\ar@{^{(}->}}

%%%%%%%%%%%%%%%%%%%%%%%%%%%%%%%%%%%%%%%%%%%%%%%%%%%%%%%%%%%%%%%%%
% Volume, dates, title, author(s), abstract, keywords, MSC class
%%%%%%%%%%%%%%%%%%%%%%%%%%%%%%%%%%%%%%%%%%%%%%%%%%%%%%%%%%%%%%%%%

\EpigaVolumeYear{5}{2021} \EpigaArticleNr{3} \ReceivedOn{June 17,
2020}
%\InFinalFormOn{}
\InFinalFormOn{November 8, 2020} \AcceptedOn{December 23, 2020}

\title{Complex reflection groups and K3 surfaces I}
\titlemark{Reflection groups and K3 surfaces}

\author{C\'edric Bonnaf\'e}
\address{IMAG, Universit\'e de Montpellier, CNRS, Montpellier, France}
\email{cedric.bonnafe@umontpellier.fr}

\author{Alessandra Sarti}
\address{Universit\'e de Poitiers,
Laboratoire de Math\'ematiques et Applications, UMR 7348 du CNRS,
TSA 61125, 11 bd Marie et Pierre Curie, 86073 Poitiers cedex 9,
France} \email{sarti@math.univ-poitiers.fr}

\authormark{C. Bonnaf\'e and A. Sarti}

\AbstractInEnglish{\footnotesize{We construct here many families of
K3 surfaces that one can obtain as quotients of algebraic surfaces
by some subgroups of the rank four complex reflection groups. We
find in total 15 families with at worst $ADE$--singularities. In
particular we classify all the K3 surfaces that can be obtained as
quotients by the derived subgroup of the previous complex reflection
groups. We prove our results by using the geometry of the weighted
projective spaces where these surfaces are embedded and the theory
of Springer and Lehrer-Springer on properties of complex reflection
groups. This construction generalizes a previous construction by W.
Barth and the second author.}}

\MSCclass{14J28; 14J10; 20F55}

\KeyWords{K3 surfaces; complex reflection groups}

\TitleInFrench{Groupes de r\'eflexions complexes et surfaces K3 I}

\AbstractInFrench{\footnotesize{Nous construisons ici de nombreuses
familles de surfaces K3 que l'on peut obtenir comme quotients de
surfaces alg\'ebriques par certains sous-groupes des groupes de
r\'eflexions complexes de rang 4. Nous trouvons au total 15 familles
avec au plus des singularit\'es $ADE$. En particulier, nous
classifions toutes les surfaces K3 pouvant \^etre obtenues comme
quotients par le sous-groupe d\'eriv\'e des groupes de r\'eflexions
complexes susmentionn\'es. Nous d\'emontrons nos r\'esultats en
utilisant la g\'eom\'etrie des espaces projectifs \`a poids dans
lesquels ces surfaces sont plong\'ees et la th\'eorie de  Springer
et Lehrer-Springer sur les propri\'et\'es des groupes de
r\'eflexions complexes. Cette construction g\'en\'eralise une
construction pr\'ec\'edente introduite par W.~Barth et la seconde
auteure.}}

%%%%%%%%%%%%%%%%%
% Thanks (if any)
%%%%%%%%%%%%%%%%%

\acknowledgement{\footnotesize{The first author is partly supported
by the ANR: Projects No ANR-16-CE40-0010-01 (GeRepMod) and
ANR-18-CE40-0024-02 (CATORE).}}

%%%%%%%%%%%%%%%%%
% Dedication, contribution
%%%%%%%%%%%%%%%%%

%\contribution{With an appendix by...}

\dedication{In memory of Gianandrea}

\begin{document}

%%%%%%%%%%%%%%%%%%%%%%%%%%%%%%%
% Title page
%%%%%%%%%%%%%%%%%%%%%%%%%%%%%%%

\removeabove{0.6cm} \removebetween{0.4cm} \removebelow{0.7cm}

\maketitle

\begin{prelims}

\DisplayAbstractInEnglish

\bigskip

\DisplayKeyWords

\medskip

\DisplayMSCclass

\bigskip

\languagesection{Fran\c{c}ais}

\bigskip

\DisplayTitleInFrench

\medskip

\DisplayAbstractInFrench

\end{prelims}

%%%%%%%%%%%%%%%%%%%%%
% Table of Contents
%%%%%%%%%%%%%%%%%%%%%

\newpage

\setcounter{tocdepth}{1}

\tableofcontents

%%%%%%%%%%%%%%%%%%%%%
% Content begins here
%%%%%%%%%%%%%%%%%%%%%

\section{Introduction}
In this paper we describe a relation between complex reflection
groups and K3 surfaces. A relation already appeared recently in the
paper \cite{bonnafe-sarti-m20} by the authors, where they use
the reflection group denoted by $G_{29}$ in the Shepard--Todd
classification \cite{shephardtodd} to describe K3 surfaces with
maximal finite automorphism groups containing the Mathieu group
$M_{20}$. The motivation for our paper is an early paper of the
second author \cite{sartipencil} and of W. Barth and the second
author \cite{barth-sarti} where they study first one parameter
families of surfaces of general type in the three dimensional
complex projective space containing four surfaces with a high number
of nodes (i.e. $A_1$--singularities). Then they study the quotients
of these families by some groups related to the platonic solids:
tetrahedron, octahedron and icosahedron and which they call
bipolyhedral groups. These turn out to be subgroups of some complex
reflection groups and they show that the quotients are K3 surfaces
with $ADE$--singularities. In this paper we show that these examples
are only a few examples of K3 surfaces that one can produce by using
complex reflection groups. Moreover the theory of Springer and
Lehrer--Springer  and some technical lemmas allow a deep
understanding of the reason why the quotients have trivial dualizing
sheaf and admit only $ADE$--singularities. This allows then to
conclude that the minimal resolution are K3 surfaces. We find in
total 15 families of K3 surfaces.

More precisely we consider a complex reflection group $W$ acting on
a four dimensional complex vector space $V$. By
Shephard--Todd/Chevalley/Serre Theorem \cite[Theorem~4.1]{broue}
there exist 4 algebraically independent polynomials which are
invariant under the action of $W$ and which generate algebraically
the ring of all $W$-invariant polynomials. We assume furthermore
that $W$ is generated by reflections of order $2$ and in Table
\ref{table:degres} we give the list of the degrees of the four
invariant polynomials (observe that these degrees do not depend on
the polynomials) and of the codegrees corresponding to the degrees
of four invariant derivatives which generate the module of all
$W$-invariant derivatives. The aim of the paper is to study the
quotient of the projective zero set $\ZC(f)$ of an homogeneous
fundamental invariant $f$ of $W$ by some subgroup $\G$ of $W$: the
derived subgroup $W'$ and the group $W^\slv=\Ker(\det) \cap W$. A
reason for this choice is that the simple structure of the invariant
ring of $W$ and the fact that $W$ is generated by reflections of
order $2$ imply that $\ZC(f)/\G$ is a complete intersection in a
weighted projective space. More precisely the quotient surface
$\ZC(f)/W^\slv$ is a double cover of a weighted projective plane
whereas if $W'$ is different from $W^\slv$ then $\ZC(f)/W'$ is a
complete intersection in a four dimensional weighted projective
space. We also explain how to obtain explicit equations for
$\ZC(f)/W'$ or $\ZC(f)/W^\slv$.

It turns out that if the degree $d$ of $f$ is ''well chosen'' and $\ZC(f)$ 
has only ADE-singularities, then
$\ZC(f)/\G$ is a K3 surface with $ADE$--singularities (``well
chosen'' means that the sum of the degrees of the equations of the
surface is equal to the sum of the weights of the ambient weighted
projective space: the set of pairs $(W,d)$ such that $d$ is
``well-chosen'' is denoted by $\KC_3$ and is described in
~\S\ref{sub:k3}).
 To show that we have $ADE$--singularities on the double covers one has to study carefully the singularities of the branching curve as well as the singularities that one gets from the singularities of the weighted projective plane.
Our main Theorem \ref{theo:main} is the following:

\begin{theorem}
Assume that $(W,d) \in \KC_3$. Let $\G$ be the subgroup $W'$ or
$W^\slv$ of $W$ and let $f$ be a fundamental invariant of $W$ of
degree $d$ whose projective zero set $\ZC(f)$ has only
$ADE$--singularities.

Then $\ZC(f)/\G$ is a K3 surface with $ADE$--singularities.
\end{theorem}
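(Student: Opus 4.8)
The plan is to reduce the statement to three properties of the normal projective surface $X=\ZC(f)/\G$ and then invoke the Enriques--Kodaira classification. Namely, it suffices to show that $X$ has only rational double points (equivalently, $ADE$-singularities), that its dualizing sheaf $\omega_X$ is trivial, and that $H^1(X,\OC_X)=0$. Granting these, the minimal resolution $\pi\colon \Xti\to X$ is crepant because $ADE$-singularities are canonical, so $\omega_{\Xti}\cong\pi^*\omega_X\cong\OC_{\Xti}$; moreover $ADE$-singularities are rational, so $H^1(\Xti,\OC_{\Xti})\cong H^1(X,\OC_X)=0$. A smooth projective surface with trivial canonical bundle and vanishing irregularity is a K3 surface (the Enriques, abelian and bielliptic alternatives are excluded by $\omega\cong\OC$ together with $q=0$), which yields the conclusion. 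The work thus splits into: realizing $X$ as a weighted complete intersection, computing $\omega_X$, and classifying the singularities of $X$.

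First I would use the description of $\ZC(f)/\G$ set up earlier in the paper. Since $\CM[V]^W$ is a polynomial ring in four generators whose degrees are the entries of Table~\ref{table:degres}, the quotient $\PM(V)/W$ is the weighted projective space with those weights, and $\ZC(f)/W$ is the weighted projective plane obtained by discarding the coordinate attached to $f$. When $\G=W^\slv$, the quotient $W/W^\slv$ is cyclic of order $2$ (every reflection of $W$ has order $2$, hence determinant $-1$, so $\det(W)=\{\pm1\}$), so $\ZC(f)/W^\slv\to\ZC(f)/W$ is a double cover and $\ZC(f)/W^\slv$ is a hypersurface $\{w^2=\varphi\}$ in a weighted $\PM^3$. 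When $W'\neq W^\slv$, the surface $\ZC(f)/W'$ is instead a codimension-two complete intersection in a four-dimensional weighted projective space. In either case $X$ is a quasi-smooth weighted complete intersection, once the singularity analysis below is under control.

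The triviality of $\omega_X$ is where the hypothesis $(W,d)\in\KC_3$ enters. For a quasi-smooth complete intersection of multidegree $(a_1,\dots,a_c)$ in a weighted projective space of weights $(m_0,\dots,m_n)$, adjunction gives $\omega_X\cong\OC_X\!\left(\textstyle\sum_i a_i-\sum_j m_j\right)$. The defining property of $\KC_3$ --- that $d$ is ``well chosen'', i.e. the sum of the degrees of the defining equations equals the sum of the weights of the ambient space --- is exactly $\sum_i a_i=\sum_j m_j$, whence $\omega_X\cong\OC_X$. The vanishing $H^1(X,\OC_X)=0$ would then follow from the standard cohomology of weighted complete intersections of this dimension.

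The main obstacle is the singularity analysis, and this is where Springer and Lehrer--Springer theory is used. Three sources of singularities must be controlled and each shown to be of $ADE$ type. The first is the singularities of the ambient weighted projective space, supported on the loci where the weights fail to be coprime; the second, in the double-cover case $\G=W^\slv$, is the singularities of the branch curve (the image in the weighted projective plane of the union of the reflection hyperplanes, i.e. of the Jacobian of $f$); the third comes from points of $\ZC(f)$ with nontrivial $\G$-stabilizer, together with the assumed $ADE$-singularities of $\ZC(f)$ itself. Lehrer--Springer theory describes the stabilizers of points under $W$ (and hence under $\G$) as reflection subgroups and pins down the eigenvalue data of the elements fixing a given point; combined with the explicit weights from Table~\ref{table:degres} and the technical lemmas giving local quotient models, this lets me identify each local singularity type and verify, case by case over the families in $\KC_3$, that only $ADE$-configurations occur. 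Once this is established, the criterion of the first paragraph applies and $\ZC(f)/\G$ is a K3 surface with $ADE$-singularities.
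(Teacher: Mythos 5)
Your overall architecture coincides with the paper's: both reduce the theorem to showing (i) $X=\ZC(f)/\G$ has only ADE singularities, (ii) a trivial canonical sheaf on (the smooth locus of) $X$, and (iii) a cohomological vanishing, and then conclude via the classification of surfaces; your singularity program (ambient singularities, branch curve, stabilizers via Springer and Lehrer--Springer, case-by-case over $\KC_3$) is also the paper's. The genuine gap is your assertion that $X$ is \emph{quasi-smooth} ``once the singularity analysis below is under control'', which you use to invoke the adjunction formula $\omega_X\cong\OC_X\bigl(\sum_i a_i-\sum_j m_j\bigr)$. This fails: ADE singularities of $X$ do not imply quasi-smoothness. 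In the double-cover case $X=\{j^2=P(x_1,x_2,x_3)\}$ the affine cone is singular wherever $j=0$ and all $\partial P/\partial x_k$ vanish, so every singular point of the branch curve lying in the smooth locus of $\PM(d_1,d_2,d_3)$ produces a punctured line of singular points of the cone; and the branch curve (the image of the reflection arrangement cut by $\ZC(f)$) does have such singular points in these families --- that is exactly how $X$ acquires most of its ADE points, via Proposition~\ref{prop:double cover}. The paper is explicit that quasi-smoothness is \emph{not} assumed, and Appendix~\ref{app:2-forme} exists precisely to fill this hole: Lemma~\ref{lem:canonique} constructs the $2$-form $l_a\,dx_b\wedge dx_c/J_{b,c}^{(a)}$ chart by chart and glues it, using only well-formedness of the ambient space and of $X$, $\dim X=2$, the ADE hypothesis, and your weight identity $\sum e_i=\sum l_j$ (hypothesis (H4)). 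So your step (ii) requires Lemma~\ref{lem:canonique} (or an equivalent non-quasi-smooth argument), not the quasi-smooth adjunction you cite.

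The same defect undermines (iii): the ``standard cohomology of weighted complete intersections'' you invoke for $H^1(X,\OC_X)=0$ is standard only in the quasi-smooth setting. The paper avoids this by replacing $q=0$ with positivity of the topological Euler characteristic, proved by equivariant topology: $\Hrm^j(\ZC(f)/\G,\CM)=\Hrm^j(\ZC(f),\CM)^\G$, with $\Hrm^1(\ZC(f),\CM)=0$ since $\ZC(f)\subset\PM^3$ has isolated singularities, and $\Hrm^3(\ZC(f),\CM)=0$ by Poincar\'e duality because ADE points are rationally smooth; trivial canonical sheaf plus positive Euler characteristic then rules out the abelian alternative for the minimal resolution. (Your route could be repaired without quasi-smoothness, e.g. $H^1(X,\OC_X)=H^1(\ZC(f),\OC_{\ZC(f)})^\G=0$ from the structure sequence $0\to\OC_{\PM^3}(-d)\to\OC_{\PM^3}\to\OC_{\ZC(f)}\to0$, but as written the step is unsupported.) Finally, note that your part (i) is a program rather than a proof: the paper's actual argument needs the symplectic treatment of $d=4$ via Nikulin, the index-two Corollary~\ref{coro:ade} when $\ZC(f)/W$ is smooth, the reduction of $G(2e,2e,4)$ to $e=1$, and the \'etale/ramification analysis of \S\ref{sub:slv}--\S\ref{sub:w'}; since you flagged this as case-by-case work in the same spirit, the decisive flaw in your write-up remains the quasi-smoothness claim.
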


In particular the theorem allows us to classify all the K3 surfaces
that can be obtained as quotient by $W'$ or $W^\slv$.
Theorem~\ref{theo:main} is a qualitative result, that insures that
one can build from invariants of some complex reflection groups of
rank $4$ several families of K3 surfaces with $ADE$--singularities.
%of the form $\ZC(f)/\G$, where $\G=W'$ or $W^\slv$.
However, it does not say anything about the types of the
singularities and important invariants of their minimal resolution
(rank of the Picard number, description of the transcendental
lattice).

Theorem~\ref{theo:main} (and its proof) improves previous works by
Barth and the second author~\cite{barth-sarti} for two reasons:
\begin{itemize}
\item[-] By looking at all complex reflection groups of rank $4$,
it enlarges considerably the class of examples of K3 surfaces that
can be constructed as above. It shows moreover that the discovery of
families of K3 surfaces in~\cite{barth-sarti} is not just ''an
accident'' but it is strictly related to polynomial invariants of
complex reflection groups and their action on these.

\item[-] The main difficulty is to prove that $\ZC(f)/\G$ has only ADE singularities.
Our proof involves some general facts about singularities (see
Appendix~\ref{app:auto-ade}) and more complex reflection group
theory (as the theory of Springer and Lehrer--Springer on
eigenspaces of elements of complex reflection groups): as a
consequence, our proof avoids as much as possible (but not
completely) a case-by-case analysis, and so may also be viewed not
only as a generalization but also as an enlightenment of results
from~\cite{barth-sarti}.
\end{itemize}
An important feature of the K3 surfaces constructed in
Theorem~\ref{theo:main} is that most of them have big Picard number,
and generally as big as possible compare to the number of moduli of
the family they belong to. In particular, we can build in this way
several (more than thirty) K3 surfaces with Picard number $20$,
often called {\it singular} K3 surfaces, whose moduli space is a
$0$-dimensional subspace of the $20$-dimensional moduli space of K3
surfaces. This will be explained in the sequel to this
paper~\cite{bonnafe-sarti-2},~\cite{bonnafe-sarti-3}, where we aim
to complete the qualitative results of this first part by
quantitative results whenever $W$ is assumed to be {\it primitive}
(i.e. $W=G_{28}$, $G_{29}$, $G_{30}$ or $G_{31}$). We will for
instance compute the transcendental lattice of the singular K3
surfaces, and describe explicit elliptic fibrations in most cases.
Note that the case where $(W,d)=(G_{28},6)$ or $(G_{30},12)$ and
$\G=W'$ was already treated by Barth and the second
author~\cite{barth-sarti}: these examples will be revisited thanks
to our new techniques, and more geometrical informations will be
given. Note also that, by taking Galois invariant models for complex
reflection groups as in~\cite{marin-michel}, it turns out that all
our families of K3 surfaces are defined over $\QM$ and, in
particular, all the singular K3 surfaces we obtain are defined over
$\QM$: this fact is only checked
case-by-case~\cite{bonnafe-sarti-2},~\cite{bonnafe-sarti-3} but
would deserve a general explanation.

The paper is organized as follows: Section \ref{sec:notation}
contains basic facts on the action of groups of matrices on
homogeneous polynomials and Section~\ref{refl} recalls facts on
reflection groups, in particular we find equations for the quotient
surfaces and we recall basic facts from Springer and Lehrer-Springer
theory, that we use in the next sections (and in the sequel to this
paper~\cite{bonnafe-sarti-2},~\cite{bonnafe-sarti-3}) to describe
the singularities that we have on the quotient surfaces. In Section
\ref{singu} we give several useful facts to describe the
singularities of the quotient surfaces in particular in the case
that these are a double cover of a weighted projective plane. Note
that Sections~\ref{sec:notation}-\ref{singu} are written in a
greater generality (reflection groups acting on vector spaces of any
finite dimension) as they might be of general interest. In Section
\ref{invK3} we describe how to obtain K3 surfaces. In Table
\ref{table:eq-k3} we recall the degrees of the equations and the
weighted projective spaces where the (singular) surfaces are
embedded.  We give in this section the main part of the proof of our
main Theorem \ref{theo:main}. We finish the proof in Section
\ref{sec:singularites} where we show that the quotient K3 surfaces
have at worst $ADE$--singularities.

\bigskip

\noindent{\bf Acknowledgements.} We wish to thank Sylvain Brochard
for useful discussions about the results of
Appendix~\ref{app:auto-ade}. We thank also Enrica Floris and
\'Etienne Mann for several interesting discussions.

\section{Notation, preliminaries}\label{sec:notation}
%%%%%%%%%%%%%%%%%%%%%%%%%%%%%%%%%%%%%%%%%%%%%%%%%%%%%%%%%%%%%

If $d \ge 1$, we denote by $\mub_d$ the group of $d$-th roots of
unity in $\CM^\times$ and we fix a primitive $d$-th root of unity
$\z_d$ (we will use the standard notation $i=\z_4$). If $l_1,\dots,
l_r$ are positive integers, then $\PM(l_1,\dots,l_r)$ denotes the
corresponding weighted projective space.

We fix an $n$-dimensional $\CM$-vector space $V$ and we denote by
$\PM(V)$ its associated projective space. If $v \in V \setminus
\{0\}$, we denote by $[v] \in \PM(V)$ the line it defines (i.e.
$[v]=\CM v$). The group $\Gb\Lb_\CM(V)$ acts on the algebra $\CM[V]$
of polynomial functions on $V$ as follows: if $g \in \Gb\Lb_\CM(V)$
and $f \in \CM[V]$, we write
$$\lexp{g}{f}(v)=f(g^{-1} \cdot v).$$
If $g \in \Gb\Lb_\CM(V)$ and $\z \in \CM^\times$, we denote by
$V(g,\z)$ the $\z$-eigenspace of $g$. If $v \in V(g,\z)$ and $f \in
\CM[V]^g$, then \equat\label{eq:derive} \lexp{g}{(\partial_v f)} =
\z \partial_v f.
\endequat
% \begin{proof}
% By definition,\todo{Cette preuve triviale est-elle necessaire ?}
% \eqna
% \lexp{g}{(\partial_v f)}(u) &=& {(\partial_v f)}(g^{-1}(u)) \\
% &=& \DS{\lim_{t \to 0} \frac{f(g^{-1}(u)+t v)-f(g^{-1}(u))}{t}} \\
% &=& \DS{\lim_{t \to 0} \frac{f(u+t g(v))-f(u)}{t}} \\
% &=& \DS{\lim_{t \to 0} \frac{f(u+\z tv)-f(u)}{t}} \\
% &=& \z (\partial_vf)(u)
% \endeqna
% for all $u \in V$ (the third equality follows from the fact that
% $f$ is $g$-invariant).
% \end{proof}

If $G$ is a subgroup of $\Gb\Lb_\CM(V)$, we write $PG$ for its image
in $\Pb\Gb\Lb_\CM(V)$. Recall that a subgroup $G$ of $\Gb\Lb_\CM(V)$
is called {\it primitive} if there does not exist a decomposition
$V=V_1 \oplus \cdots \oplus V_r$ with $V_i \neq 0$ and $r \ge 2$
such that $G$ permutes the $V_i$'s. If $S$ is a subset of $V$, we
denote by $G_S$ (resp. $G(S)$) the setwise (resp. pointwise)
stabilizer of $S$ (so that $G(S)$ is a normal subgroup of $G_S$ and
$G_S/G(S)$ acts faithfully on $S$). Note that $G_S=G_{\CM S}$ and
$G(S)=G(\CM S)$, where $\CM S$ denotes the linear span of $S$. The
derived subgroup of $G$ will be denoted by $G'$, and we set
$G^\slv=G \cap \Sb\Lb_\CM(V)$. Note that $G' \subset G^\slv$ and
that the inclusion might be strict. We state here a totally trivial
result which will be used extensively and freely along this series
of papers:

\begin{lemma}\label{lem:trivial}
Let $g \in \Gb\Lb_\CM(V)$, let $\z$ be a root of unity of order $d$,
let $v \in V$ be such that $g(v)=\z v$ and let $f \in \CM[V]^g$ be
homogeneous of degree $e$ not divisible by $d$. Then $f(v)=0$.
\end{lemma}

\begin{proof}
As $f \in \CM[V]^g$, we have $f(g(v))=f(v)$. But $f(g(v))=f(\z
v)=\z^{e} f(v)$ because $f$ is homogeneous of degree $e$. So the
result follows from the fact that $\z^e \neq 1$.
\end{proof}

If $X$ is a complex algebraic variety and $x \in X$, we denote by
$\Trm_x(X)$ the tangent space of $X$ at $x$. If $f \in \CM[V]$ is
homogeneous, we will denote by $\ZC(f)$ the projective (possibly
non-reduced) hypersurface in $\PM(V) \simeq \PM^{n-1}$ defined by
$f$. Its singular locus will be denoted by $\ZC_\sing(f)$.

\begin{lemma}\label{lem:trivial-tangent}
Let $G$ be a finite subgroup of $\Gb\Lb_\CM(V)$, let $f \in
\CM[V]^G$ be homogeneous and let $v \in V^G \setminus\{0\}$ be such
that $f(v)=0$. Then $G$ acts trivially on
$\Trm_{[v]}(\PM(V))/\Trm_{[v]}(\ZC(f))$.
\end{lemma}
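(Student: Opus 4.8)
The plan is to compute both tangent spaces explicitly inside the affine cone $V$ over $\PM(V)$, to identify the quotient in the statement with an at-most-one-dimensional $G$-module built from the differential of $f$ at $v$, and then to observe that this differential is $G$-fixed. First I would record that the hypotheses make the assertion meaningful and $G$-equivariant: since $v \in V^G$ the line $\CM v$ is $G$-stable and $[v]$ is a fixed point of the induced action of $G$ on $\PM(V)$, while $f \in \CM[V]^G$ forces $\ZC(f)$ to be $G$-stable, and $f(v)=0$ gives $[v]\in\ZC(f)$; thus $G$ acts on $\Trm_{[v]}(\PM(V))$ and preserves the subspace $\Trm_{[v]}(\ZC(f))$.

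I would then use the standard identification $\Trm_{[v]}(\PM(V))\cong V/\CM v$, which is $G$-equivariant precisely because $\CM v$ is $G$-stable. Writing $d_v f\in V^*$ for the differential of $f$ at $v$, defined by $d_vf(u)=(\partial_u f)(v)$, Euler's relation gives $d_vf(v)=(\partial_v f)(v)=(\deg f)\,f(v)=0$, so $\CM v\subseteq\Ker(d_vf)$ and $\Trm_{[v]}(\ZC(f))$ is the image of $\Ker(d_vf)$ in $V/\CM v$ (all of it when $d_vf=0$, i.e.\ when $[v]$ is singular on $\ZC(f)$). Consequently
\[
\Trm_{[v]}(\PM(V))/\Trm_{[v]}(\ZC(f))\;\cong\;V/\Ker(d_vf),
\]
and $d_vf$ induces an isomorphism of this space onto $\mathrm{im}(d_vf)\subseteq\CM$, a space of dimension $0$ or $1$.

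The crux is to check that $d_vf$ is $G$-invariant. Differentiating the identity $\lexp{g}{f}=f$ via the chain rule gives $\partial_{g\cdot u}\,f=\lexp{g}{(\partial_u f)}$ for every $u\in V$; evaluating at $v$ and using $g^{-1}\cdot v=v$ yields $d_vf(g\cdot u)=(\partial_{g\cdot u}f)(v)=(\partial_u f)(g^{-1}\cdot v)=(\partial_u f)(v)=d_vf(u)$. Hence $d_vf\colon V\to\CM$ intertwines the $G$-action on $V$ with the trivial action on $\CM$, so the isomorphism $V/\Ker(d_vf)\longiso\mathrm{im}(d_vf)$ above is $G$-equivariant with trivial target; therefore $G$ acts trivially on $V/\Ker(d_vf)$, i.e.\ on $\Trm_{[v]}(\PM(V))/\Trm_{[v]}(\ZC(f))$, as claimed.

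The computation is short, and I do not expect a genuine obstacle beyond bookkeeping: the only points requiring care are that every tangent-space identification be honestly $G$-equivariant and that the differentiation of the invariance relation respect the paper's convention $\lexp{g}{f}(w)=f(g^{-1}\cdot w)$. The underlying geometric reason—that the conormal line to a $G$-stable hypersurface at a $G$-fixed point is spanned by the $G$-fixed form $d_vf$—is exactly what makes the triviality automatic.
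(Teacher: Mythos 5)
Your proof is correct, and it takes a genuinely coordinate-free variant of the paper's route. The paper fixes a $G$-stable complement $E$ with $V = E \oplus \CM v$ (this is exactly where finiteness of $G$ enters, via semisimplicity), passes to the affine chart $v+E$, and identifies $\Trm_{[v]}(\ZC(f)) = \Ker(F_1)$, where $F_1$ is the $G$-invariant linear part of $F(e) = f(v+e)$; triviality of the quotient then follows because it is dual to the trivial module $\CM F_1$. You instead use the canonical identification $\Trm_{[v]}(\PM(V)) \cong V/\CM v$, locate the conormal direction via the single invariant form $d_vf$, and invoke Euler's relation to get $\CM v \subseteq \Ker(d_vf)$ — the same key point in both proofs (the tangent hyperplane is cut out by a $G$-invariant linear form), but with two real differences. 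First, your argument never uses finiteness of $G$: it works for an arbitrary subgroup of $\Gb\Lb_\CM(V)$ fixing $v$ and $f$, whereas the paper's choice of $E$ needs Maschke-type semisimplicity. Second, the paper's chart notation ($E$, $\a$, $F$, $F_i$) is deliberately set up for reuse — the proofs of Corollaries~\ref{coro:v-fixe-singulier} and~\ref{coro:transverse} cite it verbatim — so the chart-based proof buys downstream bookkeeping that your intrinsic version would have to re-derive (via $F_1 = d_vf|_E$ under $E \cong V/\CM v$). One small wording point: the equivariance of $\Trm_{[v]}(\PM(V)) \cong V/\CM v$ does not follow from the line $\CM v$ merely being $G$-stable, since in general $\Trm_{[v]}(\PM(V)) \cong \mathrm{Hom}(\CM v, V/\CM v)$ carries a twist by the character of $G$ on $\CM v$; here $v \in V^G$ makes that character trivial, so your identification is indeed equivariant, but the justification should invoke $v \in V^G$ rather than stability of the line.
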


\begin{proof}
Since $G$ is finite, there exists a $G$-stable subspace $E$ of $V$
such that $V=E \oplus \CM v$.
Let $\a \in V^*$ be such that $\a(v)=1$ and $\a(E)=0$. The affine
chart $U_\a$ of $\PM(V)$ defined by $\a \neq 0$ is identified with
$v+E$ and, after translation, is identified with $E$: through this
identification, $\ZC(f) \cap U_\a$ is the affine hypersurface
defined by the polynomial $F \in \CM[E]$, where $F(e)= f(v+e)$.
Since $v$ is $G$-invariant, $F$ is also $G$-invariant. Let us denote
by $F_i$ its $i$-th homogeneous component: it is $G$-invariant. Then
$F_0=0$ because $f(v)=0$. Moreover, $\Trm_{[v]}(\PM(V)) = E$ and
$\Trm_{[v]}(\ZC(f))=\Ker(F_1)$ (and these identifications are
$G$-equivariant since $v \in V^G$).

But $\CM F_1$ is the dual space to $E/\Ker(F_1)$: since $G$ acts
trivially on $\CM F_1$, this shows that $G$ acts trivially on
$E/\Ker(F_1) = \Trm_{[v]}(\PM(V))/\Trm_{[v]}(\ZC(f))$.
\end{proof}

The next result is just an easy generalization
of~\cite[\S{6}]{sartipencil}.

\begin{cor}\label{coro:v-fixe-singulier}
Let $G$ be a finite subgroup of $\Gb\Lb_\CM(V)$ such that $\dim V^G
= 1$ and let $f \in \CM[V]^G$ be non-zero and homogeneous. We assume
that $f$ vanishes at $V^G$, viewed as a point of $\PM(V)$. Then
$V^G$ is a singular point of $\ZC(f)$.
\end{cor}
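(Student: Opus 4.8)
The plan is to deduce this directly from Lemma~\ref{lem:trivial-tangent}. Write $v$ for a generator of the line $V^G$, so that the hypothesis $\dim V^G = 1$ reads $V^G = \CM v$, and the assumption that $f$ vanishes at $V^G$ means precisely $f(v) = 0$; in particular $[v] \in \ZC(f)$. Recall that $[v]$ is a singular point of the hypersurface $\ZC(f)$ exactly when the Zariski tangent space $\Trm_{[v]}(\ZC(f))$ fills up all of $\Trm_{[v]}(\PM(V))$, equivalently when the quotient $\Trm_{[v]}(\PM(V))/\Trm_{[v]}(\ZC(f))$ vanishes. So I would argue by contradiction: assuming $[v]$ is a \emph{smooth} point of $\ZC(f)$, this quotient is one-dimensional, and I aim to derive a contradiction with $\dim V^G = 1$.

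To exploit the hypothesis I would reuse the concrete description from the proof of Lemma~\ref{lem:trivial-tangent}. Fix a $G$-stable complement $E$ with $V = E \oplus \CM v$; then $\Trm_{[v]}(\PM(V))$ is identified $G$-equivariantly with $E$, and $\Trm_{[v]}(\ZC(f))$ with $\Ker(F_1)$, where $F_1$ is the linear part of the polynomial $F(e) = f(v+e)$ on $E$. Smoothness of $[v]$ amounts to $F_1 \neq 0$. Now the key point is that $E$ carries \emph{no} nonzero $G$-fixed vector: since $E^G \subseteq V^G = \CM v$ and $E \cap \CM v = 0$, we get $E^G = 0$. As $G$ is finite, $\dim (E^*)^G = \dim E^G = 0$, so the dual representation $E^*$ also has no nonzero $G$-invariant. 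On the other hand, Lemma~\ref{lem:trivial-tangent} tells us that $G$ acts trivially on $E/\Ker(F_1)$, hence on its dual line $\CM F_1 \subseteq E^*$; thus $F_1$ would be a nonzero $G$-invariant element of $E^*$. This contradicts $(E^*)^G = 0$, forcing $F_1 = 0$ and proving that $[v]$ is singular.

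The computation is short, so there is no serious obstacle; the only genuine step is the passage from $E^G = 0$ to $(E^*)^G = 0$, which rests on the equality $\dim E^G = \dim (E^*)^G$ valid for any finite group (the trivial constituent of a representation and of its dual occur with the same multiplicity, by complete reducibility). Everything else is bookkeeping inherited from Lemma~\ref{lem:trivial-tangent}, once one observes that the hypothesis $\dim V^G = 1$ is exactly what makes the normal line $\CM F_1$ an impossible $G$-invariant in the smooth case.
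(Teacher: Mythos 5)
Your proof is correct and follows essentially the same route as the paper: both rest on Lemma~\ref{lem:trivial-tangent} with the decomposition $V = E \oplus \CM v$, the observation $E^G = 0$, and complete reducibility for the finite group $G$. The only cosmetic difference is that you argue by contradiction through the dual invariant $(E^*)^G = 0$ (noting $\CM F_1$ would be a nonzero invariant line), whereas the paper concludes directly that the trivially-acted-on quotient $E/\Ker(F_1)$ must vanish since $(E/\Ker(F_1))^G = 0$ --- the two formulations are equivalent.
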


\begin{proof}
Let $v \in V^G \setminus \{0\}$. We keep the notation of the proof
of the previous Lemma~\ref{lem:trivial-tangent} ($E$, $\a$, $F$,
$F_i$). Since $V^G=\CM v$, we have $E^G=0$ and so, by
semisimplicity, we have that $(E/\Ker(F_1))^G=0$. But $G$ acts
trivially on $E/\Ker(F_1)$ by Lemma~\ref{lem:trivial-tangent}.
Therefore, $E/\Ker(F_1)=0$ so $\Trm_{[v]}(\ZC(f))=\Ker(F_1)=E$ and
so $\ZC(f)$ is singular at $[v]=V^G$.
\end{proof}

\begin{remark}\label{rem:maximaux-singuliers}
The previous lemma might be used to explain the construction of
several singular curves and surfaces constructed by the two
authors~\cite{sartipencil},~\cite{bonnafesingular}. Let us explain
how to proceed.

Let $G$ be a finite subgroup of $\Gb\Lb_\CM(V)$, and let
$H_1,\dots, H_r$ be a set of representatives of conjugacy classes
of maximal subgroups of $G$ among the subgroups $H$ satisfying
$\dim(V^H)=1$. Let $N_k=N_G(H_k)$, let $v_k \in V^{H_k} \setminus
\{0\}$ and let $\O_k$ denote the $G$-orbit of $[v_k]$ in $\PM(V)$.
Then \equat\label{eq:g-orbite} |\O_k|=\frac{|G|}{|N_k|}.
\endequat
For this, it is sufficient to prove that $N_k=G_{[v_k]}$. First,
$N_k$ stabilizes $V^{H_k}=[v_k]$, which proves that $N_k \subset
G_{[v_k]}$. Conversely, $G_{[v_k]}$ normalizes $G_{v_k}$. But $H_k
\subset G_{v_k}$ by construction and, by the maximality of $H_k$,
this implies that $H_k=G_{v_k}$.

Now, we fix two linearly independent homogeneous polynomials $f_1$
and $f_2$ of the same degree such that $f_1(v_k) \neq 0$ for all
$k$. We also set $\lambda_k=f_2(v_k)/f_1(v_k)$. Then it follows from
Corollary~\ref{coro:v-fixe-singulier} that
\equat\label{eq:singulier-maximal} \text{\it $\ZC(f_2-\lambda_k
f_1)$ contains $\O_k$ in its singular locus.}
\endequat
It also shows that, if $G$ is defined over a subfield $K$ of $\CM$,
then the points of $\O_k$ (which are singular points of
$\ZC(f_2-\lambda_k f_1)$) have coordinates in $K$.
\end{remark}

\begin{cor}\label{coro:transverse}
Let $G$ be a finite subgroup of $\Gb\Lb_\CM(V)$ such that $\dim V^G
= 2$, let $f \in \CM[V]^G$ be homogeneous and non-zero and let $v
\in V^G \setminus\{0\}$ be such that $f(v)=0$. Let $L$ be the line
$\PM(V^G)$ and assume that $[v]$ is a smooth point of $\ZC(f)$. Then
the intersection of $L$ with $\ZC(f)$ is transverse at $[v]$.
\end{cor}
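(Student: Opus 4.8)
The plan is to reuse the local picture set up in the proof of Lemma~\ref{lem:trivial-tangent}. Since $G$ is finite, I fix a $G$-stable decomposition $V = E \oplus \CM v$, a form $\a \in V^*$ with $\a(v)=1$ and $\a(E)=0$, and work in the affine chart $U_\a$, identified after translation with $E$ and centered at $[v]$. There $\ZC(f)$ is cut out by the $G$-invariant polynomial $F(e)=f(v+e)$, whose homogeneous components $F_i$ are $G$-invariant, with $F_0=0$ (as $f(v)=0$). Transversality of $L$ and $\ZC(f)$ at the smooth point $[v]$ amounts to the condition $\Trm_{[v]}(L) \cap \Trm_{[v]}(\ZC(f)) = 0$ inside $\Trm_{[v]}(\PM(V)) = E$; since $[v]$ is smooth we have $F_1 \neq 0$ and $\Trm_{[v]}(\ZC(f)) = \Ker(F_1)$ is a hyperplane of $E$, so it suffices to produce a generator of $\Trm_{[v]}(L)$ on which $F_1$ does not vanish.

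First I would identify $\Trm_{[v]}(L)$. Because $v$ is $G$-fixed, $V^G = \CM v \oplus E^G$, and the hypothesis $\dim V^G = 2$ forces $\dim E^G = 1$; write $E^G = \CM w$. In the chart $U_\a$ the line $L = \PM(V^G)$ is the image of $\{v + s w : s \in \CM\}$, so after the translation identifying $U_\a$ with $E$ its tangent direction at $[v]$ is exactly $\CM w = E^G$. Thus transversality reduces to the single inequality $F_1(w) \neq 0$.

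The key step --- and the only place where the group plays a role --- is to prove $F_1(w)\neq 0$, and I would use semisimplicity of the finite group action. The linear form $F_1$ lies in $(E^*)^G$; decomposing $E = E^G \oplus E'$ into its trivial and non-trivial isotypic parts, a non-zero $G$-invariant form must vanish on $E'$ (the dual of a non-trivial irreducible has no non-zero $G$-invariants) and restrict to a non-zero form on $E^G$. Equivalently, one may invoke Lemma~\ref{lem:trivial-tangent}: $G$ acts trivially on the line $E/\Ker(F_1)$, the image of $E'$ in this trivial quotient is $0$, so the projection restricts to an isomorphism $E^G \longiso E/\Ker(F_1)$, whence $E^G \cap \Ker(F_1) = 0$. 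Either way $w \notin \Ker(F_1)$, i.e.\ $F_1(w) \neq 0$, which is precisely the desired transversality.

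The main obstacle is conceptual rather than computational: one has to recognize that the invariance of $F_1$ under the semisimple $G$-action pins it down to the trivial isotypic component, forcing it to be non-degenerate on the one-dimensional $E^G$. Once the correct generator $w$ of the tangent line $\Trm_{[v]}(L)$ is identified with a basis of $E^G$, the representation-theoretic input closes the argument with no case analysis.
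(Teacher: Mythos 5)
Your proof is correct and follows essentially the same route as the paper's: working in the affine chart of Lemma~\ref{lem:trivial-tangent} with the same $E$, $\a$, $F$, $F_1$, noting $\dim E^G=1$ and $F_1\neq 0$ by smoothness, and using the triviality of the $G$-action on $E/\Ker(F_1)$ (equivalently, semisimplicity) to conclude $E=E^G\oplus\Ker(F_1)$ with $E^G=\Trm_{[v]}(L)$ and $\Ker(F_1)=\Trm_{[v]}(\ZC(f))$. The only difference is that you spell out the isotypic-decomposition step that the paper compresses into a single appeal to Lemma~\ref{lem:trivial-tangent}.
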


\begin{proof}
We keep again the notation of the proof of
Lemma~\ref{lem:trivial-tangent} ($E$, $\a$, $F$, $F_i$). Since $\dim
V^G=2$, this forces $\dim E^G = 1$. Since $[v]$ is smooth, this
means that $F_1 \neq 0$. It then follows from
Lemma~\ref{lem:trivial-tangent} that $E=E^G \oplus \Ker(F_1)$. But
$E^G=\Trm_{[v]}(L)$ and $\Ker(F_1)=\Trm_{[v]}(\ZC(f))$. This shows
the result.
%
%
% Since $G$ is finite, there exists a (unique) $G$-stable
% subspace $E_G$ of $V$ such that $V=E_G \oplus V^G$.
% Let $v' \in V^G$ be such that $(v,v')$ is a basis
% of $V^G$. Then $V^*=(V^*)^G \oplus E_G^*$
% and we denote by $(\a,\a')$ the basis of $(V^*)^G$
% which is dual to $(v,v')$. We set $E=\CM v' \oplus E_G$.
% As in the proof of the previous lemma~\ref{coro:v-fixe-singulier},
% we may identify the affine chart of $\PM(V)$ defined by $\a \neq 0$
% with $E$ and we set $F(e)=f(v+e)$ for $e \in E$.
%
% Since $[v]$ is a smooth point of $\ZC(f)$, this proves that
% the linear part of $F$ is non-zero: moreover, it is $G$-invariant
% and belongs to $\CM \a' \oplus E^*$. But $(E^*)^G=0$,
% so there exists a non-zero scalar $\lambda$ such that the linear
% part of $F$ is equal $\lambda \a'|_E$. So, inside $E$,
% $$\Trm_{[v]}(\ZC(f))=\Ker(\a'|_E) \qquad\text{and}\qquad
% \Trm_{[v]}(L)=\CM v',$$
% so $\Trm_{[v]}(\ZC(f))+ \Trm_{[v]}(L)=E$, as desired.
\end{proof}

\begin{cor}\label{coro:droite contenue}
Let $G$ be a finite subgroup of $\Gb\Lb_\CM(V)$ such that $\dim V^G
= 2$, and let $f \in \CM[V]^G$ be homogeneous and non-zero. Let $L$
be the line $\PM(V^G)$ and assume that $L \subset \ZC(f)$. Then $L
\subset \ZC_\sing(f)$.
\end{cor}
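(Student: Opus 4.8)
The plan is to argue by contradiction, reducing to the immediately preceding Corollary~\ref{coro:transverse}, which already encodes the relevant local geometry. Fix an arbitrary point $[v]\in L$, with $v\in V^G\setminus\{0\}$. Since $L\subset\ZC(f)$ we have in particular $f(v)=0$, so all the hypotheses of Corollary~\ref{coro:transverse} are satisfied as soon as we assume, for contradiction, that $[v]$ is a smooth point of $\ZC(f)$. The aim is to show that this smoothness assumption is incompatible with the containment $L\subset\ZC(f)$; this forces $[v]\in\ZC_\sing(f)$, and since $[v]$ is arbitrary it yields $L\subset\ZC_\sing(f)$.

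The key step is to read Corollary~\ref{coro:transverse} correctly: transversality of $L$ and $\ZC(f)$ at $[v]$ means precisely that $\Trm_{[v]}(L)$ and $\Trm_{[v]}(\ZC(f))$ are in direct sum inside $\Trm_{[v]}(\PM(V))$ (in the notation of its proof, $E=E^G\oplus\Ker(F_1)$ with $E^G=\Trm_{[v]}(L)$ and $\Ker(F_1)=\Trm_{[v]}(\ZC(f))$), so that $\Trm_{[v]}(L)\cap\Trm_{[v]}(\ZC(f))=0$. On the other hand, the containment $L\subset\ZC(f)$ gives the opposite inclusion $\Trm_{[v]}(L)\subset\Trm_{[v]}(\ZC(f))$. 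Combining the two, $\Trm_{[v]}(L)=\Trm_{[v]}(L)\cap\Trm_{[v]}(\ZC(f))=0$, which contradicts the fact that $\Trm_{[v]}(L)$ is one-dimensional, $L$ being a line through $[v]$.

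Everything is routine once the interpretation is fixed, and the only delicate point --- the (mild) main obstacle --- is exactly this reading of ``transverse'' as the direct-sum condition $\Trm_{[v]}(L)\oplus\Trm_{[v]}(\ZC(f))=\Trm_{[v]}(\PM(V))$, the logical negation of $L$ being tangent to, a fortiori contained in, $\ZC(f)$ at $[v]$. As an alternative not relying on Corollary~\ref{coro:transverse}, I would argue directly with $E$, $F$ and $F_1$ as in the proof of Lemma~\ref{lem:trivial-tangent}: the containment $L\subset\ZC(f)$ forces $F$, hence its linear part $F_1$, to vanish on the subspace $E^G\subset E$ that corresponds to $L$ in the affine chart; but $F_1$ is a $G$-invariant linear form, so by semisimplicity it also vanishes on the sum $E'$ of the nontrivial isotypic components of $E$, and since $E=E^G\oplus E'$ we conclude $F_1=0$, i.e. $[v]\in\ZC_\sing(f)$.
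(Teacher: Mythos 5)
Your proof is correct and takes essentially the same approach as the paper: at an arbitrary point $[v]\in L$ assumed smooth, Corollary~\ref{coro:transverse} gives transversality of $L$ and $\ZC(f)$ at $[v]$, which $L\subset\ZC(f)$ contradicts --- the paper states this in one line, leaving implicit the direct-sum reading $\Trm_{[v]}(L)\cap\Trm_{[v]}(\ZC(f))=0$ that you correctly spell out. Your alternative argument via $E$, $F_1$ and semisimplicity is also valid, but it simply re-runs the ingredients of the proof of Corollary~\ref{coro:transverse} rather than being a genuinely different route.
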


\begin{proof}
Let $v \in V^G \setminus \{0\}$ and assume that $[v]$ is a smooth
point of $\ZC(f)$. Then the intersection of $L$ with $\ZC(f)$ is not
transverse at $[v]$ because $L \subset \ZC(f)$: this contradicts
Corollary~\ref{coro:transverse}.
\end{proof}

%%%%%%%%%%%%%%%%%%%%%%%%%%%%%%%%%%%%%%%%%%%%%%
\section{Reflection groups}\label{refl}
%%%%%%%%%%%%%%%%%%%%%%%%%%%%%%%%%%%%%%%%%%%%%

We fix a finite subgroup $W$ of $\Gb\Lb_\CM(V)$ and we set
$$\Refle(W)=\{s \in W~|~\dim(V^s)=n-1\}.$$

\smallskip

\boitegrise{{\bf Hypothesis.} {\it We assume throughout this paper
that
$$W=\langle \Refle(W) \rangle.$$
In other words, $W$ is a {\bfit complex reflection group}.
% We also assume that $W$ acts {\bfit irreducibly} on $V$.
The number $\codim(V^W)$ is called the {\bfit rank} of
$W$.}}{0.8\textwidth}

\smallskip

% The assumption that $W$ acts irreducibly on $V$ is not necessary
% for all the results but it makes most of the statements smoother
% and the general case can easily be reduced to this one by standard
% arguments.

Standard arguments allow to reduce most questions about reflection
groups to questions about {\it irreducible} reflection groups. These
last ones have been classified by Shephard-Todd and we refer to
Shephard-Todd numbering~\cite{shephardtodd} for such groups: there
is an infinite family $G(de,e,r)$ with $d$, $e$, $r \ge 0$ (they are
of rank $r$ if $(d,e) \neq (1,1)$ and of rank $r-1$ otherwise) and
$34$ exceptional ones numbered from $G_4$ to $G_{37}$ (they are
exactly the primitive complex reflection groups). If $W$ can be
realized over the field of real numbers, then it is a Coxeter group
and we will also use the notation $\Wrm(X_i)$ where $X_i$ is the
type of some Coxeter graph. For instance, the group $G_{30}$ in
Shephard-Todd numbering is the Coxeter group $\Wrm(H_4)$.

\subsection{Invariants}
By Shephard-Todd/Chevalley/Serre Theorem~\cite[Theorem~4.1]{broue},
there exist $n$ algebraically independent homogeneous elements
$f_1$, $f_2,\dots, f_n$ of $\CM[V]^W$ such that
$$\CM[V]^W=\CM[f_1,f_2,\dots,f_n].$$
Let $d_i=\deg(f_i)$. % We will assume that $d_1 \le d_2 \le \cdots \le d_n$.
A family $(f_1,f_2,\dots,f_n)$ satisfying the above property is
called a {\it family of fundamental invariants} of $W$. Observe that
by a result of Marin-Michel~\cite{marin-michel}, these polynomials
can be defined over the rational numbers (for more details,
see~\cite{bonnafe-sarti-2}). A homogeneous element $f \in \CM[V]^W$
is called a {\it fundamental invariant} if it belongs to a family of
fundamental invariants. Whereas such a family is not uniquely
defined even up to permutation, the list $(d_1,d_2,\dots,d_n)$ is
well-defined up to permutation and is called the list of {\it
degrees} of $W$: it will be denoted by $\Deg(W)$.

\bigskip

\boitegrise{{\bf Notation.} {\it From now on, and until the end of
this paper, we fix a family $\fb=(f_1,f_2,\dots,f_n)$ of fundamental
invariants and we set $d_i=\deg(f_i)$.}}{0.8\textwidth}

\bigskip

The following equalities are well-known~\cite[Theorem~4.1]{broue}:
\equat\label{eq:degres} |W|=d_1d_2\cdots d_n \qquad\text{and}\qquad
|\Refle(W)|=\sum_{i=1}^n(d_i-1).
\endequat
Also, as $W$ acts irreducibly on $V$, its center $|\Zrm(W)|$
consists of homotheties, so it is cyclic.
Moreover by \cite[Proposition~4.6]{broue}, \equat\label{eq:centre}
|\Zrm(W)|={\mathrm{Gcd}}(d_1,d_2,\dots,d_n).
\endequat
The $\CM[V]$-module $\Der(\CM[V])$ of derivatives of the algebra
$\CM[V]$ is naturally graded in such a way that $\partial_v$ has
degree $-1$ for all $v \in V$. By Solomon
Theorem~\cite[Theorem~4.44~and~\S{4.5.4}]{broue}, the graded
$\CM[V]^W$-module $\Der(\CM[V])^W$ of invariant derivatives is free
of rank $n$, hence it admits a homogeneous $\CM[V]^W$-basis
$(D_1,\dots,D_n)$ whose respective degrees are denoted by
$d_1^*\dots, d_n^*$. Again, the family $(D_1,\dots,D_n)$ is not
uniquely defined even up to permutation, but the list
$(d_1^*,d_2^*,\dots,d_n^*)$ is well-defined up to permutation and is
called the list of {\it codegrees} of $W$: it will be denoted by
$\Codeg(W)$.

We conclude this subsection by a general easy result which follows
immediately from the fact that $\CM[V]^W$ is a graded polynomial
algebra whose weights are given by $\Deg(W)$.

\begin{pro}\label{prop:projectif-a-poids}
The map
$$\fonction{\pi_\fb}{\PM(V)}{\PM(d_1,d_2,\dots,d_n)}{[v]}{[f_1(v) :
f_2(v) : \cdots : f_n(v)]}$$ is well-defined and induces an
isomorphism
$$\PM(V)/W \longiso \PM(d_1,d_2,\dots,d_n).$$
Moreover, $\pi_\fb$ induces by restriction an isomorphism
$$\ZC(f_1)/W \longiso \PM(d_2,\dots,d_n).$$
\end{pro}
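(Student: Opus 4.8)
The plan is to realize both sides as $\mathrm{Proj}$ of graded rings and to reduce the statement to an isomorphism of graded $\CM$-algebras, letting the Chevalley property of $\CM[V]^W$ do the work. First I would check that $\pi_\fb$ is well defined. Recall that $\PM(d_1,\dots,d_n)$ is the quotient of $\CM^n \setminus \{0\}$ by the $\CM^\times$-action $\lambda \cdot (x_1,\dots,x_n) = (\lambda^{d_1} x_1,\dots,\lambda^{d_n} x_n)$. Since $f_i$ is homogeneous of degree $d_i$, one has $f_i(\lambda v) = \lambda^{d_i} f_i(v)$, so the tuple $(f_1(v),\dots,f_n(v))$ transforms under scaling of $v$ exactly by this weighted action; hence the class $[f_1(v):\dots:f_n(v)]$ depends only on $[v]$. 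For it to land in the weighted projective space I need $(f_1(v),\dots,f_n(v)) \neq 0$ whenever $v \neq 0$: this holds because $\CM[V]$ is a finitely generated, hence integral, module over $\CM[V]^W = \CM[f_1,\dots,f_n]$, so the common zero locus of the $f_i$ in $V$ is the fibre over $0$ of the finite morphism $V \to V/W$, namely $\{0\}$. Finally each $f_i$ is $W$-invariant, so $f_i(w \cdot v) = f_i(v)$ for $w \in W$ and $\pi_\fb$ factors through $\PM(V)/W$.

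Next I would prove the isomorphism at the level of homogeneous coordinate rings. By the Shephard--Todd/Chevalley/Serre Theorem the $f_i$ are algebraically independent and homogeneous with $\deg f_i = d_i$, so $y_i \mapsto f_i$ defines an isomorphism of graded $\CM$-algebras
$$\CM[y_1,\dots,y_n] \longiso \CM[V]^W, \qquad \deg y_i = d_i.$$
Applying $\mathrm{Proj}$ gives $\mathrm{Proj}\,\CM[V]^W \cong \PM(d_1,\dots,d_n)$. On the other hand, the quotient of $\PM(V) = \mathrm{Proj}\,\CM[V]$ by the finite group $W$ is $\mathrm{Proj}\,\CM[V]^W$: the inclusion $\CM[V]^W \hookrightarrow \CM[V]$ is finite and realizes the geometric quotient $V \setminus \{0\} \to (V \setminus \{0\})/W$ on the affine cones (the fibres of $V \to V/W$ are exactly the $W$-orbits), and this descends along the compatible $\CM^\times$-scalings to the geometric quotient $\PM(V)/W$. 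Composing the two identifications yields an isomorphism $\PM(V)/W \longiso \PM(d_1,\dots,d_n)$, and by construction it is the map induced by $[v] \mapsto [f_1(v):\dots:f_n(v)]$, i.e.\ the map $\bar\pi_\fb$ deduced from $\pi_\fb$.

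For the second assertion I would restrict the isomorphism above to the relevant closed subsets. Because a point of $\PM(V)$ lies in $\ZC(f_1)$ precisely when its first coordinate $f_1(v)$ vanishes, $\pi_\fb^{-1}(\{y_1 = 0\}) = \ZC(f_1)$; hence $\bar\pi_\fb$ maps $\ZC(f_1)/W$ isomorphically onto the closed subscheme $\{y_1 = 0\}$ of $\PM(d_1,\dots,d_n)$. The latter is $\mathrm{Proj}\,\bigl(\CM[y_1,\dots,y_n]/(y_1)\bigr) = \mathrm{Proj}\,\CM[y_2,\dots,y_n] = \PM(d_2,\dots,d_n)$, which gives the claimed isomorphism $\ZC(f_1)/W \longiso \PM(d_2,\dots,d_n)$.

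The only genuinely delicate point is the identification $\PM(V)/W = \mathrm{Proj}\,\CM[V]^W$: one must ensure that the induced morphism is an isomorphism of varieties and not merely a bijection (a naive point-count would not suffice, as bijective morphisms of varieties over $\CM$ need not be isomorphisms). This is exactly why I phrase the argument through the graded ring isomorphism coming from Chevalley's theorem rather than through a fibre-by-fibre analysis of $\bar\pi_\fb$ — the explicit structure of $\CM[V]^W$ as a graded polynomial ring with weights $\Deg(W)$ makes the $\mathrm{Proj}$-level statement immediate.
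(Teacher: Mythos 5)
Your proof is correct and takes essentially the same approach as the paper: the paper gives no detailed argument, stating only that the proposition ``follows immediately from the fact that $\CM[V]^W$ is a graded polynomial algebra whose weights are given by $\Deg(W)$,'' and your argument via the graded ring isomorphism $\CM[y_1,\dots,y_n]\longiso\CM[V]^W$ and the identification $\PM(V)/W=\mathrm{Proj}\,\CM[V]^W$ is precisely the careful expansion of that remark. Your attention to the fact that the identification must be an isomorphism of varieties rather than a mere bijection is exactly the point that makes the paper's ``immediate'' claim rigorous.
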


\subsection{Reflecting hyperplanes}\label{sub:reflecting}
If $s \in \Refle(W)$, then the hyperplane $V^s$ is called the
reflecting hyperplane of $s$ (or a reflecting hyperplane of $W$). We
denote by $\AC$ the set of {\it reflecting hyperplanes} of $W$. If
$X$ is a subset of $V$, then, by Steinberg-Serre
Theorem~\cite[Theorem~4.7]{broue}, $W(X)$ is generated by
reflections and so is generated by the reflections whose reflecting
hyperplane contains $X$: such a subgroup is called a {\it parabolic
subgroup} of $W$.

If $H \in \AC$, then the group $W(H)$ is cyclic (indeed, by
semisimplicity, it acts faithfully on $V/H$ which has dimension $1$)
and we denote its order by $e_H$. Note that $W_H \setminus \{1\}$ is
the set of reflections of $W$ whose reflecting hyperplane is $H$, so
\equat\label{eq:ref-hyp} |\Refle(W)|=\sum_{H \in \AC} (e_H-1).
\endequat
We denote by $\a_H$ an element of $V^*$ such that $H=\Ker(\a_H)$. In
particular, if all the reflections have order $2$, then
$|\Refle(W)|=|\AC|$.
% and we denote by $s_H$ a generator: it is a reflection, whose order is
% denoted by $e_H$. We also
% denote by $\a_H$ (resp $\a_H^\vee$) an element of $V^*$ (resp. $V$)
% whose orthogonal is $H$ (resp. $V^{*s_H}$). Then
% $$s_H(\a_H^\vee)=\det(s_H) \a_H^\vee\qquad\text{and}\qquad
% s_H(\a_H)=\det(s_H)^{-1} \a_H.$$
Finally, note the following equality~\cite[Remark~4.48]{broue}
\equat\label{eq:codeg} |\AC|=\sum_{i=1}^n (d_i^*+1).
\endequat
If $\O$ is a $W$-orbit in $\AC$, then we denote by $e_\O$ the common
value of the $e_H$'s for $H \in \O$. We then set
$$J_\O = \prod_{H \in \O} \a_H.$$
Then there exists a unique polynomial $P_{\fb,\O}$ in variables
$x_1,\dots, x_n$ of respective weights $d_1,\dots, d_n$ such
that \equat\label{eq:j-omega} J_\O^{e_\O} =
P_{\fb,\O}(f_1,\dots,f_n).
\endequat
Note that $P_{\fb,\O}$ is homogeneous of degree $e_\O |\O|$. Then
(see~\cite[Theorem~2.3~and~Corollary~4.3]{stanley}
or~\cite[Theorem~9.19~and~Corollary~9.21]{lehrertaylor})
\equat\label{eq:inv-derive} \CM[V]^{W'}=\CM[f_1,\dots,f_n,(J_\O)_{\O
\in \AC/W}]
\endequat
and a presentation of $\CM[V]^{W'}$ is given by the
relations~(\ref{eq:j-omega}). Consequently:

\begin{pro}\label{prop:zf-dw}
Let $\O_1,\dots, \O_r$ denote the $W$-orbits in $\AC$. Then the
map
$$\fonction{\pi_\fb'}{\PM(V)}{
\PM(d_1,d_2,\dots,d_n,|\O_1|,\dots,|\O_r|)}{[v]}{ [f_1(v) : f_2(v) :
\cdots : f_n(v) : J_{\O_1}(v) : \cdots : J_{\O_r}(v)]}$$ is
well-defined and induces an isomorphism
$$\PM(V)/W' \longiso \{[x_1:\cdots:x_n:j_1:\cdots:j_r] \in
\PM(d_1,\dots,d_n,|\O_1|,\dots,|\O_r|)~|~\hphantom{AAA}$$
$$\hphantom{AAAAAA}
\forall~1\le k \le r,~ j_k^{e_{\O_k}} =
P_{\fb,\O_k}(x_1,\dots,x_n)\}.$$ Moreover, $\pi_\fb'$ induces by
restriction an isomorphism
$$\ZC(f_1)/W' \longiso
\{[x_2:\cdots:x_n:j_1:\dots:j_r] \in
\PM(d_2,\dots,d_n,|\O_1|,\dots,|\O_r|)~|~\hphantom{AAA}$$
$$\hphantom{AAAAAA}
\forall~1 \le k \le r,~j_k^{e_{\O_k}} =
P_{\fb,\O_k}(0,x_2,\dots,x_n)\}.$$
\end{pro}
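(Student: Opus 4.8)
The plan is to argue exactly as for Proposition~\ref{prop:projectif-a-poids}: the whole statement is a formal consequence of the explicit graded presentation of $\CM[V]^{W'}$ recorded in~(\ref{eq:inv-derive}) and~(\ref{eq:j-omega}), combined with the standard fact that, for a finite group $G$ acting linearly on $V$, the projective quotient $\PM(V)/G$ is computed by $\operatorname{Proj}(\CM[V]^G)$.

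First I would check that $\pi_\fb'$ is a well-defined morphism on all of $\PM(V)$. Each $f_i$ is homogeneous of degree $d_i$, and since $J_{\O_k}=\prod_{H\in\O_k}\a_H$ is a product of $|\O_k|$ linear forms it is homogeneous of degree $|\O_k|$; thus rescaling $v\mapsto\lambda v$ multiplies the coordinates by $\lambda^{d_1},\dots,\lambda^{d_n},\lambda^{|\O_1|},\dots,\lambda^{|\O_r|}$, which is precisely the identification defining $\PM(d_1,\dots,d_n,|\O_1|,\dots,|\O_r|)$. To see that the coordinates do not all vanish for $v\neq 0$, I use that $\CM[V]$ is a finite module over $\CM[f_1,\dots,f_n]$ (because $W$ is finite), so the common zero locus of $f_1,\dots,f_n$ in $V$ is $\{0\}$; hence some $f_i(v)\neq 0$. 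Since the $f_i$ and the $J_{\O_k}$ lie in $\CM[V]^{W'}$, the map factors through $\PM(V)/W'$, and by the pointwise identities~(\ref{eq:j-omega}) its image lies in the closed subvariety cut out by $j_k^{e_{\O_k}}=P_{\fb,\O_k}(x_1,\dots,x_n)$.

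The core step is the identification of rings. Writing $S=\CM[x_1,\dots,x_n,j_1,\dots,j_r]$ with the weights $\deg x_i=d_i$, $\deg j_k=|\O_k|$, and $I=(j_k^{e_{\O_k}}-P_{\fb,\O_k}(x_1,\dots,x_n))_{1\le k\le r}$, the target subvariety is by definition the closed subscheme $\operatorname{Proj}(S/I)\subset\PM(d_1,\dots,d_n,|\O_1|,\dots,|\O_r|)$. By~(\ref{eq:inv-derive}) the graded morphism $S\to\CM[V]^{W'}$ sending $x_i\mapsto f_i$, $j_k\mapsto J_{\O_k}$ is surjective, and by the quoted presentation (see~\cite{stanley},~\cite{lehrertaylor}) its kernel is exactly $I$; hence $S/I\longiso\CM[V]^{W'}$ as graded rings. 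Taking $\operatorname{Proj}$ and using $\PM(V)/W'=\operatorname{Proj}(\CM[V]^{W'})$ then yields the first isomorphism, realised by $\pi_\fb'$.

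For the restriction to $\ZC(f_1)$ I would note that $f_1\in\CM[V]^{W'}$ and that, since $W'$ is finite and $\CM$ has characteristic zero, taking $W'$-invariants is exact; together with the domain property $f_1\CM[V]\cap\CM[V]^{W'}=f_1\CM[V]^{W'}$, this gives $(\CM[V]/(f_1))^{W'}=\CM[V]^{W'}/(f_1)$. Hence $\ZC(f_1)/W'=\operatorname{Proj}(\CM[V]^{W'}/(f_1))\cong\operatorname{Proj}\bigl((S/I)/(x_1)\bigr)$, which is the subvariety of $\PM(d_2,\dots,d_n,|\O_1|,\dots,|\O_r|)$ defined by $j_k^{e_{\O_k}}=P_{\fb,\O_k}(0,x_2,\dots,x_n)$, as claimed. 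The only genuinely substantive ingredient is that~(\ref{eq:j-omega}) is a complete set of relations for $\CM[V]^{W'}$ (no hidden relations), but this is exactly the presentation quoted from~\cite{stanley} and~\cite{lehrertaylor} and may be used freely; granting it, everything reduces to the standard correspondence between graded rings and their $\operatorname{Proj}$, which is the step I would treat most carefully.
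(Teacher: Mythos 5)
Your proposal is correct and follows exactly the route the paper intends: the paper offers no separate proof, deriving the proposition directly (``Consequently:'') from the presentation of $\CM[V]^{W'}$ in~(\ref{eq:inv-derive}) and~(\ref{eq:j-omega}) quoted from Stanley and Lehrer--Taylor, which is precisely your core step. Your additional verifications (non-vanishing of the $f_i$ at $v\neq 0$, exactness of taking $W'$-invariants for the restriction to $\ZC(f_1)$, and the $\operatorname{Proj}$ correspondence) are the standard details the paper leaves implicit, and they are all sound.
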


% \begin{cor}\label{coro:fano-bis}
% The variety $\PM(V)/W'$ is Fano.
% \end{cor}
%
% \bigskip

%\todo{Vraiment ? En fait, est-ce qu'on dit ``intersection complete''
%pour une sous-variete d'un espace projectif a poids definie par le bon nombre
%d'equations : pourtant ce n'est pas forcement une ``intersection complete''
%au sens habituel, si ?}

%\begin{proof}
%By Proposition~\ref{prop:zf-dw}, $\PM(V)/W'$ is a weighted complete
%intersection see \cite[Section 3.2]{dolga_weighted} in a weighted projective space
%with weights $d_1$,\dots, $d_n$, $|\O_1|$,\dots, $|\O_r|$,
%defined by $r$ equations of degree $e_{\O_1}|\O_1|$,\dots,
%$e_{\O_r}|\O_r|$. But
%\eqna
%\DS{\sum_{k=1}^r e_{\O_k}|\O_k|}&=&\DS{\sum_{k=1}^r (e_{\O_k}-1)|\O_k|
%+ \sum_{k=1}^r |\O_k|} \\
%&=& |\Refle(W)| + \DS{\sum_{k=1}^r |\O_k|} \\
%&=& -n + \DS{\sum_{k=1}^n d_k + \sum_{k=1}^r |\O_k|} \\
%& < & \DS{\sum_{k=1}^n d_k + \sum_{k=1}^r |\O_k|}, \\
%\endeqna
%by using~(\ref{eq:degres}) and~(\ref{eq:ref-hyp}).
%\todo{References + singularites ? J'ai ecrit les calculs, mais c'est a toi
%de completer les arguments geometriques}
%This implies that
%the anti-canonical bundle of $\PM(V)/W'$ is ample.
%\end{proof}

%\medskip

Note that $W/W^\slv$ is cyclic but $W^\slv$ is not necessarily equal
to the derived subgroup $W'$ of $W$. We have $W'=W^\slv$ if and only
if $|\AC/W|=1$. Now, let
$$J=\prod_{H \in \AC} \a_H=\prod_{\O \in \AC/W} J_\O  \in \CM[V].$$
It is well-defined up to a scalar and homogeneous of degree $|\AC|$.
It is the generator of the ideal of the reduced subscheme of the
ramification locus of the morphism $V \to V/W$.
Then by \cite[Remark~3.10~and~Proposition~4.4]{broue}
\equat\label{eq:jacobien} \lexp{w}{J}=\det(w)^{-1} J
\endequat
for all $w \in W$. In particular $J \in \CM[V]^{W^\slv}$ and
$J^{|W/W^\slv|} \in \CM[V]^W$. So there exists a unique polynomial
$P_\fb \in \CM[X_1,\dots,X_n]$, which is homogeneous of degree
$|W/W^\slv|\cdot |\AC|$ if we assign to $X_i$ the degree $d_i$, and
such that \equat\label{eq:j} J^{|W/W^\slv|}=P_\fb(f_1,\dots,f_n).
\endequat

\begin{pro}\label{prop:zf-wsl}
Assume that the map $H \mapsto e_H$ is constant on $\AC$ $($and let
$e$ denote this constant value, which coincides with $|W/W^\slv|)$.
Then $\CM[V]^{W^\slv}=\CM[f_1,f_2,\dots,f_n,J]$ and a presentation
is given by the single equation~\emph{(\ref{eq:j})}.

So the map
$$\fonction{\pi_\fb^\slv}{\PM(V)}{\PM(d_1,d_2,\dots,d_n,|\AC|)}{[v]}{[f_1(v) :
f_2(v) : \cdots : f_n(v) : J(v)]}$$ is well-defined and induces an
isomorphism
$$\PM(V)/W^\slv \longiso \{[x_1:\cdots:x_n:j] \in \PM(d_1,\dots,d_n,|\AC|)~|~
j^e=P_\fb(x_1,\dots,x_n)\}.$$ Moreover, $\pi_\fb^\slv$ induces by
restriction an isomorphism
$$\ZC(f_1)/W^\slv \longiso \{[x_2:\cdots:x_n:j] \in
\PM(d_2,\dots,d_n,|\AC|)~|~ j^e=P_\fb(0,x_2,\dots,x_n)\}.$$
\end{pro}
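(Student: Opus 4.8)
The plan is to establish the algebraic statement $\CM[V]^{W^\slv}=\CM[f_1,\dots,f_n,J]$ together with its presentation first, and then read off the two geometric isomorphisms by passing to $\mathrm{Proj}$. As a preliminary I would record that $e=|W/W^\slv|$. Since $W^\slv=\Ker(\det\colon W\to\CM^\times)$, the quotient $W/W^\slv$ is cyclic of order $|\det(W)|$. As $W=\langle\Refle(W)\rangle$, the group $\det(W)$ is generated by the $\det(s)$ for $s\in\Refle(W)$; for $H\in\AC$ the cyclic group $W(H)$ acts faithfully on $V/H$ through $\det$, so $\det(W(H))=\mub_{e_H}$, whence $\det(W)=\mub_{\mathrm{lcm}_H(e_H)}$. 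Under the hypothesis $e_H\equiv e$ this gives $\det(W)=\mub_e$ and $|W/W^\slv|=e$. In particular~(\ref{eq:j}) becomes $J^e=P_\fb(f_1,\dots,f_n)$, and by~(\ref{eq:jacobien}) we have $J\in\CM[V]^{W^\slv}$, so the inclusion $\CM[f_1,\dots,f_n,J]\subseteq\CM[V]^{W^\slv}$ is clear.

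The core step is the reverse inclusion, for which I would use the residual action of the cyclic group $W/W^\slv\cong\mub_e$ on $\CM[V]^{W^\slv}$. This yields the isotypic decomposition
\begin{equation*}
\CM[V]^{W^\slv}=\bigoplus_{k=0}^{e-1}\CM[V]^W_{\det^{-k}},\qquad
\CM[V]^W_{\det^{-k}}=\{g\in\CM[V]\mid \lexp{w}{g}=\det(w)^{-k}g\ \forall w\in W\}.
\end{equation*}
I then claim $\CM[V]^W_{\det^{-k}}=J^k\,\CM[V]^W$ for $0\le k<e$. The inclusion $\supseteq$ follows from~(\ref{eq:jacobien}). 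For $\subseteq$, fix $H\in\AC$ and a reflection $s$ generating $W(H)$, write $V=H\oplus L_s$ and expand $g=\sum_{m\ge0}\a_H^m g_m$ with $g_m\in\CM[H]$; since $s$ fixes $H$ pointwise and $\lexp{s}{\a_H}=\det(s)^{-1}\a_H$ with $\det(s)$ a primitive $e$-th root of unity, comparing $\lexp{s}{g}=\det(s)^{-k}g$ term by term forces $g_m=0$ unless $m\equiv k\pmod e$, hence $\a_H^{\,k}\mid g$. As the $\a_H$ are pairwise coprime this gives $J^k=\prod_{H}\a_H^{\,k}\mid g$, and $h:=g/J^k$ is $W$-invariant by~(\ref{eq:jacobien}). (Alternatively this is the special case of Stanley's theorem on relative invariants already invoked for~(\ref{eq:inv-derive}), the constancy of $e_H$ being exactly what makes the exponent uniform across $\AC$.) Summing over $k$ gives $\CM[V]^{W^\slv}=\bigoplus_{k=0}^{e-1}J^k\CM[V]^W$, a free $\CM[V]^W$-module with basis $1,J,\dots,J^{e-1}$.

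Because $\CM[V]^W=\CM[f_1,\dots,f_n]$ is a polynomial ring and $J^e=P_\fb(f_1,\dots,f_n)$, the surjection $\CM[X_1,\dots,X_n,Y]/(Y^e-P_\fb)\twoheadrightarrow\CM[V]^{W^\slv}$ sending $X_i\mapsto f_i$, $Y\mapsto J$ carries the free basis $\{Y^k\}_{0\le k<e}$ onto the free basis $\{J^k\}_{0\le k<e}$ and is therefore an isomorphism; this is precisely the asserted presentation. The first geometric claim now follows by taking $\mathrm{Proj}$: with $\deg f_i=d_i$ and $\deg J=|\AC|$ (compatible since $e|\AC|=|W/W^\slv|\cdot|\AC|=\deg P_\fb$), one has $\PM(V)/W^\slv=\mathrm{Proj}\,\CM[V]^{W^\slv}$, which the above identifies with the hypersurface $\{j^e=P_\fb(x_1,\dots,x_n)\}$ in $\PM(d_1,\dots,d_n,|\AC|)$, and $\pi_\fb^\slv$ is the induced map, well-defined because the $f_i$ never vanish simultaneously away from $0$ by Proposition~\ref{prop:projectif-a-poids}. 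For the restriction to $\ZC(f_1)$, I would use that taking $W^\slv$-invariants is exact in characteristic $0$ and that $f_1$ is $W$-invariant, so $f_1\CM[V]\cap\CM[V]^{W^\slv}=f_1\CM[V]^{W^\slv}$ and hence $(\CM[V]/(f_1))^{W^\slv}=\CM[V]^{W^\slv}/f_1\CM[V]^{W^\slv}$; setting $f_1=0$ in the presentation gives $\CM[f_2,\dots,f_n,J]/(J^e-P_\fb(0,f_2,\dots,f_n))$, whose $\mathrm{Proj}$ is the stated hypersurface in $\PM(d_2,\dots,d_n,|\AC|)$.

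I expect the only genuine obstacle to be the divisibility step $\CM[V]^W_{\det^{-k}}=J^k\CM[V]^W$; once the local computation at each reflecting hyperplane is in hand, the remainder is bookkeeping with free modules and $\mathrm{Proj}$. The hypothesis that $H\mapsto e_H$ is constant enters in two places: it forces $|W/W^\slv|=e$ so that~(\ref{eq:j}) reads literally $J^e=P_\fb$, and it makes the vanishing order $k$ the same along every hyperplane, so that the generator of the $\det^{-k}$-relative invariants is the single power $J^k$ rather than a mixed product of the $J_\O$ — which is exactly what occurs for $W'$ in Proposition~\ref{prop:zf-dw}.
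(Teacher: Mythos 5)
Your proposal is correct and takes essentially the same route as the paper: the paper states this proposition without a separate proof, as the constant-$e_H$ special case of Stanley's theorem on relative invariants already cited for~(\ref{eq:inv-derive}), and your decomposition $\CM[V]^{W^\slv}=\bigoplus_{k=0}^{e-1}J^k\CM[V]^W$ with the hyperplane-by-hyperplane divisibility argument is exactly a self-contained proof of that special case. The remaining steps (free basis $1,J,\dots,J^{e-1}$, the presentation $Y^e-P_\fb$, passing to $\mathrm{Proj}$, and the restriction to $\ZC(f_1)$ via $f_1\CM[V]\cap\CM[V]^{W^\slv}=f_1\CM[V]^{W^\slv}$) are the bookkeeping the paper leaves implicit, and you carry them out correctly.
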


% \begin{cor}\label{coro:fano}
% If the map $H \mapsto e_H$ is constant, then the variety $\PM(V)/W^\slv$ is Fano.
% \end{cor}
%
% \bigskip
%
% \begin{proof}
% Let $e$ denote the constant values of the $e_H$'s. Then
% $|\Refle(W)|=(e-1)|\AC|$ by~(\ref{eq:ref-hyp}) and $|W/W'|=e$. Therefore,
% $\PM(V)/W^\slv$ is defined by an equation of degree
% $e|\AC|=|\Refle(W)|+|\AC|=d_1+\cdots+d_n + |\AC| - n$
% by~(\ref{eq:degres}) in the weighted projective space
% $\PM(d_1,\dots,d_n,|\AC|)$. This implies that
% \todo{idem : references + singularites ?}
% the anti-canonical bundle of $\PM(V)/W^\slv$ is ample.
% \end{proof}
%

\subsection{Eigenspaces, Springer theory}
We now recall the basics of Springer and Lehrer-Springer theory: all
the results stated in this subsection can be found
in~\cite{springer},~\cite{lehrerspringer1},~\cite{lehrerspringer2}.
Note that some of the proofs have been simplified
in~\cite{lehrermichel}. Let us fix now a natural number $e$. We set
$$\D(e)=\{1 \le k \le n~|~e~\text{divides}~d_k\},$$
$$\D^*(e)=\{1 \le k \le n~|~e~\text{divides}~d_k^*\},$$
$$\d(e)=|\D(e)|\qquad\text{and}\qquad \d^*(e)=|\D^*(e)|.$$
With this notation, we have \equat\label{eq:max-dim} \d(e)=\max_{w
\in W} \bigl(\dim V(w,\z_e)\bigr).
\endequat
In particular, $\z_e$ is an eigenvalue of some element of $W$ if and
only if $\d(e) \neq 0$ that is, if and only if $e$ divides some
degree of $W$. In this case, we fix an element $w_e$ of $W$ such
that
$$\dim V(w_e,\z_e)=\d(e).$$
We set for simplification $V(e)=V(w_e,\z_e)$ and
$W(e)=W_{V(e)}/W(V(e))$: this subquotient of $W$ acts faithfully on
$V(e)$.

If $f \in \CM[V]$, we denote by $f^{[e]}$ its restriction to $V(e)$.
Note that if $i \not\in \D(e)$, then $f_i^{[e]}=0$ by
Lemma~\ref{lem:trivial}. Let us recall here the results of Springer and Lehrer-Springer 
we will need:

\begin{theorem}[Springer, Lehrer-Springer]\label{theo:springer}
Assume that $\d(e) \neq 0$. Then:
\begin{itemize}
\itemth{a} If $w \in W$, then there exists $x \in W$ such that
$x (V(w,\z_e)) \subset V(e)$.

\itemth{b} $W(e)$ acts (faithfully) on $V(e)$
as a group generated by reflections.

\itemth{c} The family
$(f_k^{[e]})_{k \in \D(e)}$ is a family of fundamental invaraints of
$W(e)$. In particular, the list of degrees of $W(e)$ consists of the
degrees of $W$ which are divisible by $e$ (i.e. $\Deg(W(e))=(d_k)_{k
\in \D(e)}$).

\itemth{d} We have
$$\bigcup_{w \in W} V(w,\z_e)=\bigcup_{x \in W} x(V(e)) =
\{v \in V~|~\forall~k \in \{1,2,\dots,n\} \setminus
\D(e),~f_k(v)=0\}.$$

\itemth{e} $\d^*(e) \ge \d(e)$ with equality if and only if
$W(V(w_e,\z_e)) = 1$.

\itemth{f} If $\d^*(e)=\d(e)$, then
$W(e)=W_{V(e)}=C_W(w_e)$ and the family of eigenvalues (with
multiplicity) of $w_e$ is equal to $(\z_e^{1-d_k})_{1 \le k \le n}$.
Moreover, if $w$ is such that $\dim V(w,\z_e)=\d(e)$, then $w$ is
conjugate to $w_e$.
\end{itemize}
\end{theorem}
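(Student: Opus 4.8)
The plan is to exploit the polynomial structure $\CM[V]^W=\CM[f_1,\dots,f_n]$ together with the quotient map $q\colon V\to V/W\simeq\AM^n$, $q(v)=(f_1(v),\dots,f_n(v))$, whose fibres are exactly the $W$-orbits. I would start with~(d), which is the cleanest statement and underpins everything else. The inclusion $\bigcup_{w\in W}V(w,\z_e)\subset\{v\mid f_k(v)=0\ \text{for}\ k\notin\D(e)\}$ is immediate from Lemma~\ref{lem:trivial}. For the reverse inclusion, take $v$ with $f_k(v)=0$ for all $k\notin\D(e)$ and compute $f_i(\z_e v)=\z_e^{d_i}f_i(v)$ for each $i$: if $i\in\D(e)$ then $\z_e^{d_i}=1$, while if $i\notin\D(e)$ then $f_i(v)=0$, so in both cases $f_i(\z_e v)=f_i(v)$. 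Hence $q(\z_e v)=q(v)$, so $\z_e v$ and $v$ lie in the same $W$-orbit; that is, $wv=\z_e v$ for some $w\in W$ and $v\in V(w,\z_e)$. This proves the outer equality of~(d); the middle equality $\bigcup_w V(w,\z_e)=\bigcup_x x(V(e))$ is exactly the content of~(a), treated below.

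From~(d) I would read off the easy half of~(c). On $V(e)$ one has $f_k^{[e]}=0$ for all $k\notin\D(e)$ by Lemma~\ref{lem:trivial}, so any common zero of the homogeneous polynomials $(f_k^{[e]})_{k\in\D(e)}$ is a common zero of all the $f_i$, hence lies in the nullcone of $q$, which reduces to the origin. Since there are $\d(e)=\dim V(e)$ such polynomials (by~(\ref{eq:max-dim})), they form a homogeneous system of parameters on $V(e)$; in particular they are algebraically independent, $\CM[V(e)]$ is finite over $\CM[f_k^{[e]}\mid k\in\D(e)]$, and $\CM[f_k^{[e]}\mid k\in\D(e)]\subset\CM[V(e)]^{W(e)}$, with the degrees of $W(e)$ containing $(d_k)_{k\in\D(e)}$. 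The genuinely hard point, which I expect to be \emph{the main obstacle}, is the reverse inclusion together with~(b): that $W(e)$ is a reflection group and that the $f_k^{[e]}$ already generate the \emph{whole} of $\CM[V(e)]^{W(e)}$. The natural route is a degree count establishing $|W(e)|=\prod_{k\in\D(e)}d_k$; once this holds, comparing $[\CM(V(e)):\CM(V(e))^{W(e)}]=|W(e)|$ with $[\CM(V(e)):\CM(f_k^{[e]}\mid k\in\D(e))]=\prod_{k\in\D(e)}d_k$ shows these fields coincide, and since $\CM[f_k^{[e]}\mid k\in\D(e)]$ is integrally closed with $\CM[V(e)]^{W(e)}$ integral over it, the inclusion becomes an equality; Shephard--Todd--Chevalley then yields~(b). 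Proving $|W(e)|=\prod_{k\in\D(e)}d_k$ is the heart of the Lehrer--Springer restriction theorem and requires a careful analysis of the generic fibre of $q|_{V(e)}$, i.e. of which points of a $W$-orbit actually lie in $V(e)$.

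For~(a) I would show that the $\z_e$-eigenspaces of maximal dimension $\d(e)$ form a single $W$-conjugacy class, and that every $V(w,\z_e)$ is carried inside one of them. Part~(d) already places each \emph{vector} of $V(w,\z_e)$ in some $x(V(e))$; upgrading this from vectors to the whole subspace is where one again invokes the restriction theorem of the previous paragraph, together with an induction on $\dim V(w,\z_e)$ that passes to the reflection group $W(e)$ acting on $V(e)$ and to the smaller eigenspace sitting inside it.

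Finally, for~(e) and~(f) I would move to the derivation side. Using the invariant derivations $(D_1,\dots,D_n)$ of degrees $d_1^*,\dots,d_n^*$ and the identity~(\ref{eq:derive}), one obtains a codegree analogue of~(\ref{eq:max-dim}) presenting $\d^*(e)$ as a maximal eigenspace dimension on $V^*$, which gives $\d^*(e)\ge\d(e)$; equality holds precisely when $V(e)$ lies in no reflecting hyperplane, i.e. when $W(V(e))=1$, which is the assertion that $w_e$ is $\z_e$-\emph{regular}. In that regular case one deduces $W(e)=W_{V(e)}=C_W(w_e)$, and the eigenvalue multiset of $w_e$ follows from a direct computation at a regular vector $v\in V(e)$: since $v$ is regular the differentials $d_v f_1,\dots,d_v f_n$ form a basis of $V^*$ (the Jacobian being nonzero off the reflecting hyperplanes, compare~(\ref{eq:jacobien})), and evaluating the action of $w_e$ on this basis using $w_e v=\z_e v$ and the homogeneity of the $f_k$ gives eigenvalues $(\z_e^{1-d_k})_{1\le k\le n}$ on $V$. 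As a consistency check, $\z_e^{1-d_k}=\z_e$ exactly when $e\mid d_k$, so the eigenvalue $\z_e$ occurs with multiplicity $|\D(e)|=\d(e)=\dim V(w_e,\z_e)$, as it must.
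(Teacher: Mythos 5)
The paper offers no proof of this theorem: it is quoted from the literature, with the explicit pointer that ``all the results stated in this subsection can be found in~\cite{springer},~\cite{lehrerspringer1},~\cite{lehrerspringer2}'' with simplified proofs in~\cite{lehrermichel}. So your attempt must be measured against those sources. The parts you actually prove are correct and are exactly Springer's original arguments: the outer equality in~(d) via the fact that the fibres of $v \mapsto (f_1(v),\dots,f_n(v))$ are the $W$-orbits, and the eigenvalue computation in~(f) via the differentials $d_vf_1,\dots,d_vf_n$ at a regular vector, which form a basis of $V^*$ by the Jacobian criterion. Incidentally, (a) is cheaper than you make it: $\bigcup_{w}V(w,\z_e)$ is a \emph{finite} union of linear subspaces, so choosing $v \in V(w,\z_e)$ outside every eigenspace $V(w',\z_e)$ that does not contain $V(w,\z_e)$ and applying~(d) to this single $v$ forces $V(w,\z_e) \subset x(V(e))$ outright --- no induction and no restriction theorem is needed there.

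The genuine gaps are two. First, you openly defer the heart of the matter --- the identity $|W(e)|=\prod_{k}d_k$ (product over the degrees divisible by $e$), from which (b) and the completeness half of~(c) follow --- to ``a careful analysis of the generic fibre'' that is never carried out; this is precisely the content of the Lehrer--Springer restriction theorem and is not a routine verification, so as written your argument for (b), (c) and the conjugacy claim in~(f) rests on an unproven input. Second, and more seriously, your route to~(e) is based on a claimed ``codegree analogue of~(\ref{eq:max-dim}) presenting $\d^*(e)$ as a maximal eigenspace dimension on $V^*$'', and no such statement can hold: the eigenvalues of $w$ on $V^*$ are the inverses of those on $V$, so $\max_{w}\dim V^*(w,\z_e)=\max_w \dim V(w,\z_e^{-1})=\d(e)$ again (as $\z_e^{-1}$ is also a primitive $e$-th root of unity), and this maximum cannot produce $\d^*(e)$; relatedly, $0$ is always a codegree (the Euler derivation, cf.\ Table~\ref{table:degres}) while $0$ is never a degree, so the codegrees are not the degrees of any reflection action on $V^*$. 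The actual proof of $\d^*(e)\ge\d(e)$ (Lehrer--Springer, simplified in~\cite{lehrermichel}) works instead by restricting the homogeneous basis $(D_1,\dots,D_n)$ of $\Der(\CM[V])^W$ to $V(e)$: an equivariance computation in the spirit of~(\ref{eq:derive}) shows only the $D_i$ with $e \mid d_i^*$ can restrict nontrivially, and a rank count against $\dim V(e)$ gives the inequality, with equality forcing $V(e)$ to avoid all reflecting hyperplanes, i.e.\ $W(V(e))=1$. So your proposal is a partly correct outline of the classical proofs, but (b), the full statement of~(c), and all of~(e) remain unproved, and the one concrete mechanism you propose for~(e) would fail.
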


\begin{remark}\label{rem:stab-cyclique}
Let $k \in \{1,2,\dots,n\}$ be such that $\d(d_k)=\d^*(d_k)=1$. Then
$z_k=V(d_k)$ is a line in $V$, so we can view it as an element of
$\PM(V)$. By Theorem~\ref{theo:springer}(f), the stabilizer
$W_{z_k}$ of $z_k$ acts faithfully on $V(d_k)$, so it is cyclic and
contains $w_{d_k}$. In fact,
$$W_{z_k}=\langle w_{d_k} \rangle.$$
For proving this, let $e=|W_{z_k}|$. We just need to verify that
$e=d_k$. But $d_k$ divides $e$ and $\z_e$ is the eigenvalue of some
elements of $W$. So $e$ divides some $d_j$ by the remark
following~\eqref{eq:max-dim}. Therefore, $d_k$ divides $d_j$ and so
$d_k=d_j$ because $\d(d_k)=1$. This proves that $e=d_k$, as
desired.
\end{remark}

\begin{cor}\label{coro:vp-tangent}
Assume that $\d(e) =\d^*(e) \neq 0$ and let $k_0 \in
\{1,2,\dots,n\}$ be such that $d_{k_0}$ is divisible by $e$. Let $v
\in V(e)\setminus \{0\}$ and let $z=[v]$.
\begin{itemize}
\itemth{a} The family of eigenvalues of $w_e$ for its
action on the tangent space $\Trm_z(\PM(V))$ is equal to
$(\z_e^{-d_k})_{k \neq k_0}$.

\itemth{b} Let $f \in \CM[V]^W$ be homogeneous of degree $d$
and assume that $f(v)=0$. Then:
\begin{itemize}
\itemth{b1} If $d \not\equiv d_k \mod e$ for all $k \neq k_0$, then
$\ZC(f)$ is singular at $z$.

\itemth{b2} Assume that $\ZC(f)$ is smooth at $z$ and
let $k_1 \neq k_0$ be such that $d \equiv d_{k_1} \mod e$ $($the
existence of $k_1$ is guaranted by~$($\emph{b1}$))$. Then the family of
eigenvalues of $w_e$ for its action on the tangent space
$\Trm_z(\ZC(f))$ is equal to $(\z_e^{-d_k})_{k \neq k_0,k_1}$.
\end{itemize}
\end{itemize}
\end{cor}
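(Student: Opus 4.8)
The plan is to carry out the whole computation inside a single affine chart around $z$, exactly as in the proof of Lemma~\ref{lem:trivial-tangent}; the one genuinely new feature is that $w_e$ does not fix the representative $v$ but only scales it by $\z_e$, so the linear model of the chart will be twisted by $\z_e^{-1}$. Since $\d^*(e)=\d(e)$, Theorem~\ref{theo:springer}(f) supplies the complete list $(\z_e^{1-d_k})_{1\le k\le n}$ of eigenvalues of $w_e$ on $V$. As $e$ divides $d_{k_0}$, the $k_0$-th entry equals $\z_e^{1-d_{k_0}}=\z_e$, which is exactly the eigenvalue of $w_e$ on the line $\CM v\subset V(e)$. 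Choosing, by semisimplicity of $w_e$, a $w_e$-stable complement $E$ of $\CM v$ in $V$, the space $E\cong V/\CM v$ then carries the remaining eigenvalues $(\z_e^{1-d_k})_{k\neq k_0}$.

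Next I would identify the chart. Pick $\a\in V^*$ with $\a(v)=1$ and $\a(E)=0$, so that $U_\a\cong v+E\cong E$ with $z$ corresponding to $0$; under this identification $\Trm_z(\PM(V))=E$. Computing the induced action, $w_e\cdot[v+e]=[\z_e v+w_e e]=[v+\z_e^{-1}w_e e]$, so in the coordinate $e\in E$ the automorphism $w_e$ becomes the linear map $\phi=\z_e^{-1}\,w_e|_E$, whose eigenvalues are $(\z_e^{-d_k})_{k\neq k_0}$. This is precisely assertion~(a).

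For part~(b) I would set $F(e)=f(v+e)\in\CM[E]$ and exploit the $W$-invariance and homogeneity of $f$. Since $f(w_e(v+e))=f(v+e)$ and $f$ is homogeneous of degree $d$, one finds $f(v+e)=\z_e^{d}\,F(\phi(e))$, i.e. $F\circ\phi=\z_e^{-d}F$; as $\phi$ is linear this descends to each homogeneous component, giving $F_j\circ\phi=\z_e^{-d}F_j$ for all $j$, and in particular for the linear part $F_1$. Recall from the proof of Lemma~\ref{lem:trivial-tangent} that $\Trm_z(\ZC(f))=\Ker(F_1)$ when $F_1\neq 0$, and equals $E$ (so $z$ is singular) when $F_1=0$.

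The two claims then follow by a dual-eigenvalue argument. If $F_1\neq 0$ it is a nonzero eigenvector of the transpose $\phi^*\colon\ell\mapsto\ell\circ\phi$ with eigenvalue $\z_e^{-d}$; since $\phi^*$ has the same eigenvalues as $\phi$, this forces $\z_e^{-d}\in\{\z_e^{-d_k}:k\neq k_0\}$, i.e. $d\equiv d_k\ (\mathrm{mod}\ e)$ for some $k\neq k_0$. The contrapositive is exactly~(b1). For~(b2), smoothness gives $F_1\neq 0$, so $\Trm_z(\ZC(f))=\Ker(F_1)$ is $\phi$-stable; decomposing $E$ into $\phi$-eigenspaces shows that its eigenvalue multiset is that of $E$ with exactly one copy of $\z_e^{-d_{k_1}}$ deleted, namely $(\z_e^{-d_k})_{k\neq k_0,k_1}$. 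I expect the only delicate point to be the twisted bookkeeping forced by $w_e(v)=\z_e v$: getting the factor $\z_e^{-1}$ in $\phi$ right (hence the eigenvalue $\z_e^{-d}$ rather than $\z_e^{d}$ or $1$), and checking that deleting ``the $k_0$'' or ``the $k_1$'' term is legitimate precisely because all indices yielding the same eigenvalue are interchangeable.
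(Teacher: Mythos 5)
Your proof is correct and follows essentially the same route as the paper: both pass to the affine chart at $z$ with a $w_e$-stable complement of $\CM v$, use the Springer eigenvalue list $(\z_e^{1-d_k})$ together with the twist by $\z_e^{-1}$ to get~(a), and combine $W$-invariance with homogeneity of degree $d$ to force the congruence $d \equiv d_k \bmod e$. Your basis-free relation $F_1 \circ \phi = \z_e^{-d} F_1$ and the transpose-eigenvalue argument are just the coordinate computation of the paper (the identity $\z_e^{1-d_k}(\partial_{v_k}f)(v) = \z_e^{1-d}(\partial_{v_k}f)(v)$) repackaged, so the two proofs coincide in substance.
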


\begin{proof}
By permuting if necessary the degrees, we may assume that $k_0=1$.
Note that $\z_e^{1-d_1}=\z_e$. Choose a basis $(v_1,\dots,v_n)$ of
$V$ such that $v=v_1$ and $w(v_k)=\z_e^{1-d_k} v_k$ for all $k \in
\{1,2,\dots,n\}$ (see Theorem~\ref{theo:springer}(e)).

\medskip

(a) Identify $\PM(V)$ with $\PM^{n-1}(\CM)$ through the choice of
this basis. Then the action of $w_e$ is transported to
$$w_e \cdot [x_1:x_2:\cdots:x_n]=
[\z_e x_1:\z_e^{1-d_2}x_2:\cdots:\z_e^{1-d_n} x_n]
=[x_1:\z_e^{-d_2}x_2:\cdots:\z_e^{-d_n} x_n].$$ Since $z=[1 : 0 :
\cdots : 0]$, this shows~(a).

\medskip

(b) Let us work in the affine chart ``$x_1=1$'', identified with
$\Ab^{n-1}(\CM)$ through the coordinates $(x_2,\dots,x_n)$. The
equation of the tangent space $\Trm_z(\ZC(f))$ is given in this
chart by
$$\sum_{k=2}^n (\partial_{v_k}f)(v) x_k =0.$$
By~(\ref{eq:derive}),
$$\lexp{w}{(\partial_{v_k} f)}(v)=\z_e^{1-d_k} (\partial_{v_k} f)(v).$$
But
$$\lexp{w}{(\partial_{v_k} f)}(v)=(\partial_{v_k} f)(w^{-1}(v))
=(\partial_{v_k} f)(\z_e^{-1} v).$$ As $\partial_{v_k} f$ is
homogeneous of degree $d-1$, this implies that
$$\z_e^{1-d_k} (\partial_{v_k} f)(v) = \z_e^{1-d}(\partial_{v_k} f)(v).$$
Therefore, if $d \not\equiv d_k \mod e$ for all $k \in
\{2,\dots,n\}$, we get $(\partial_{v_k} f)(v)=0$ for all $k \in
\{2,\dots,n\}$, and so $\ZC(f)$ is singular ar $[v]$. This shows
(b1).

Now, if $\ZC(f)$ is smooth at $z$, then there exists $k_1 \in
\{2,\dots,n\}$ such that $\partial_{v_{k_1}} f(v) \neq 0$, and in
particular $d\equiv d_{k_1} \mod e$. Then there exists $k \in
\{2,\dots,n\}$ such that $\partial_{v_k} f(v) \neq 0$. This shows
that the action of $w_e$ on the one-dimensional space
$\Trm_z(\PM(V))/\Trm_{[v]}(\ZC(f))$ is given by multiplication by
$\z_e^{-d_{k_1}}=\z_e^{-d}$. The proof of~(b2) is complete.
\end{proof}

%%%%%%%%%%%%%%%%%%%%%%%%%%%%%%
\section{Determining singularities}\label{singu}
%%%%%%%%%%%%%%%%%%%%%%%%%%%%%%%%%%%%

An important step for analyzing the properties of the K3 surfaces
constructed in the next section is to determine the singularities of
the variety $\ZC(f)/\G$ in the cases we are interested in (here, $f$
is a fundamental invariant of $W$ and $\G$ is a subgroup of $W$). We
provide in this section two different tools that will be used in the
sequel to this paper~\cite{bonnafe-sarti-2},~\cite{bonnafe-sarti-3},
where particular examples will be studied.

\subsection{Stabilizers}\label{sub:stab}
The singularity of $\ZC(f)/\G$ at the $\G$-orbit of $z \in \ZC(f)$
depends on the singularity of $\ZC(f)$ at $z$ and the action of
$\G_z$ on this (eventually trivial) singularity. We investigate here
some facts about the stabilizers $W_z$ and their action on the
tangent space $\Trm_z(\ZC(f))$.

\medskip

Let $f$ denote a homogeneous invariant of $W$, let $d$ denote its
degree and let $v \in V \setminus\{0\}$ be such that $f(v)=0$. We
set $z=[v] \in \ZC(f) \subset \PM(V)$. We denote by $\theta_z : W_z
\longto \CM^\times$ the linear character defined by
$w(v)=\theta_z(w)v$ for all $w \in W_z$. Then $W_v=\Ker(\theta_z)$
and we denote by $e_z=|{\mathrm{Im}}(\theta_z)|$. So there exists $w
\in W_z$ such that $\theta_z(w)=\z_{e_z}$. In other words, $v \in
V(w,\z_{e_z})$ and so, by Theorem~\ref{theo:springer}(a), we may,
and we will, assume that $v \in V(w_{e_z},\z_{e_z})=V(e_z)$. This
shows that \equat\label{eq:inertie} W_z=W_v \langle w_{e_z} \rangle.
\endequat
Recall from~\S\ref{sub:reflecting} that $W_v$ is a parabolic
subgroup of $W$ and so is generated by reflections. Note the
following useful facts:
\begin{itemize}
\itemth{a} Let $m=|\Zrm(W)|$
(recall from~(\ref{eq:centre}) that $m={\mathrm{gcd}}(\Deg(W))$).
Since $\mub_m=\Zrm(W) \subset W_z$, $m$ divides $e_z$.

\itemth{b} If $\d(e_z)=\d^*(e_z)$, $f(v)=0$ and $\ZC(f)$ is smooth at $z$,
then the eigenvalues of $w_{e_z}$ on the tangent space
$\Trm_z(\ZC(f))$ are given by Corollary~\ref{coro:vp-tangent}.

\itemth{c} Let $P$ be a parabolic subgroup of $W$ of rank
$n-2$ and assume that $\ZC(f)$ is {\it smooth}. Then $\dim V^P=2$
and so $L = \PM(V^P)$ is a line in $\PM(V)$. Then $L$ intersects
$\ZC(f)$ transversally by Corollary~\ref{coro:transverse}, so $|L
\cap \ZC(f)|=d$ because $f$ has degree $d$. Moreover,
\equat\label{eq:stab} \text{\it If $z \in L \cap \ZC(f)$, then
$W_v=P$.}
\endequat
Indeed, $P \subset W_v$ by construction and, if this inclusion is
strict, this means that $W_v$ has rank $n-1$ or $n$. But it cannot
have rank $n$ for otherwise $W_v=W$ and $v=0$ (which is impossible).
And it cannot have rank $n-1$ because
Corollary~\ref{coro:v-fixe-singulier} would imply that $\ZC(f)$ is
singular at $z$, contrarily to the hypothesis. This implies for
instance that two smooth points in $L \cap \ZC(f)$ are in the same
$\G$-orbit if and only if they are in the same $\Nrm_\G(P)$-orbit.

Moreover, in this case, we have a $P$-equivariant isomorphism
\equat\label{eq:tangent equivariant} \Trm_{[v]}(\ZC(f)) \simeq V/V^P
\endequat
(see the proof of Corollary~\ref{coro:transverse}).
\end{itemize}

\subsection{Singularities of double covers}
If $n=4$, $\G=W^\slv$ and $W$ is generated by reflections of order
$2$, then the surface $\ZC(f)/\G$ is the double cover of a weighted
projective plane. Most of (but not all) the singularities of
$\ZC(f)/\G$ may be then analyzed through the singularities of the
branch locus of this cover.

So we fix a double cover $\pi : Y \to X$ between two irreducible
algebraic surfaces and we assume that $Y$ is normal and $X$ is
smooth. By the purity of the branch locus, the branch locus $R$ of
$\pi$ is empty or pure of codimension $1$ (i.e. pure of dimension
$1$). The next well-known fact (see for
instance~\cite[Part~III,~\S{7}]{BPV}) will help us in our explicit
computations:

\begin{pro}\label{prop:double cover}
Let $y \in Y$ be such that $x=\pi(y)$ belongs to $R$. We assume that
$x$ is an ADE curve singularity of the branch locus $R$. Then $y$ is
an ADE surface singularity of the same type.
\end{pro}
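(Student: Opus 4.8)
The plan is to reduce everything to a local computation in the analytic (or formal) local rings at $x$ and $y$, produce the standard local model $z^2=g(u,v)$ of the double cover, and then match normal forms of ADE singularities.

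First I would produce the local model. Since $X$ is smooth, $\OC_{X,x}$ is a regular local ring of dimension $2$, hence a normal UFD. The field extension $\CM(Y)/\CM(X)$ has degree $2$, so $\CM(Y)=\CM(X)(\sqrt{h})$ for some $h\in\CM(X)^\times$; clearing squares in the UFD $\OC_{X,x}$ we may take $h=g$ with $g$ squarefree. As $Y$ is normal, the (semi-)local ring of $Y$ over $x$ is the integral closure of $\OC_{X,x}$ in $\CM(Y)$, and because $g$ is squarefree this integral closure is exactly $\OC_{X,x}[z]/(z^2-g)$. Thus, in suitable local coordinates $(u,v)$ of $X$ centred at $x$, the germ $(Y,y)$ is analytically isomorphic to the hypersurface $\{z^2=g(u,v)\}\subset\CM^3$, where $g=0$ is a \emph{reduced} local equation of the branch curve $R$ (indeed the discriminant of $z^2-g$ is $g$ up to a unit, so $\{g=0\}=R$ near $x$, and reducedness is forced by the normality of $Y$). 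Since $x\in R$ we have $g(x)=0$, so the fibre $z^2=0$ is a single point and $y$ is the unique point over $x$, corresponding to the origin.

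Next I would invoke the classification of simple plane-curve singularities. By hypothesis the germ $\{g=0\}$ is ADE of some type $T\in\{A_n,D_n,E_6,E_7,E_8\}$, so after an analytic change of the coordinates $(u,v)$ I may assume $g$ equals the standard normal form for $T$, up to a unit factor; such a unit $c$ can be written $c=w^2$ for a unit $w$ (its analytic square root exists in characteristic $0$), and the substitution $z\mapsto w(u,v)\,z$ absorbs it, so we may take $g$ literally equal to the normal form. The point is then the classical \emph{suspension} property: adding the square $z^2$ carries each curve normal form to the surface normal form of the same name, namely
\begin{gather*}
A_n:\ z^2+u^2+v^{n+1},\qquad D_n:\ z^2+u^2v+v^{n-1},\qquad E_6:\ z^2+u^3+v^4,\\
E_7:\ z^2+u^3+uv^3,\qquad E_8:\ z^2+u^3+v^5.
\end{gather*}
These are exactly the normal forms of the corresponding ADE (rational double point) surface singularities, so $(Y,y)$ is an ADE surface singularity of the same type $T$.

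The only delicate step is the first one: obtaining the model $z^2=g$ with $g$ reduced, which rests on $X$ being smooth (so that $\OC_{X,x}$ is a regular UFD and $g$ may be taken squarefree) and on $Y$ being normal (so that $\OC_{Y,y}$ is the integral closure, forcing $g$ reduced). The remaining content is the purely combinatorial matching in the table above, which is classical and can also be read off from \cite[Part~III,~\S{7}]{BPV}. I would finally remark that the ADE type is an intrinsic invariant of the analytic germ, independent of the chosen coordinates and of the square-root $w$, so the assignment of the type is well defined.
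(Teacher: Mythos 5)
Your proof is correct, and it coincides with the paper's treatment: the paper does not prove Proposition~\ref{prop:double cover} but cites it as a well-known fact from \cite[Part~III,~\S{7}]{BPV}, and your argument --- the local model $z^2=g(u,v)$ with $g$ a reduced equation of the branch curve (using smoothness of $X$ and normality of $Y$ to identify $\OC_{Y,y}$ with the integral closure $\OC_{X,x}[z]/(z^2-g)$, $g$ squarefree), followed by absorbing the unit via an analytic square root and matching the suspension $z^2+f_T$ of each simple plane-curve normal form with the rational double point of the same type --- is precisely the standard argument underlying that reference. All the delicate points (reducedness of $g$, existence of the square root of the unit in characteristic $0$, uniqueness of $y$ over $x$) are handled correctly, so there is nothing to add.
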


%%%%%%%%%%%%%%%%%%%%%%%%%%%%%%%%%%
\section{Invariant K3 surfaces}\label{invK3}
%%%%%%%%%%%%%%%%%%%%%%%%%%%%%%%%%%%%%%%

\boitegrise{{\bf Hypothesis.} {\it In this section, and only in this
section, we assume that $n=4$ and that $W$ is irreducible and
generated by reflections of order $2$.}}{0.8\textwidth}

\subsection{Classification}
We provide in Table~\ref{table:degres} the list of irreducible
complex reflection groups $W$ of rank $4$ which are generated by
reflections of order $2$ together with the following informations:
the order of $W$, the order of $W/\Zrm(W)$ (which is the group that
acts faithfully on $\PM(V)$), the order of $W'$ and the lists of
degrees and codegrees. We also recall their notation in
Shephard-Todd classification~\cite{shephardtodd} as well as their
Coxeter name whenever they are real.

\begin{table}
$$\begin{array}{!{\vline width 2pt} c !{\vline width 2pt} c|c|c|c!{\vline width 2pt}}
\hlinewd{2pt}
 W & |W| & |W/\Zrm(W)| & |W'| &
\begin{array}{c}\Deg(W) \\ \Codeg(W) \end{array} \\
\hlinewd{2pt} G(1,1,5) =\Wrm(A_4) \simeq \SG_5 & 120 & 120 & 60 &
\begin{array}{c}2,3,4,5 \\ 0,1,2,3 \end{array} \\
\hline G(e,e,4), e \ge 2 & 24e^3 & 24e^3/{\mathrm{gcd}}(e,4) & 12e^3
&
\begin{array}{c} e,2e,3e,4 \\ 0,e,2e,3e-4 \end{array} \\
\hline G(2e,e,4), e \ge 1 & 384e^3 & 192e^3/{\mathrm{gcd}}(e,4) &
96e^3 &
\begin{array}{c}2e,4e,6e,8 \\ 0,2e,4e,6e \end{array} \\
\hline
% G(4,4,4) & 1~\!536 & 384 & 768 &
% \begin{array}{c}4,4,8,12 \\ 0,4,8,8 \end{array} \\
% \hline
% G(4,2,4) & 3~\!072 & 768 & 768 &
% \begin{array}{c}4,8,8,12 \\ 0,4,8,12 \end{array} \\
% \hline
G_{28}=\Wrm(F_4) & 1\,152 & 576 & 288 &
\begin{array}{c}2, 6, 8, 12 \\ 0,4,6,10 \end{array} \\
\hline G_{29} & 7\,680 & 1\,920 & 3\,840 &
\begin{array}{c}4, 8, 12, 20 \\ 0,8,12,16\end{array} \\
\hline G_{30}=\Wrm(H_4) & 14\,400 & 7\,200 & 7\,200 &
\begin{array}{c}2, 12, 20, 30 \\ 0, 10, 18, 28\end{array} \\
\hline G_{31} & 46\,080 & 11\,520 & 23\,040 &
\begin{array}{c}8, 12, 20, 24 \\ 0, 12, 16, 28 \end{array} \\
% \hline
% G_{32} & 155~\!520 & 25~\!920 &
% \begin{array}{c} 12, 18, 24, 30 \\ 0, 6, 12, 18 \end{array} \\
\hlinewd{2pt}
\end{array}
$$
\refstepcounter{theorem} \caption{Irreducible complex reflection
groups of rank $4$ generated by reflections of order
$2$}\label{table:degres}
\end{table}

Recall that $G(2,1,4)=\Wrm(B_4)$ and $G(2,2,4)=\Wrm(D_4)$. Note that
the hypothesis on the order of the reflections implies that
\equat\label{eq:wsl-2} |W^\slv|=\frac{|W|}{2}.
\endequat
In particular, $W' \neq W^\slv$ if and only if $W=G_{28}$ or
$W=G(2e,e,4)$ for some $e \ge 1$. Also, note the following diagram
of non-trivial inclusions between those of the complex reflection
groups which are contained in a primitive one (here, $H
\stackrel{\vartriangleleft}{\longinjto} G$ means that $H$ is a
normal subgroup of $G$). \equat\label{eq:inclusions}
\begin{array}{c}\xymatrix{
& G(2,1,4) \xyinj[r] \xyinj[dr] & G_{29} \xyinj[dr]\\
G(2,2,4) \xyinj[dr]\xyinj[ur]^{\vartriangleleft}
\xyinj[rr]^{\vartriangleleft}
& & G_{28} \xyinj[r] & G_{31} \\
& G(4,4,4) \xyinj[r]^{\vartriangleleft} & G(4,2,4) \xyinj[ur] }
\end{array}
\endequat

\subsection{K3 surfaces}\label{sub:k3}
Equations of surfaces of the form $\ZC(f)/W'$ or $\ZC(f)/W^\slv$
(where $f$ is a fundamental invariant of degree $d$) in a weighted
projective space are provided by Propositions~\ref{prop:zf-dw}
and~\ref{prop:zf-wsl}. Whenever some arithmetic conditions on $d$
and the degrees of $W$ are satisfied, it can then be proven thanks
to results of Appendix~\ref{app:2-forme} (and particularly
Corollary~\ref{coro:canonique}) that the canonical sheaf of
$\ZC(f)/W'$ or $\ZC(f)/W^\slv$ is trivial (provided that $\ZC(f)$ is
normal, so that the quotient is also normal and the canonical sheaf
is well-defined): it turns out that, in most cases, the quotient
$\ZC(f)/W'$ or $\ZC(f)/W^\slv$ has only ADE singularities and
positive Euler characteristic so that their minimal resolution are
K3 surfaces. A particular feature of these examples is that their
minimal resolution have always a big Picard number, as big as
possible compare to the number of moduli of the family. Note that
some of these examples were already studied by Barth and the second
author~\cite{barth-sarti}: we revisit these cases and simplify some
arguments using more theory about complex reflection groups.

We denote by $\KC_3$ the set of pairs $(W,d)$ where $W$ is an
irreducible complex reflection group of rank $4$ and $d$ is a
positive integer satisfying one of the following conditions:
\begin{itemize}
\item $W=G(1,1,5)$, $G(4,2,4)$ or $G_{29}$,
and $d=4$.

\item $W=G(2e,2e,4)$ with $e$ odd, and $d \in \{4e,6e\}$.

\item $W=G(4e,4e,4)$, and $d =4e$.

\item $W=G(2,1,4)$, and $d \in \{4,6\}$.

\item $W=G_{28}$, and $d \in \{6,8\}$.

\item $W=G_{30}$, and $d=12$.

\item $W=G_{31}$, and $d=20$.
\end{itemize}

\begin{theorem}\label{theo:main}
Assume that $(W,d) \in \KC_3$. Let $\G$ be the subgroup $W'$ or
$W^\slv$ of $W$ and let $f$ be a fundamental invariant of $W$ of
degree $d$ such that $\ZC(f)$ has only ADE singularities.
% If $(W,d)=(G(2e,2e,4),4e)$
% for some odd $e$, we assume moreover that $\ZC(f)$ is
% smooth\footnote{In this case, we do not know if the result holds
% in the general case, but we believe it should.}.

Then $\ZC(f)/\G$ is a K3 surface with ADE singularities.
\end{theorem}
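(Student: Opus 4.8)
The plan is to establish the two defining features of a (possibly singular) K3 surface separately, and then pass to the minimal resolution. Write $S := \ZC(f)/\G$. I would first record that $S$ is a normal projective surface: by hypothesis $\ZC(f)$ has only $ADE$ singularities, hence is normal, and since $\G$ is a finite group the quotient $S$ is again normal, so the dualizing sheaf $\omega_S$ is well-defined. The goal then splits into showing (i) $\omega_S \cong \OC_S$, and (ii) $S$ has only $ADE$ singularities; once both hold, the minimal resolution $\mu\colon \tilde S \to S$ is a K3 surface.

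For the dualizing sheaf I would begin by giving $S$ an explicit model as a weighted complete intersection. When $\G = W^\slv$, Proposition~\ref{prop:zf-wsl} realizes $S$ as a double cover of a weighted projective plane $\PM(d_2,d_3,d_4)$, namely the hypersurface $j^2 = P_\fb(0,x_2,x_3,x_4)$ in $\PM(d_2,d_3,d_4,|\AC|)$; when $\G = W'$ (the genuinely new case being $W = G_{28}$ or $G(2e,e,4)$, where $W' \neq W^\slv$), Proposition~\ref{prop:zf-dw} realizes $S$ as the complete intersection of the $r = |\AC/W|$ hypersurfaces $j_k^{e_{\O_k}} = P_{\fb,\O_k}(0,x_2,\dots,x_n)$ inside $\PM(d_2,d_3,d_4,|\O_1|,\dots,|\O_r|)$. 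In either case I would invoke the adjunction formula for weighted complete intersections from Appendix~\ref{app:2-forme}, in the form of Corollary~\ref{coro:canonique}, which identifies $\omega_S$ with $\OC_S(\sigma)$, where $\sigma$ is the difference between the total degree of the defining equations and the total weight of the ambient space. The condition $(W,d) \in \KC_3$ is exactly the ``well-chosen'' requirement $\sigma = 0$, so it yields $\omega_S \cong \OC_S$.

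The hard part will be proving that the singularities of $S$ are all of $ADE$ type; this is the core difficulty and where Springer theory enters. I would stratify the singular points of $S$ according to their origin: (a) points lying over a singular point of $\ZC(f)$, where one must control how the stabilizer in $\G$ acts on a singularity already present; (b) points where $\ZC(f)$ is smooth but $\G$ acts with nontrivial stabilizer, producing a quotient singularity; and (c) points forced by the singular locus of the ambient weighted projective space. The essential input for (b) is the stabilizer analysis of \S\ref{sub:stab}: the decomposition $W_z = W_v\,\langle w_{e_z}\rangle$, together with Corollary~\ref{coro:vp-tangent}, pins down the eigenvalues of the acting element on $\Trm_z(\ZC(f))$, presenting the local action as an explicit small linear action on a smooth surface germ whose quotient I can then recognize as $ADE$ via Appendix~\ref{app:auto-ade}. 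In the double-cover case $\G = W^\slv$, Proposition~\ref{prop:double cover} further reduces many checks to verifying that the branch curve has only $ADE$ curve singularities. The main obstacle is to carry this out as uniformly as possible, keeping case-by-case analysis to a minimum; the precise determination of the singularity types is what is completed in Section~\ref{sec:singularites}.

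Finally, to conclude that $\tilde S$ is a K3 surface, I would argue as follows. Since $ADE$ singularities are rational double points, the minimal resolution $\mu$ is crepant, so $\omega_{\tilde S} \cong \mu^*\omega_S \cong \OC_{\tilde S}$ by the second step. It remains to check that the irregularity vanishes. Rationality of the singularities gives $H^1(\tilde S,\OC_{\tilde S}) = H^1(S,\OC_S)$, and, working in characteristic zero, $H^1(S,\OC_S) = H^1(\ZC(f),\OC_{\ZC(f)})^\G$. But $\ZC(f)$ is a hypersurface in $\PM(V) \simeq \PM^3$, so the exact sequence $0 \to \OC_{\PM^3}(-d) \to \OC_{\PM^3} \to \OC_{\ZC(f)} \to 0$ forces $H^1(\ZC(f),\OC_{\ZC(f)}) = 0$. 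Hence $H^1(\tilde S,\OC_{\tilde S}) = 0$, and a smooth projective surface with trivial canonical bundle and vanishing irregularity is a K3 surface, completing the proof.
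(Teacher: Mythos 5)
There is a genuine gap: statement (S) --- that $\ZC(f)/\G$ has only ADE singularities --- is the technical heart of the theorem, and your proposal only outlines a program for it instead of proving it; moreover, the program as sketched would not succeed. Knowing the eigenvalues of the stabilizer on $\Trm_z(\ZC(f))$ (via \S\ref{sub:stab} and Corollary~\ref{coro:vp-tangent}) is not enough: a finite cyclic quotient of a smooth surface germ is an ADE singularity only when the acting group sits in $\Sb\Lb_2(\CM)$, and, for instance, the cyclic quotient $\frac{1}{3}(1,1)$ is not ADE. So the whole point is a structural reason why the relevant local actions have determinant $1$, or why the index is at most $2$, and this is exactly what the paper supplies and your sketch omits: for $d=4$, $\G \subset \Sb\Lb_4(\CM)$ acts symplectically on the (already K3) surface $\ZC(f)$ and one quotes Nikulin; over the smooth locus of $\ZC(f)/W \simeq \PM(d_1,d_2,d_3)$ one uses that $\G$ has index $2$ in $W$ together with Corollary~\ref{coro:ade}, whose proof requires the nontrivial lifting Lemma~\ref{lem:auto-ade} and Lemma~\ref{lem:gl2-ade}; at the points lying over the singular points of $\PM(d_1,d_2,d_3)$, Lemma~\ref{lem:etale} shows $\det(w_{d_k})=1$, so the stabilizer $\langle w_{d_k}\rangle$ (Remark~\ref{rem:stab-cyclique}) lies in $\G$, the quotient map is \'etale there, and the $A_j$ singularity of the weighted plane is transferred unchanged; and in the case $\G=W' \neq W^\slv$ (type $G_{28}$) one proves that $\ZC(f)/W' \to \ZC(f)/W^\slv$ is unramified in codimension $1$ --- finiteness of the ramification locus resting on Corollary~\ref{coro:droite contenue} and the ADE hypothesis on $\ZC(f)$ --- and applies Lemma~\ref{lem:revetement-ade}; finally the $G(2e,2e,4)$ families are reduced to $e=1$ via the isomorphisms~(\ref{eq:4e-1}),~(\ref{eq:4e-2}) and Lemma~\ref{lem:F-a1}. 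Note also that Proposition~\ref{prop:double cover}, which you propose as a main tool, is not how the paper proves the theorem: the branch-curve analysis is reserved for the explicit computations in the sequels. Deferring all of this with ``keep case-by-case to a minimum'' leaves the theorem unproved.

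Two further remarks. First, for the triviality of the canonical sheaf you must actually verify the well-formedness hypothesis (H3): the paper checks Iano-Fletcher's criteria against Table~\ref{table:eq-k3}, and without well-formedness the adjunction-type formula you invoke fails in weighted projective space; also, Corollary~\ref{coro:canonique} only asserts triviality under (H1)--(H5) when the degree sum equals the weight sum, not a general identification $\omega_S \cong \OC_S(\sigma)$. Second, on the positive side, your concluding step is correct and genuinely different from the paper's: you obtain $q=0$ from rationality of ADE singularities plus the cohomology of $0 \to \OC_{\PM^3}(-d) \to \OC_{\PM^3} \to \OC_{\ZC(f)} \to 0$, whereas the paper rules out the abelian-surface alternative by showing $\ZC(f)/\G$ has no odd cohomology (Dimca's theorem, rational smoothness and Poincar\'e duality) and hence positive Euler characteristic. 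Your route is arguably cleaner, but it cannot compensate for the missing proof of (S).
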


The proof of Theorem~\ref{theo:main} will be given
in~\S\ref{sub:proof} and Section~\ref{sec:singularites}. As an
immediate consequence, we get:

\begin{cor}\label{coro:main}
Under the hypotheses of Theorem~\ref{theo:main}, the minimal
resolution of $\ZC(f)/\G$ is a smooth projective K3 surface.
\end{cor}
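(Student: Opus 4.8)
The plan is to deduce the corollary from Theorem~\ref{theo:main} together with the classical local theory of ADE singularities; almost all of the work is already contained in the theorem. Set $S=\ZC(f)/\G$. Unravelling the definition, Theorem~\ref{theo:main} provides that $S$ is a normal projective surface with trivial dualizing sheaf $\omega_S\simeq\OC_S$, with $H^1(S,\OC_S)=0$, and with at worst ADE singularities. Let $\pi:\Xti\to S$ be the minimal resolution of singularities; as $S$ is a normal projective surface this exists and $\Xti$ is a smooth projective surface, so it remains only to verify that $\omega_{\Xti}$ is trivial and that $H^1(\Xti,\OC_{\Xti})=0$.

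First I would invoke that every ADE singularity is a rational double point, hence a canonical Gorenstein surface singularity. For such a singularity the minimal resolution is crepant: all discrepancies vanish and $\omega_{\Xti}\simeq\pi^*\omega_S$. Since $\omega_S\simeq\OC_S$, this gives $\omega_{\Xti}\simeq\OC_{\Xti}$, so $\Xti$ has trivial canonical bundle.

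Next I would use that rational double points are in particular rational singularities, so that $R^i\pi_*\OC_{\Xti}=0$ for $i>0$. The Leray spectral sequence for $\pi$ and $\OC_{\Xti}$ then yields $H^i(\Xti,\OC_{\Xti})\simeq H^i(S,\OC_S)$ for every $i$; in particular the irregularity $H^1(\Xti,\OC_{\Xti})\simeq H^1(S,\OC_S)$ vanishes.

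Combining the two points, $\Xti$ is a smooth projective surface with $\omega_{\Xti}\simeq\OC_{\Xti}$ and $H^1(\Xti,\OC_{\Xti})=0$, which is precisely the definition of a K3 surface. The only inputs beyond Theorem~\ref{theo:main} are the crepancy and the rationality of ADE singularities, both of which are classical; consequently there is no genuine obstacle in the corollary itself, the entire difficulty being already concentrated in the proof of Theorem~\ref{theo:main}.
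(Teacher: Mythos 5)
Your proof is correct in substance but takes a genuinely different route from the paper's. The paper treats the corollary as immediate because the concluding step of the proof of Theorem~\ref{theo:main} (in~\S\ref{sub:proof}) already argues on the minimal resolution $\Xti$: from~(S) and~(C) it deduces, as you do via crepancy, that $\Xti$ has trivial canonical sheaf, but it then invokes the Enriques--Kodaira classification (a smooth projective surface with trivial canonical \emph{sheaf} is K3 or abelian) and rules out the abelian case by positivity of the topological Euler characteristic --- step~(E), which the paper proves via vanishing of the odd-degree cohomology $\Hrm^1(\ZC(f)/\G,\CM)=\Hrm^3(\ZC(f)/\G,\CM)=0$. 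You instead verify the definition of a K3 surface directly, replacing classification-plus-Euler-characteristic by rationality of ADE singularities and the Leray spectral sequence to get $\Hrm^1(\Xti,\OC_{\Xti})\simeq \Hrm^1(\ZC(f)/\G,\OC_{\ZC(f)/\G})$. One caveat: you assert that Theorem~\ref{theo:main} ``provides'' $\Hrm^1(S,\OC_S)=0$ for $S=\ZC(f)/\G$, but neither the statement nor the paper's proof contains this literally; what is actually established is $\Hrm^1(S,\CM)=0$ together with the trivial canonical sheaf, not the vanishing of the coherent $\Hrm^1$. The gap is small and bridgeable within your framework: by rationality of the singularities $\Hrm^1(S,\OC_S)\simeq\Hrm^1(\Xti,\OC_{\Xti})$, and since the exceptional loci of an ADE resolution are trees of rational curves one has $b_1(\Xti)=b_1(S)=0$, whence $h^{0,1}(\Xti)=0$ by Hodge theory on the smooth surface $\Xti$ --- but this step should be said. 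In exchange, your argument avoids the surface classification entirely, while the paper's avoids any discussion of the irregularity, needing only the Euler characteristic it has already computed.
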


\subsection{Numerical informations}
Before proving this Theorem~\ref{theo:main}, let us make some
remarks. By Propositions~\ref{prop:zf-dw} and~\ref{prop:zf-wsl}, the
variety $\ZC(f)/\G$ is a weighted complete intersection
(see~\cite[\S{3.2}]{dolga_weighted} for the definition) in a
weighted projective space (it is defined by one or two equations).
If $\G=W^\slv$, then $\ZC(f)/\G$ is a weighted hypersurface in a
weighted projective space of dimension $3$ (see
Proposition~\ref{prop:zf-wsl}). If $\G=W'$, then $\ZC(f)/\G$ is a
codimension $2$ weighted complete intersection in a weighted
projective space of dimension $4$ (see
Proposition~\ref{prop:zf-dw}). We give in Table~\ref{table:eq-k3}
the list of the weights of the ambient projective space as well as
the list of the degrees of the equations in all the different cases
(we also give the description of $\ZC(f)/W$ as a weighted projective
space).

By looking at this Table~\ref{table:eq-k3}, the reader might think
that we have build infinitely many families of K3 surfaces, by
letting the integer $e$ vary in the fourth and fifth group
considered. However, as it will be explained
in~\S\ref{sub:imprimitif} (see the
isomorphisms~(\ref{eq:6e}),~(\ref{eq:4e-1}) and~(\ref{eq:4e-2})),
the general group with parameter $e$ and the particular group for
$e=1$ give exactly the same families of surfaces.

Also, it turns out that $G(2,2,4)'=G(2,1,4)'$, and since invariants
of $G(2,1,4)=\Wrm(B_4)$ are invariants for $G(2,2,4)=\Wrm(D_4)$,
this shows that two of the families of K3 surfaces constructed with
$G(2,1,4)$ are contained in families built from $G(2,2,4)$: note
however that, for these particular examples, having the two points
of view give different embeddings in weighted projective spaces.

As a consequence, we have build 15 families of K3 surfaces (note
that the families corresponding to the groups $G(4,2,4)$ and
$G_{29}$ are $0$-dimensional, as there is, up to scalar, a single
quartic polynomial invariant by each of these groups). If we exclude
the ``easy'' case of the quotient of a quartic by a finite subgroup
of $\Sb\Lb_4(\CM)$ (see~\S\ref{sub:d=4}), it remains $8$ non-zero
dimensional families of K3 surfaces whose construction is
non-trivial.

\begin{center}{\begin{table}
$$\begin{array}{!{\vline width 2pt} c !{\vline width 2pt} c|c|c|c|c!{\vline width 2pt}}
\hlinewd{2pt}
 W & d & \G &  \text{Ambient space}
 %\begin{array}{c}\text{Weighted} \\ \text{proj. space} \end{array}
 &  \begin{array}{c} \text{degree(s) of} \\ \text{equation(s)} \end{array} & \ZC(f)/W \\
\hlinewd{2pt} \petitespace \Wrm(A_4) \simeq \SG_5 & 4 & W'=W^\slv &
\PM(2,3,5,10) &
20 & \PM(2,3,5) \\
\hlinewd{0.8pt} \multirow{4}{*}{$\begin{array}{c} G(2,1,4) \\
=\Wrm(B_4)\end{array}$} &
\multirow{2}{*}{4} & W' & \petitespace \PM(1,3,4,6,2) & 12,4 & \multirow{2}{*}{$\PM(1,3,4)$}\\
\clinewd{0.1pt}{3-5} && W^\slv & \petitespace \PM(1,3,4,8) & 16 &\\
\clinewd{0.3pt}{2-6} & \multirow{2}{*}6 & W' & \petitespace
\PM(1,1,2,3,1) & 6,2
& \multirow{2}{*}{$\PM(1,1,2)$}\\
\clinewd{0.1pt}{3-5} && W^\slv & \petitespace \PM(1,1,2,4) & 8 & \\
\hlinewd{0.8pt} \petitespace \multirow{2}{*}{$G(4,2,4)$} &
\multirow{2}{*}{$4$}
& W' & \PM(2,3,2,1,6) & 2,12 & \multirow{2}{*}{$\PM(1,3,1)$} \\
\clinewd{0.1pt}{3-5} && W^\slv & \petitespace \PM(2,3,2,7) & 14 &\\
\hlinewd{0.8pt} \multirow{2}{*}{$\begin{array}{c} G(2e,2e,4) \\
\text{$e$ odd}\end{array}$} &
4e & W'=W^\slv & \petitespace \PM(1,3,2,6) & 12 & \PM(1,3,2) \\
\clinewd{0.3pt}{2-6}
& 6e & W'=W^\slv & \petitespace \PM(1,1,1,3) & 6 & \PM(1,1,1) \\
\hlinewd{0.8pt} G(4e,4e,4) &
4e & W'=W^\slv & \petitespace \PM(2,3,1,6) & 12 & \PM(2,3,1) \\
\hlinewd{0.8pt} \multirow{4}{*}{$\begin{array}{c} G_{28} \\
=\Wrm(F_4)\end{array}$} &
\multirow{2}{*}{6} & W' & \petitespace \PM(1,2,3,3,3) & 6,6 & \multirow{2}{*}{$\PM(1,2,3)$}\\
\clinewd{0.1pt}{3-5} && W^\slv & \petitespace \PM(1,2,3,6) & 12 & \\
\clinewd{0.3pt}{2-6} & \multirow{2}{*}8 & W' & \petitespace
\PM(1,1,2,2,2) & 4,4
& \multirow{2}{*}{$\PM(1,1,2)$}\\
\clinewd{0.1pt}{3-5} && W^\slv & \petitespace \PM(1,1,2,4) & 8 & \\
\hlinewd{0.8pt}
G_{29} & 4 & W'=W^\slv & \petitespace \PM(2,3,5,10) & 20 & \PM(2,3,5)\\
\hlinewd{0.8pt}
G_{30}=\Wrm(H_4) & 12 & W'=W^\slv & \petitespace \PM(1,2,3,6) & 12 & \PM(1,2,3)\\
\hlinewd{0.8pt}
G_{31} & 20 & W'=W^\slv & \petitespace \PM(2,1,2,5) & 10 & \PM(1,1,1)\\
\hlinewd{2pt}
\end{array}
$$
\refstepcounter{theorem} \caption{Weights of ambient projective
spaces and degree(s) of equation(s) of
$\ZC(f)/\G$}\label{table:eq-k3}
\end{table}}\end{center}

\boitegrise{{\bf Hypothesis and notation.} {\it From now on, and
until the end of this paper, we assume that $(W,d) \in \KC_3$, that
$\G$ is the subgroup $W'$ or $W^\slv$ of $W$, and that $f$ is a
fundamental invariant of $W$ of degree $d$ such that $\ZC(f)$ has
only ADE singularities. We also fix a family $\fb=(f,f_1,f_2,f_3)$
of fundamental invariants containing $f$ and we set
$d_i=\deg(f_i)$.}}{0.8\textwidth}

\begin{proof}[Proof of the results given in Table~\ref{table:eq-k3}]
The proof follows from Table~\ref{table:degres} and a case-by-case
analysis. We will not give details for all cases, we will only treat
two cases (the reader can easily check that all other cases can be
treated similarly).

\medskip

$\bullet$ Assume that $(W,d)=(G_{31},20)$ and that $\G=W'$
($=W^\slv$). Then $(d_1,d_2,d_3)=(8,12,24)$ by
Table~\ref{table:degres} and $|\AC|=60$ by~(\ref{eq:degres})
and~(\ref{eq:ref-hyp}). It then follows from
Proposition~\ref{prop:zf-wsl} that
$$\ZC(f)/\G=
\{[x_1:x_2:x_3:j] \in
\PM(8,12,24,60)~|~j^2=P_\fb(0,x_1,x_2,x_3)\}.$$ But
$\PM(8,12,24,60)=\PM(2,3,6,15)=\PM(2,1,2,5)$, so that
$$\ZC(f)/\G = \{[y_1:y_2:y_3:j] \in \PM(2,1,2,5)~|~j^2 =
Q(y_1,y_2,y_3)\}$$ for some polynomial $Q \in \CM[Y_1,Y_2,Y_3]$.
Hence $\ZC(f)/\G$ is defined by an equation of degree $10$ in
$\PM(2,1,2,5)$, as expected.

Finally, by Proposition~\ref{prop:zf-wsl}, $\ZC(f)/W \simeq
\PM(8,12,24) =\PM(2,3,6) \simeq \PM(2,1,2) \simeq \PM(1,1,1)$, as
expected.

\medskip

$\bullet$ Assume that $(W,d)=(G_{28},6)$ and $\G=W'$. Then
$(d_1,d_2,d_3)=(2,8,12)$ by Table~\ref{table:degres}. Note that $W'
\neq W^\slv$ and that there are two $W$-orbits $\O_1$ and $\O_2$ of
reflecting hyperplanes, which are both of cardinality $12$. It then
follows from Proposition~\ref{prop:zf-dw}, that
$$\ZC(f)/\G=\{[x_1:x_2:x_3:j_1:j_2] \in \PM(2,8,12,12,12)~|~
j_1^2=P_{\fb,\O_1}(0,x_1,x_2,x_3)$$
$$\hphantom{AAAAAAAAAAAAAAAA}~\text{and}~j_2^2=P_{\fb,\O_2}(0,x_1,x_2,x_3)\}.$$
But $\PM(2,8,12,12,12)=\PM(1,4,6,6,6)=\PM(1,2,3,3,3)$, so that
$$\ZC(f)/\G=\{[y_1:y_2:y_3:j_1:j_2] \in \PM(1,2,3,3,3)~|~
j_1^2=Q_{1}(y_1,y_2,y_3)~\text{and}~j_2^2=Q_{2}(y_1,y_3,y_4)\}$$ for
some polynomials $Q_1$ and $Q_2$ in $\CM[Y_1,Y_2,Y_3]$. So
$\ZC(f)/\G$ is defined by two equations of degree $6$ in
$\PM(1,2,3,3,3)$, as expected.

Finally, by Proposition~\ref{prop:zf-dw}, $\ZC(f)/W \simeq
\PM(2,8,12) =\PM(1,4,6) \simeq \PM(1,2,3)$, as expected.
%
% \medskip
%
% $\bullet$ Assume that $(W,d)=(G(2e,2e,4),4e)$ with $e$ odd
% and that $\G=W'$ ($=W^\slv$).
% Then $(d_1,d_2,d_3)=(2e,6e,4)$ by Table~\ref{table:degres} and
% $|\AC|=12e$ by~(\ref{eq:degres}) and~(\ref{eq:ref-hyp}).
% It then follows from
% Proposition~\ref{prop:zf-wsl} that
% $$\ZC(f)/\G=
% \{[x_1:x_2:x_3:j] \in \PM(2e,6e,4,12e)~|~j^2=P_\fb(0,x_1,x_2,x_3)\}.$$
% But $\PM(2e,6e,4,12e)=\PM(e,3e,2,6e)=\PM(1,3,2,6)$ because $e$ is odd,
% so there exists
% a polynomial $Q \in \CM[Y_1,Y_2,Y_3]$ which is homogeneous
% of degree $12$ (if we assign to $Y_1$, $Y_2$ and $Y_3$ the degrees
% $1$, $3$ and $2$ respectively) and such that
% $$\ZC(f)/\G = \{[y_1:y_2:y_3:j] \in \PM(1,3,2,6)~|~j^2 =
% Q(y_1,y_2,y_3)\}.$$
% Hence $\ZC(f)/\G$ is defined by an equation of degree $12$
% in $\PM(1,3,2,6)$, as expected.
%
% Finally, by Proposition~\ref{prop:zf-wsl},
% $\ZC(f)/W \simeq \PM(2e,6e,4) =\PM(e,3e,2) \simeq \PM(1,3,2)$,
% as expected.
\end{proof}

\begin{remark}\label{rem:2}
The arithmetic of degrees and the classification of reflection
groups imply that it does not seem possible to find a complex
reflection $W$ and a degree $d$ of $W$ such that $W$ is not
generated by reflections of order $2$ and $\ZC(f)/\G$ has a trivial
canonical sheaf, except whenever $d=4$. But this is in some sense
the less exciting case, as it is shown by the argument given
in~\S\ref{sub:d=4} below.

Also, note that if $e \not\in \{1,2,4\}$, then $G(e,e,4)$ has a
unique invariant of degree $4$ that defines a quartic in
$\PM^3(\CM)$, but this invariant is equal to $xyzt$, and so $\ZC(f)$
is not irreducible and does not fulfill the hypothesis of
Theorem~\ref{theo:main}. That is why this case does not appear in
the list $\KC_3$.
\end{remark}

\begin{remark}\label{rem:barth-sarti}
If $(W,d)=(G_{28},6)$ or $(G_{30},12)$, and $\G=W'$, then the above
result was obtained by Barth and the second
author~\cite{barth-sarti}: the group $\G$ was denoted by $G_d$ in
their paper (this must not be confused with Shephard-Todd
notation).
\end{remark}

\subsection{About the families attached to
${\boldsymbol{G(2e,2e,4)}}$}\label{sub:imprimitif} Assume in this
subsection, and only in this subsection, that $W=G(2e,2e,4)$ for
some $e$. Recall that $G(2e,2e,4)$ is the group of monomial matrices
in $\Gb\Lb_4(\CM)$ with coefficients in $\mub_{2e}$ and such that
the product of the non-zero coefficients is equal to $1$. Note that
this implies that $W'=W^\slv$.

If $1 \le k \le 4$, we denote by $\s_k$ the $j$-th elementary
symmetric function in the variables $x$, $y$, $z$, $t$, and let
$$J_1=(x-y)(x-z)(x-t)(y-z)(y-t)(z-t).$$
If $p \in \CM[x,y,z,t]$ and $l \ge 1$ is an integer, we set
$p[l]=p(x^l,y^l,z^l,t^l) \in \CM[x,y,z,t]$. For instance,
$\s_1[l]=\Sigma(x^l)$. Then $(\s_1,\s_1[2],\s_1[3],\s_4)$ is a
family of fundamental invariants of $G(1,1,4) \simeq \SG_4$. So
there exists a unique polynomial $P \in \CM[x_1,x_2,x_3,x_4]$ such
that \equat\label{eq:j-sn} J_1^2=P(\s_1,\s_1[2],\s_1[3],\s_4).
\endequat
We do not need here the explicit form of $P$. If $a$, $b$, $c \in
\CM$, we set
$$F_{a,b,c}=a \s_1[4] + b \s_4 + c \s_1[2]^2
= a\Sigma(x^4) + b xyzt + c (\Sigma(x^2))^2,$$
$$G_{a,b,c}=\s_1[6]+a\s_1[4]\s_1[2]+b\s_1[2]^3+c\s_1[2]\s_4.$$
%=\Sigma(x^6)+a\Sigma(x^4)\Sigma(x^2)+b\Sigma(x^2)^3+c\Sigma(x^2)xyzt$$
%and let
%$$J_1=(x-y)(x-z)(x-t)(y-z)(y-t)(z-t).$$
With this notation, we may, and we will, choose as a family of
fundamental invariants of $W$ the family
$(\s_1[2e],\s_1[4e],\s_1[6e],\s_4)$. Note that the element $J \in
\CM[x,y,z,t]^{W'}$ defined in~\S\ref{sub:reflecting} is equal to
$J_1[2e]$ (up to a scalar). Applying the endomorphism $p \mapsto
p[2e]$ of $\CM[x,y,z,t]$ to the formula~(\ref{eq:j-sn}) and using
Proposition~\ref{prop:zf-wsl} gives \equat\label{eq:pv-g2e}
\PM(V)/W' \!\simeq\! \{[x_1\!:\!x_2\!:\!x_3\!:\!x_4\!:\!j] \in
\PM(2e,4e,6e,4,12e)| j^2\!=\!P(x_1,x_2,x_3,x_4^{2e})\},
\endequat
because $\s_4[2e]=\s_4^{2e}$. Let us examine the situation according
to the parity of $e$.

\subsubsection{The case where $e$ is odd}
Assume here that $e$ is odd. Then
$$\PM(2e,4e,6e,4,12e)=\PM(e,2e,3e,2,6e)\simeq \PM(1,2,3,2,6).$$
So it follows from~(\ref{eq:pv-g2e}) that \equat\label{eq:pv-g2e-impair}
\PM(V)/W' \simeq \{[x_1:x_2:x_3:x_4:j] \in \PM(1,2,3,2,6)~|~
j^2=P(x_1,x_2,x_3,x_4^2)\}.
\endequat
Now, a fundamental invariant of degree $6e$ of $W$ if of the form
$G_{a,b,c}[e]$. We deduce from~(\ref{eq:pv-g2e-impair}) that
$$\ZC(G_{a,b,c}[e])/W' \simeq \{[x_1:x_2:x_4:j] \in \PM(1,2,2,6)~|~
j^2=P(x_1,x_2,-ax_1x_2-bx_1^3-cx_1x_4,x_4^2)\}$$ and in particular
\equat\label{eq:6e}
\ZC(G_{a,b,c}[e])/G(2e,2e,4)'=\ZC(G_{a,b,c})/G(2,2,4)'.
\endequat
Similarly, a fundamental invariant of degree $4e$ of $W$ is of the
form $F_{a,b,c}[e]$ with $(a,b) \neq 0$ and \equat\label{eq:4e-1}
\ZC(F_{a,b,c}[e])/G(2e,2e,4)'=\ZC(F_{a,b,c})/G(2,2,4)'.
\endequat
This shows that the varieties $\ZC(G_{a,b,c}[e])/G(2e,2e,4)'$ and
$\ZC(F_{a,b,c}[e])/G(2e,2e,4)'$ do not depend on $e$.

\subsubsection{The case where $e$ is even}
Assume here that $e=2e'$ for some $e' \ge 1$. Then
$$\PM(2e,4e,6e,4,12e)=\PM(e',2e',3e',1,6e') = \PM(1,2,3,1,6).$$
So it follows from~(\ref{eq:pv-g2e}) that \equat\label{eq:pv-g2e-pair}
\PM(V)/W' \simeq \{[x_1:x_2:x_3:x_4:j] \in \PM(1,2,3,1,6)~|~
j^2=P(x_1,x_2,x_3,x_4^4)\}.
\endequat
Now, a fundamental invariant of $G(4e',4e',4)$ of degree $4e'$ is of
the form $F_{a,b,c}[e']$ and a similar argument as before shows that
\equat\label{eq:4e-2}
\ZC(F_{a,b,c}[e'])/G(4e',4e',4)'=\ZC(F_{a,b,c})/G(4,4,4)'.
\endequat
Again, the variety $\ZC(F_{a,b,c}[e'])/G(4e',4e',4)'$ does not
depend on $e'$.

\subsubsection{Complements}
Note for future reference (see~\S\ref{sub:g2e}) the following fact:

\begin{lemma}\label{lem:F-a1}
If $\ZC(F_{a,b,c})$ is irreducible, then it is smooth or has only
$A_1$ singularities.
\end{lemma}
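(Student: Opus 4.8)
The plan is to analyze directly the critical locus of $F=F_{a,b,c}$ in $\PM(V)\simeq\PM^3$. First I would record the gradient: writing $q=x^2+y^2+z^2+t^2$, one has
$$\partial_x F = 4ax^3+byzt+4cxq,$$
together with the three analogous expressions obtained by permuting $x,y,z,t$. A point is singular on $\ZC(F)$ exactly when these four polynomials vanish (and then $F$ vanishes as well, by Euler's identity). The key observation is that the pairwise differences factor: for instance
$$\partial_xF-\partial_yF=(x-y)\bigl[4a(x^2+xy+y^2)-bzt+4cq\bigr],$$
and similarly for every pair of coordinates. Thus at a singular point, for each pair of coordinates either the two coincide or the corresponding quadratic bracket vanishes.

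Next I would run a case analysis according to the partition of $\{x,y,z,t\}$ into blocks of equal coordinates, exploiting the fact that $F_{a,b,c}$ is invariant under $\Wrm(D_4)=G(2,2,4)$ — which contains the permutations $\SG_4$ and all even sign changes — to reduce to a small number of representative patterns (all coordinates distinct; exactly one coincidence; two disjoint coincidences; three equal; all equal; together with the sub-cases in which some coordinates vanish). In each stratum the factorizations above collapse the gradient system to a small polynomial system in one or two variables, whose solutions give finitely many $\Wrm(D_4)$-orbits of candidate singular points, the solution set depending on $a,b,c$.

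For each resulting singular point $z$ I would then compute the Hessian
$$H=\operatorname{diag}(12ax_i^2)+4cq\,I+8c\,\bar v\bar v^{T}+bB,$$
where $\bar v=(x,y,z,t)^{T}$ and $B_{ij}$ is, for $i\neq j$, the product of the two complementary coordinates (and $B_{ii}=0$), and show that its rank equals $3$; equivalently, that the projective tangent cone at $z$ is a smooth conic. A hypersurface singularity whose quadratic part has maximal rank is an $A_1$ singularity, which gives the conclusion. The irreducibility hypothesis enters precisely to exclude the degenerate configurations in which the singular locus becomes one-dimensional or a point acquires multiplicity $>2$: in each such case one shows that $F_{a,b,c}$ must factor (typically splitting off a plane, or becoming a perfect square or a product of two quadrics), contradicting irreducibility.

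The main obstacle is the bookkeeping of this case analysis together with the special parameter loci — such as $b=4a$, $a=0$, $c=0$, $a+c=0$ or $4a+b+16c=0$ — where extra coincidences among coordinates appear or where coordinate points land on the surface. For each of these one must check that either the Hessian remains of rank $3$ (so the singularity is still $A_1$) or the extra degeneration forces $\ZC(F_{a,b,c})$ to be reducible, hence is excluded by hypothesis. Certifying the rank-$3$ property as uniformly as possible across the strata, rather than point by point, is where the structured form of $H$ above is most useful.
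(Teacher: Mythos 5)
Your skeleton rests on the same two pillars as the paper's proof: the Morse criterion (a surface point whose local equation has quadratic part of rank $3$ is an $A_1$ point; and since $F$ is homogeneous with $\nabla F(v)=0$, Euler's relation gives $H(v)\,v=0$, so rank $3$ of your $4\times 4$ Hessian is indeed the right condition), and the principle that parameters carrying anything worse must be killed by irreducibility. The execution, however, diverges completely. The paper never classifies the singular points: after disposing of $a=0$ by hand (irreducibility forces $b,c\neq 0$, the singular locus is the $G(2,2,4)$-orbit of $[0:0:i:1]$, and the quadratic part there has rank $3$), it normalizes $a=1$, forms the incidence variety $\XC\subset\AM^5$ cut out in the chart $t=1$ by $F^\circ_{b,c}=\partial F^\circ_{b,c}/\partial x=\partial F^\circ_{b,c}/\partial y=\partial F^\circ_{b,c}/\partial z=\det H_{b,c}=0$, and computes by Gr\"obner elimination (\textsc{Magma}) the image of $\XC$ in the $(b,c)$-plane: it consists of the two points $(\pm 4,-1/2)$, and $F_{1,4,-1/2}$, $F_{1,-4,-1/2}$ are divisible by $x-y-z+t$ and $x+y+z-t$ respectively, hence reducible. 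One elimination replaces your entire stratification.

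The genuine gap in your plan is that its decisive content is deferred and cannot be supplied by the qualitative remarks you make. First, your stratification does not close up: on the stratum where all coordinates are distinct, all six brackets $4a(x_i^2+x_ix_j+x_j^2)-bx_kx_l+4cq$ must vanish simultaneously, a quadratic system whose solution set moves with $(a,b,c)$; the factorization of gradient differences does not collapse it, and you offer no mechanism for certifying rank $3$ of $H$ uniformly there. Second, your fallback --- that any configuration with degenerate Hessian or positive-dimensional singular locus ``forces $F_{a,b,c}$ to factor'' --- is not a general principle: irreducible quartics with non-$A_1$, even non-isolated, singularities exist in abundance (e.g.\ quartics singular along a conic), so reducibility at the degenerate parameters is a fact specific to this three-parameter family that must be computed, not inferred. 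Indeed your guessed special loci are off: after rescaling, the only parameters admitting a singular point with degenerate quadratic part are $(b,c)=(\pm 4a,-a/2)$; loci such as $c=0$ or $4a+b+16c=0$ may change the number or position of the nodes but not their type. If you wish to keep your route, the honest way to complete it is precisely the paper's step: eliminate $(x,y,z)$ from the system augmented by $\det H=0$ and check reducibility at the finitely many bad parameter values that survive.
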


\begin{proof}
Assume that $\ZC(F_{a,b,c})$ is irreducible and singular. Let us
first assume that $a = 0$. Then we may assume that $b=1$ and the
irreducibility of $\ZC(F_{0,1,c})$ forces $c \neq 0$. An easy
computation then shows that the only singular points of
$\ZC(F_{0,1,c})$ are the ones belonging to the $G(2,2,4)$-orbit of
$p=[0 : 0 : i : 1]$. But the homogeneous component of degree $2$ of
$F_{0,1,c}(x,y,i+z,1)$ is $ixy-4z^2$, which is a non-degenerate
quadratic form in $x$, $y$, $z$. So $p$ is an $A_1$ singularity of
$\ZC(F)$, as expected.

\smallskip

Le us now assume that $a \neq 0$, and even that $a=1$. Assume that
we have found $(b_0,c_0) \in \CM^2$ such that $\ZC(F_{1,b_0,c_0})$
admits a singular point $q=[x_0 : y_0 : z_0 : t_0]$ which is not an
$A_1$ singularity. Then, by permuting the coordinates if necessary,
we may assume that $t_0 \neq 0$. So let us work in the affine chart
$t \neq 0$ and set $F_{b,c}^\circ=F_{1,b,c}(x,y,z,1)$. Let $H_{b,c}$
denote the Hessian matrix of $F_{b,c}^\circ$. Then
$(x_0,y_0,z_0,b_0,c_0)$ belongs to the variety
$$\XC=\{(x,y,z,b,c) \in \AM^5(\CM)~|~
F^\circ_{b,c}(x,y,z)= \frac{\partial F^\circ_{b,c}}{\partial
x}(x,y,z)=$$
$$\frac{\partial F^\circ_{b,c}}{\partial y}(x,y,z)=
\frac{\partial F^\circ_{b,c}}{\partial z}(x,y,z)=
\det(H_{b,c}(x,y,z))=0\}.
$$
Now let $\pi : \AM^5(\CM) \to \AM^2(\CM)$, $(x,y,z,\beta,\g) \mapsto
(\beta,\g)$.
A {\sc Magma}~\cite{magma} %\todo{Peut-on l'eviter ?}
computation:
\begin{quotation}{\small\begin{verbatim}
> A5<x,y,z,b,c>:=AffineSpace(Rationals(),5);
> A2<b0,c0>:=AffineSpace(Rationals(),2);
> pi:=map<A5 -> A2 | [b,c]>;
> Fbco:=(x^4+y^4+z^4+1)+b*x*y*z+c*(x^2+y^2+z^2+1)^2;
> Hbc:=Matrix(CoordinateRing(A5),3,3,
>   [[Derivative(Derivative(Fbco,k),l) :
>     k in [1..3]] : l in [1..3]]);
> X:=Scheme(A5,[Fbco] cat [Derivative(Fbco,k) : k in [1,2,3]]
>   cat [Determinant(Hbc)]);
> MinimalBasis(ReducedSubscheme(pi(X)));
[
    c0 + 1/2,
    b0^2 - 16
]
\end{verbatim}
}\end{quotation}
shows that $\pi(\XC)=\{(4,-1/2),(-4,-1/2)\}$. This implies that $(b_0,c_0) \in
\{(4,-1/2),(-4,-1/2)\}$. But $F_{4,-1/2}$ and $F_{-4,-1/2}$ are not
irreducible (they are divisible by $x-y-z+t$ and $x+y+z-t$
respectively): this contradicts the hypothesis.
\end{proof}

\begin{remark}
Observe that the previous family $\ZC(F_{a,b,c}$) with $(a,b)=(1,0)$
was studied in \cite{sarti_comm}, where it is shown that the family
contains exactly four singular surfaces with $4, 8, 12, 16$
$A_1$--singularities. In particular the surface with $16$
$A_1$--singularities is a Kummer K3 surface.
\end{remark}

\subsection{Proof of Theorem~\ref{theo:main}}\label{sub:proof}
The rest of this paper is devoted to the proof of this
Theorem~\ref{theo:main}. Note that it proceeds by a case-by-case
analysis, but this case-by-case analysis is widely simplified by the
general facts about complex reflection groups recalled in the
previous sections.

\begin{proof}[Proof of Theorem~\ref{theo:main}]
Assume that the hypotheses of Theorem~\ref{theo:main} are satisfied.
For proving that $\ZC(f)/\G$ is a K3 surface with ADE singularities,
we need to show the following facts:
\begin{itemize}
\itemth{S} The surface $\ZC(f)/\G$ has only ADE singularities.

\itemth{E} The Euler characteristic of $\ZC(f)/\G$ is positive.

\itemth{C} The canonical divisor of $\ZC(f)/\G$ is trivial.
\end{itemize}
Indeed, if $\Xti$ denotes the minimal resolution of $\ZC(f)/\G$ and
if~(S),~(E) and~(C) are proved, then $\Xti$ has a trivial canonical
sheaf by~(S) and~(C), so by the classification of smooth algebraic
surfaces $\Xti$ is a K3 surface or an abelian surface. But, by~(S),
the Euler characteristic of $\Xti$ is greater than or equal to the
one of $\ZC(f)/\G$, so is also positive by~(E). Since the Euler
characteristic of an abelian surface is $0$, we deduce that $\Xti$
is a smooth K3 surface.

\medskip

The technical step is to prove~(S), namely that $\ZC(f)/\G$ has only
ADE singularities. This will be postponed to the next
Section~\ref{sec:singularites}. So assume here that~(S) is proved.

Let us now prove the statement~(E), namely that the Euler
characteristic of $\ZC(f)/\G$ is positive. Since $\ZC(f)$ has only
isolated singularities by~(S), it follows
from~\cite[Theorem~4.3]{dimca} that $\Hrm^1(\ZC(f),\CM)=0$. Since
$\ZC(f)$ has only ADE singularities, it is rationally
smooth~\cite[Definition~A1]{KL}. As it is also projective, one can
apply Poincar\'e duality and so $\Hrm^3(\ZC(f),\CM)$ is the dual of
$\Hrm^1(\ZC(f),\CM)$, hence is equal to $0$. So $\ZC(f)$ has no odd
cohomology and since $\Hrm^j(\ZC(f)/\G,\CM)=\Hrm^j(\ZC(f),\CM)^\G$,
this shows that $\ZC(f)/\G$ has no odd cohomology. So its Euler
characteristic is positive.

Now it remains to prove~(C), namely that the canonical sheaf of
$X=\ZC(f)/\G$ is trivial. For this, we use
Corollary~\ref{coro:canonique}, so we need to prove that $X$
satisfies the hypotheses~(H1),~(H2),~(H3),~(H4) and~(H5) of
Appendix~\ref{app:2-forme}. Statements~(H1),~(H2) and~(H4) are
easily checked thanks to Table~\ref{table:eq-k3} while~(H5) follows
from~(S). So it remains to prove~(H3), namely that $X$ is a
well-formed weighted complete intersection. There are two cases:
\begin{itemize}
\item[$\bullet$] If $\G=W^\slv$, then $X$ is a weighted
hypersurface of degree $e$ in some $\PM(l_0,l_1,l_2,l_3)$, and,
according to~\cite[\S{6.10}]{fletcher}, $X$ is well-formed if, for
all $0 \le a < b \le 3$, $\gcd(l_a,l_b)$ divides $e$. This is easily
checked with Table~\ref{table:eq-k3}.

\item[$\bullet$] If $\G=W'\neq W^\slv$, then $X$ is a weighted
complete intersection defined by two equations of degree $e_1$ and
$e_2$ in some $\PM(l_0,l_1,l_2,l_3,l_4)$, and, according
to~\cite[\S{6.11}]{fletcher}, $X$ is well-formed if the following
two properties are satisfied:
\begin{itemize}
\item For all $0 \le a < b \le 4$, $\gcd(l_a,l_b)$ divides $e_1$ or $e_2$.

\item For all $0 \le a < b < c \le 4$, $\gcd(l_a,l_b,l_c)$ divides $e_1$ and $e_2$.
\end{itemize}
Again, this is easily checked with Table~\ref{table:eq-k3}.
\end{itemize}
The proof of Theorem~\ref{theo:main} is complete, up to the proof
of~(S).
\end{proof}

%%%%%%%%%%%%%%%%%%%%%%%%%%%%%%%%%%%%%%%%%%%%%
\section{Singularities of $\ZC(f)/\G$}\label{sec:singularites}
%%%%%%%%%%%%%%%%%%%%%%%%%%%%%%%%%%%

The aim of this section is to complete the proof of
Theorem~\ref{theo:main}, by proving that, under its hypotheses, the
surface $\ZC(f)/\G$ has only ADE singularities. The first subsection
is devoted to the (easy) case where $d=4$ while the third subsection
deals with the case where $(W,d)=(G(2e,2e,4),4e)$ for some odd $e$
by reduction to the $d=4$ case. The other cases are treated in the
second and fourth section.

\subsection{The case ${\boldsymbol{d=4}}$}\label{sub:d=4}
%\todo{J'aimerais que tu reprennes
%cette sous-section : je l'ai ecrite avec mes mots a moi, mais je
%pense que tu pourrais etre plus precise, plus efficace. Je pense
%d'ailleurs que l'on pourrait se contenter d'une reference a
%un endroit ou c'est fait...}
Assume in this subsection, and only in this subsection, that $d=4$.
This case is somewhat particular and requires its own treatment. It
is also well known in the literature, but we recall the discussion
for convenience of the reader. First, note that the hypothesis
implies that $\ZC(f)$ is already a K3 surface (with eventually ADE
singularities) and we denote by $\omega$ a non-degenerate global
holomorphic $2$-form on the smooth locus (it is well-defined up to a
scalar). By hypothesis, $\G \subset \Sb\Lb_4(\CM)$, so $\G$
preserves $\omega$. So $\ZC(f)/\G$ inherits a non-degenerate global
holomorphic $2$-form $\omega_\G$ on its smooth locus.

Now, let $p : X \to \ZC(f)$ denote a minimal resolution of $\ZC(f)$,
and let $\omega_X$ denote the unique non-degenerate $2$-form on $X$
extending $\omega$. Then $X$ is a K3 surface which inherits an
action of $\G$ which stabilizes $\omega$: so this is a symplectic
action and so $X/\G$ is a K3 surface with ADE singularities
\cite[\S{5}]{nikulin1}. Let $q : Y \to X/\G$ denote a minimal
resolution. We have a commutative diagram
$$\xymatrix{
& X \ar[r]^{\DS{p}} \ar[d] & \ZC(f) \ar[d] \\
Y \ar[r]^{\DS{q}} & X/\G \ar[r]^{\DS{p_\G}} & \ZC(f)/\G, }$$ where
$p_\G$ is induced by $p$. This shows that $p_\G \circ q : Y \to
\ZC(f)/\G$ is a symplectic resolution, so $\ZC(f)/\G$ is a K3
surface with ADE singularities (and $Y$ is its minimal resolution).
This completes the proof of Theorem~\ref{theo:main} whenever $d=4$.

\subsection{The case where ${\boldsymbol{\ZC(f)/W}}$ is smooth}\label{sub:smooth}
Assume in this subsection, and only in this subsection, that
$\ZC(f)/W$ is smooth. By examining Table~\ref{table:eq-k3}, this
occurs only if $(W,d)=(G(2e,2e,4),6e)$ or $(G_{31},20)$. In both
cases, $\G=W'=W^\slv$ is of index $2$ in $W$, and so the result
follows from Corollary~\ref{coro:ade}.

\subsection{The case where ${\boldsymbol{W=G(2e,2e,4)}}$}\label{sub:g2e}
Assume in this subsection, and only in this subsection, that
$W=G(2e,2e,4)$. The case where $e$ is odd and $d=6e$ is treated in
the previous subsection~\ref{sub:smooth}. If $e$ is odd and $d=4e$,
then it follows from the isomorphism~(\ref{eq:4e-1}) and
Lemma~\ref{lem:F-a1} that we may assume that $e=1$. Then $d=4$ and
this case is treated in~\S\ref{sub:d=4}. If $e=2e'$ is even and
$d=4e'$, then it follows from the isomorphism~(\ref{eq:4e-1}) and
Lemma~\ref{lem:F-a1} that we may assume that $e'=1$. Then $d=4$ and
this case is treated in~\S\ref{sub:d=4}.

\subsection{Remaining cases}
According to the cases treated
in~\S\ref{sub:d=4},~\S\ref{sub:smooth} and~\S\ref{sub:g2e}, we may
now work under the following hypothesis:

\medskip

\boitegrise{{\bf Hypothesis.} {\it From now on, and until the end of
this section, we assume that $d \neq 4$ and $W \in
\{G(2,1,4),G_{28},G_{30}\}$.}}{0.8\textwidth}

% \subsection{Type ${\boldsymbol{G_{28}}}$}\label{sub:g28}
% Assume in this subsection, and only in this subsection, that
% $W=G_{28}=\Wrm(F_4)$. Then $d \in \{6,8\}$. If $\G=W^\slv$,
% then this case could be treated by similar methods as the
% ones presented below. However, the case where $\G=W'$
% (and especially the two-dimensional family corresponding
% to $d=8$) deserves a particular treatment: for this reason,
% we postpone the proof of Theorem~\ref{theo:main} in this case
% to a sequel to this paper~\cite{bonnafe-sarti-3} (and we will investigate
% some other features of this particularly interesting case:
% Picard numbers, transcendental lattices,\dots).
%

\subsubsection{The case where $\G=W^\slv$}\label{sub:slv}
Assume in this subsection, and only in this subsection, that
$\G=W^\slv$. In this case, $\G$ is a subgroup of index $2$ of $W$.
Recall from Propositions~\ref{prop:projectif-a-poids}
and~\ref{prop:zf-wsl} that \equat\label{eq:zf-w} \ZC(f)/W \simeq
\PM(d_1,d_2,d_3)
\endequat
and \equat\label{eq:zf-wsl} \ZC(f)/W^\slv \simeq \{[x_1:x_2:x_3:j]
\in \PM(d_1,d_2,d_3,|\AC|)~|~ j^2 = P_\fb(0,x_1,x_2,x_3)\}.
\endequat
% Recall also from~(\ref{eq:degres}) and~(\ref{eq:ref-hyp})
% that
% \equat\label{eq:a}
% |\AC|=d+d_1+d_2+d_3-4.
% \endequat
We denote by $\r : \ZC(f)/W^\slv \longto \ZC(f)/W$ the canonical
map, let $\UC$ denote the smooth locus of $\PM(d_1,d_2,d_3)$ and let
$\SC$ denote the set of singular points of $\PM(d_1,d_2,d_3)$.

Now, $\UC=\pi_\fb^{-1}(\UC)/W$ is smooth so
$\r^{-1}(\UC)=\pi_\fb^{-1}(\UC)/\G$ contains only ADE singularities
by Corollary~\ref{coro:ade} (because $\G=W^\slv$ has index $2$ in
$W$). Hence, it remains to show that the points in $\r^{-1}(\SC)$
are smooth or ADE singularities. Let $p_1=[1:0:0]$, $p_2 = [0:1:0]$
and $p_3=[0:0:1]$ in $\PM(d_1,d_2,d_3)$. Then $\SC \subset
\{p_1,p_2,p_3\}$. The following fact is checked by a case-by-case
analysis:

\begin{lemma}\label{lem:etale}
Assume that $d \neq 4$ and $W \in \{G(2,1,4),G_{28},G_{30}\}$. If
$p_k \in \PM(d_1,d_2,d_3)$ is singular, then:
\begin{itemize}
\itemth{a} $\d(d_k)=\d^*(d_k)=1$.

\itemth{b} $\det(w_{d_k})=1$.

\itemth{c} $p_k$ is an $A_j$ singularity of $\PM(d_1,d_2,d_3)$
for some $j \ge 1$.
\end{itemize}
\end{lemma}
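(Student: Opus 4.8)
The plan is to prove the lemma by running through the finitely many pairs $(W,d)$ left open by the standing hypotheses. Since $d\neq 4$ and $W\in\{G(2,1,4),G_{28},G_{30}\}$, inspection of the definition of $\KC_3$ leaves exactly the four pairs $(G(2,1,4),6)$, $(G_{28},6)$, $(G_{28},8)$ and $(G_{30},12)$. Table~\ref{table:degres} then supplies the corresponding triples $(d_1,d_2,d_3)$, namely $(2,4,8)$, $(2,8,12)$, $(2,6,12)$ and $(2,20,30)$. In each case $\gcd(d_1,d_2,d_3)=2$, so I would first divide the three weights by $2$ and then pass to the well-formed model of $\PM(d_1,d_2,d_3)$; these well-formed models are precisely the spaces $\PM(1,1,2)$ and $\PM(1,2,3)$ recorded in the last column of Table~\ref{table:eq-k3}.

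For part (c) I would analyse $\PM(d_1,d_2,d_3)$ locally. Its singular locus is contained in $\{p_1,p_2,p_3\}$, and the germ at $p_k$ is the cyclic quotient singularity $\frac{1}{d_k}(d_i,d_j)$ with $\{i,j\}=\{1,2,3\}\setminus\{k\}$. After discarding the quasi-reflection part of the $\mub_{d_k}$-action (which contributes nothing to the singularity) and reducing, one finds that the nontrivial germs are of type $A_1$ or $A_2$; equivalently, one passes to the well-formed models $\PM(1,1,2)$ and $\PM(1,2,3)$, which carry a single $A_1$, respectively one $A_1$ and one $A_2$, at their coordinate vertices, and uses that reduction isomorphisms of weighted projective spaces send coordinate vertices to coordinate vertices. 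Either way every singular $p_k$ is a Du Val point of type $A_j$, which is (c), and this computation simultaneously identifies which $p_k$ are singular — always the vertices attached to the larger degrees $8,12,20,30$, the others being smooth. (In the spirit of the earlier sections, one could instead identify this germ group-theoretically once (a) is known: by Remark~\ref{rem:stab-cyclique} the vertex $p_k$ is the image of the line $z_k=V(d_k)$, whose stabiliser $W_{z_k}=\langle w_{d_k}\rangle$ is cyclic, and Corollary~\ref{coro:vp-tangent}(b2) computes the eigenvalues of $w_{d_k}$ on $\Trm_{z_k}(\ZC(f))$, exhibiting the same cyclic quotient.)

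For part (a), I would observe that each $d_k$ attached to a singular vertex lies in $\{8,12,20,30\}$ and read off Table~\ref{table:degres}: no other degree of $W$ is divisible by such a $d_k$, so $\d(d_k)=1$; and each such $d_k$ divides no nonzero codegree of $W$, so the only codegree it divides is $0$ and hence $\d^*(d_k)=1$. This gives $\d(d_k)=\d^*(d_k)=1$. For part (b), once (a) is available I can invoke Theorem~\ref{theo:springer}(f): the eigenvalues of $w_{d_k}$ on $V$, with multiplicity, are $(\z_{d_k}^{1-d_j})_{1\le j\le 4}$, where the $d_j$ run over all four degrees of $W$. Taking the product yields $\det(w_{d_k})=\z_{d_k}^{\,4-(d_1+d_2+d_3+d)}=\z_{d_k}^{-|\Refle(W)|}$ by~\eqref{eq:degres}. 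Since $|\Refle(W)|$ equals $16$, $24$ or $60$ according to the case, and each relevant $d_k$ divides this number, we get $\det(w_{d_k})=1$, which is (b).

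The only genuine subtlety I anticipate is the weighted-projective bookkeeping underlying part (c): a vertex of seemingly large weight $d_k$ can nevertheless be a \emph{smooth} point once the quasi-reflection part of its stabiliser is divided out (for instance the weight-$2$ vertex, which becomes a weight-$1$ vertex after removing the common factor $2$), so the singular vertices must be pinned down by an honest local reduction rather than guessed from the size of $d_k$. Parts (a) and (b), by contrast, reduce to a direct reading of the degrees and codegrees in Table~\ref{table:degres} together with the eigenvalue formula of Theorem~\ref{theo:springer}(f), and present no real difficulty.
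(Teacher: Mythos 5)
Your proposal is correct and follows essentially the same route as the paper: a case-by-case enumeration of the four pairs $(W,d)$, reduction of $\PM(d_1,d_2,d_3)$ to its well-formed model $\PM(1,1,2)$ or $\PM(1,2,3)$ to locate the singular vertices and identify their $A_j$ types for (c), a direct reading of the degrees and codegrees in Table~\ref{table:degres} for (a), and Theorem~\ref{theo:springer}(f) for (b). The only (cosmetic) difference is that you package the determinant computation uniformly as $\det(w_{d_k})=\z_{d_k}^{-|\Refle(W)|}$ with $d_k$ dividing $|\Refle(W)|$, where the paper evaluates the exponent $\z_{d_k}^{4-d-d_1-d_2-d_3}$ explicitly in each case.
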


The proof will be given below. Let us first explain why this lemma
might help to check that the points in $\r^{-1}(\SC)$ are smooth or
ADE singularities. So let $p_k \in \SC$ and let
$\O_k=\pi_\fb^{-1}(p_k)$. Then
$$\O_k=\{p \in \PM(V)~|~\forall~1 \le j \neq k \le 3,~f_j(p)=f_k(p)=0\}.$$
By Lemma~\ref{lem:etale}, $\dim V(d_k)=1$, so we might view $V(d_k)
\in \PM(V)$ as a point $z_k \in \ZC(f)$. We denote by $z_k^\slv$ the
image of $z_k$ in $\ZC(f)/\G$. By Theorem~\ref{theo:springer}(d), we
have that $\O_k$ is the $W$-orbit of $z_k$. But the stabilizer of
$z_k$ in $W$ is $\langle w_{d_k} \rangle$ by
Remark~\ref{rem:stab-cyclique}, so it is contained in $\G$ by
Lemma~\ref{lem:etale}. So the map $\r$ is \'etale at $z_k^\slv$, and
so the singularity of $\ZC(f)/\G$ at $z_k^\slv$ is equivalent to the
singularity of $\PM(d_1,d_2,d_3)$ at $p_k$, hence is an $A_j$
singularity by Lemma~\ref{lem:etale}. This completes the proof of
Theorem~\ref{theo:main} whenever $\G=W^\slv$ and $(W,d) \neq
(G(2e,2e,4),4e)$, provided that Lemma~\ref{lem:etale} is proved.
This is done just below:

\begin{proof}[Proof of Lemma~\ref{lem:etale}]
Let us examine the different cases:
\medskip

\begin{itemize}
% \item[$\bullet$] {\it Type $G(2e,2e,4)$ with $e$ odd.}
% Assume here that $W=G(2e,2e,4)$ with $e$ odd.
% Then $d=6e$ and $(d_1,d_2,d_3)=(2e,4e,4)$. But
% $\PM(2e,4e,4)\simeq\PM(e,2e,2)\simeq\PM(1,2,2) \simeq
% \PM(1,1,1)=\PM^3(\CM)$ is smooth,
% so $\SC$ is empty and there is nothing to prove in this case.
%
% \medskip
%
% \item[$\bullet$] {\it Type $G(4e,4e,4)$.}
% Assume here that $W=G(4e,4e,4)$.
% Then $d=4e$ and $(d_1,d_2,d_3)=(8e,12e,4)$. But
% $\PM(8e,12e,4)\simeq\PM(2e,3e,1)\simeq\PM(2,3,1)$
% so $\SC=\{p_1,p_2\}$ and $d_k \in \{8e,12e\}$.
% Note also that $p_1$ is $A_1$ singularity
% and $p_2$ is an $A_2$ singularity.
% It follows fom Table~\ref{table:degres} that
% $\d(8e)=\d^*(8e)=1=\d(12e)=\d^*(12e)$. Also, Theorem~\ref{theo:springer}(f)
% implies that $\det(w_{8e})=\z_{8e}^{4-d-d_1-d_2-d_3}=\z_{8e}^{-24e}=1$
% and $\det(w_{12e})=\z_{12e}^{4-d-d_1-d_2-d_3}=\z_{312e}^{-24e}=1$.
%
% \medskip
%
\item[$\bullet$] {\it Type $G(2,1,4)$.}
Assume here that $W=G(2,1,4)$. Then $d=6$ and
$(d_1,d_2,d_3)=(2,4,8)$. But
$\PM(2,4,8)\simeq\PM(1,2,4)\simeq\PM(1,1,2)$, so $\SC = \{p_3\}$ and
$p_3$ is an $A_1$ singularity. Moreover, $d_3=8$ and it follows from
Table~\ref{table:degres} that $\d(8)=\d^*(8)=1$. Also,
Theorem~\ref{theo:springer}(f) implies that the eigenvalues of
$w_{8}$ are $(\z_8^{-5},\z_8^{-1},\z_8^{-3},\z_8)$, so
$\det(w_{8})=1$.

\medskip

\item[$\bullet$] {\it Type $G_{28}$.}
Assume here that $W=G_{28}$. If $d=6$, then
$(d_1,d_2,d_3)=(2,8,12)$: but in that case $\PM(2,8,12)=\PM(1,4,6)=\PM(1,2,3)$ so
$\SC=\{p_2,p_3\}$ and so $d_k \in \{8,12\}$ (and note that $p_2$ is
an $A_1$ singularity, while $p_3$ is an $A_2$ singularity). If
$d=8$, then $(d_1,d_2,d_3)=(2,6,12)$ and we have
$\PM(2,6,12)=\PM(1,3,6)=\PM(1,1,2)$ so $\SC=\{p_3\}$ and so $d_k =
12$ (and note that $p_3$ is an $A_1$ singularity). It follows fom
Table~\ref{table:degres} that $\d(8)=\d^*(8)=1=\d(12)=\d^*(12)$.
Also, Theorem~\ref{theo:springer}(f) implies that
$\det(w_8)=\z_8^{4-d-d_1-d_2-d_3}=\z_8^{-24}=1$ and
$\det(w_{12})=\z_{12}^{4-d-d_1-d_2-d_3}=\z_{12}^{-24}=1$.

\medskip

\item[$\bullet$] {\it Type $G_{30}$.}
Assume here that $W=G_{30}$. Then $d=12$ and
$(d_1,d_2,d_3)=(2,20,30)$. But we have then
$\PM(2,20,30)=\PM(1,10,15)=\PM(1,2,3)$ so $\SC=\{p_2,p_3\}$, so $d_k
\in \{20,30\}$. Note also that $p_2$ is an $A_1$-singularity while
$p_3$ is an $A_2$-singularity. It follows from Table~\ref{table:degres} that  $\d(20)=\d^*(20)=1$ 
$=\d(30)=\d^*(30)$.
Also, Theorem~\ref{theo:springer}(f) implies that
$\det(w_{20})=\z_{20}^{4-d-d_1-d_2-d_3}=\z_{20}^{-60}=1$ and
$\det(w_{30})=\z_{30}^{4-d-d_1-d_2-d_3}=\z_{30}^{-60}=1$.
%
% \medskip
%
% \item[$\bullet$] {\it Type $G_{31}$.}
% Assume here that $W=G_{31}$.
% Then $d=20$ and $(d_1,d_2,d_3)=(8,12,24)$. But
% $\PM(8,12,24)\simeq\PM(2,3,6)\simeq\PM(2,1,2)\simeq \PM(1,1,1)=\PM^3(\CM)$
% is smooth, so $\SC$ is empty and there is nothing to prove in this case.
\end{itemize}

\medskip

The proof of Lemma~\ref{lem:etale} is complete.
\end{proof}

\subsubsection{The case where $\G=W' \neq W^\slv$}\label{sub:w'}
This case can only occur if $W=G(2,1,4)$ or $G_{28}$.

\medskip

\begin{itemize}
\item[$\bullet$] {\it Type $G(2,1,4)$.}
Assume here that $W=G(2,1,4)$. Then $d=6$ and $f$ is also an
invariant of degree $6$ of $G(2,2,4)$. Since
$W'=G(2,2,4)'=G(2,2,4)^\slv$, the result follows from the previous
subsection.

\medskip

\item[$\bullet$] {\it Type $G_{28}$.}
Then $d \in \{6,8\}$. Then $d_1=2$, $d_3=12$ and $d_2$ is the unique
element of $\{6,8\} \setminus \{d\}$. Then
$$\ZC(f)/W'=\{[x_1:x_2:x_3:j_1:j_2] \in \PM(2,d_2,12,12,12)~|~$$
$$\hphantom{AAAAAAAA}j_1^2=P_{\fb,\O_1}(0,x_1,x_2,x_3)
~\text{and}~j_2^2=P_{\fb,\O_2}(0,x_1,x_2,x_3)\}.$$ The group
$W^\slv/W'$ has order $2$ (we denote by $\s$ its non-trivial
element) and it acts on $\ZC(f)/W'$ as follows:
$$\s([x_1:x_2:x_3:j_1:j_2])=[x_1:x_2:x_3:-j_1:-j_2].$$
So one can check that the ramification locus $\RC$ of
the morphism $\theta : \ZC(f)/W' \longto \ZC(f)/W^\slv$ is defined
by $j_1=j_2=0$ in both cases. We only need to prove that $\RC$ is finite: indeed,
if it is finite, then $\theta$ is unramified in codimension $1$ and
$\ZC(f)/W^\slv$ has only ADE singularities as it was shown
in~\S\ref{sub:slv}, so $\ZC(f)/W'$ has only ADE singularities by
Lemma~\ref{lem:revetement-ade}.

Now,
$$\pi_\fb^{\prime -1}(\RC)=\{p \in \PM(V)~|~j_1(p)=j_2(p)=f(p)=0\}.$$
We only need to prove that $\pi_\fb^{\prime -1}(\RC)$ is finite.
First, let
$$\HC=\{p \in \PM(V)~|~j_1(p)=j_2(p)=0\}.$$
Then the irreducible components of $\HC$ are lines of the form
$\PM(H_1 \cap H_2)$, where $H_1 \in \O_1$ and $H_2 \in \O_2$. This
means that we only need to prove that such a line cannot be entirely
contained in $\ZC(f)$. So, let $H_1 \in \O_1$ and $H_2 \in \O_2$ and
let $s_k$ denote the reflection of $W$ whose reflecting hyperplane
is $H_k$. Let $G=\langle s_1,s_2 \rangle$. Then $V^G=H_1 \cap H_2$
so $\dim V^G = 2$. If $\PM(V^G)$ is entirely contained in $\ZC(f)$,
it then follows from Corollary~\ref{coro:droite contenue} that it is
contained in $\ZC_\sing(f)$: but this contradicts the fact that
$\ZC(f)$ has only ADE singularities.
\end{itemize}

\medskip
The proof of Theorem~\ref{theo:main} is complete.\hfill\qed
%\end{proof}

%\section{Comments}\todo{Cette section pourra peut-etre etre supprimee selon
%ce que l'on mettra dans l'intro, ou plutot selon ce que TU mettras dans l'intro}

%\medskip

% Finally, let $n_v$ denote the rank of the parabolic subgroup
% $W_v$ of $W$. The discussion about the singularity of $\ZC(f)/\G$
% at the $\G$-orbit of $[v]$ will depend on the pair $(n_v,[v])$.
% We give here some examples of facts that give essentially all the
% information we need at least whenever $\ZC(f)$ is smooth at $[v]$.
% \begin{itemize}
% \itemth{0} If $n_v=0$, then $\d(e_v)=\d^*(e_v)$ by
% Theorem~\ref{theo:springer} and so the eigenvalues of $w_{e_v}$
% on the tangent space $\Trm_{[v]}(\ZC(f))$ are determined
% by Corollary~\ref{coro:vp-tangent}.
%
% \itemth{1} If $n_v=1$, then $W_v=\langle s \rangle$, where
% $s$ is a reflection (of order $2$). So $w_{e_v}$ commutes with
% $s$. This case could be somewhat difficult to treat but,
% in most cases, the centralizer of $w_{e_v}$ does not contain
% any reflection, except is $w_{e_v}$ is central, i.e. is an
% homothety (this will be observed in the cases we will treat).
% If this happens, this shows that $(PW)_{[v]}$ is generated by
% the class of $s$ and so $(P\G)_{[v]}$ is trivial.
%
% \itemth{2} If $n_v=2$, this is the most interesting case.
% We explain here what to do whenever $\ZC(f)$ is smooth
% (the singular case deserves generally a case-by-case treatment).
%
%
% \itemth{3} If $n_v=3$, then $\ZC(f)$ is singular at
% $[v]$ (see Lemma~\ref{coro:v-fixe-singulier}). This case
% deserves a case-by-case treatment.
% \end{itemize}

\renewcommand\thesection{\Alph{section}}
\setcounter{section}{0}

\section*{Appendix A. Surfaces in weighted projective spaces}
\addcontentsline{toc}{section}{Appendix A. Surfaces in weighted
projective spaces} \refstepcounter{section}
\label{app:2-forme}

Let $m \ge 3$ and let $l_0$, $l_1,\dots, l_m$ be positive
integers. We denote by $x_0$, $x_1,\dots, x_m$ the coordinates in
the weighted projective space $\PM(l_0,l_1,\dots,l_m)$ and we fix
$m-2$ polynomials $F_1,\dots, F_{m-2}$ in the variables $x_0$,
$x_1,\dots, x_m$ which are homogeneous of degree $e_1,\dots,
e_{m-2}$ (where $x_k$ is given the degree $l_k$). We consider the
variety
$$X=\{[x_0:x_1:\cdots:x_m] \in \PM(l_0,l_1,\dots,l_m)~|~
\forall~1 \le j \le m-2,~ F_j(x_0,x_1,\dots,x_m)=0\}.$$ Let
$\PM_\smooth$ (resp. $\PM_\sing$) denote the smooth (resp. singular)
locus of $\PM(l_0,l_1,\dots,l_m)$. We assume throughout this section
that the following hold:
\begin{itemize}
\itemth{H1} The weighted projective space $\PM(l_0,l_1,\dots,l_m)$
is well-formed, i.e.
$$\gcd(l_0,\dots,l_{j-1},l_{j+1},\dots,l_m)=1$$
for all $j \in \{0,1,\dots,m\}$.

\itemth{H2} The variety $X$ is a weighted complete intersection,
i.e. $\dim(X)=2$.

\itemth{H3} The variety $X$ is {\it well-formed}, i.e. $\codim_X(X \cap \PM_\sing) \ge 2$.

\itemth{H4} $l_0 + l_1 + \cdots + l_m = e_1 + \cdots + e_{m-2}$.

\itemth{H5} $X$ has only ADE singularities.
\end{itemize}
Note that we do not assume that $X$ is {\it quasi-smooth} (i.e. we
do not assume that the affine cone of $X$ in $\CM^{m+1}$ is smooth
outside the origin~\cite[\S{3.1.5}]{dolga_weighted}). The following
result is certainly well-known but, due to the lack of an
appropriate reference (particularly in the non-quasi-smooth case),
we provide here an explicit proof:

\begin{lemma}\label{lem:canonique}
Under the hypotheses \emph{(H1), (H2), (H3), (H4) and (H5)}, the smooth locus
of the surface $X$ has a non-degenerate $2$-form.
\end{lemma}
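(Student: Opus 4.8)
The statement is about constructing a non-degenerate holomorphic $2$-form on the smooth locus of a well-formed weighted complete intersection surface $X$ of the expected dimension, under the numerical Calabi--Yau condition (H4). The plan is to reduce the computation of the canonical sheaf to the adjunction formula on the ambient weighted projective space, and then to extract an explicit generating section. First I would work on the smooth locus $X^\smooth \cap \PM_\smooth$, where everything is a genuine complex manifold and the ordinary adjunction machinery applies. On $\PM(l_0,\dots,l_m)$ the dualizing sheaf is $\OC(-\sum_k l_k)$ (this is the weighted analogue of $\omega_{\PM^m}=\OC(-m-1)$, valid precisely because (H1) guarantees well-formedness, so that $\PM$ is a normal variety with quotient singularities in codimension $\ge 2$ and the Grothendieck-type computation of $\omega_\PM$ goes through). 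Iterating the adjunction formula along the $m-2$ hypersurfaces cut out by $F_1,\dots,F_{m-2}$ gives, on the open locus where $X$ is a smooth subvariety of $\PM_\smooth$,
$$
\omega_X \;\simeq\; \OC_X\Bigl(-\sum_{k=0}^m l_k + \sum_{j=1}^{m-2} e_j\Bigr).
$$
By hypothesis (H4) the twist is $\OC_X(0)=\OC_X$, so $\omega_X$ is trivial on this locus, and a trivializing section is exactly the non-degenerate $2$-form we seek.

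The concrete way I would produce the form, rather than quoting adjunction abstractly, is via a Poincaré-residue / Griffiths-type construction. On the affine cone, one writes the translation-invariant volume form with weighted Euler vector field contracted in, namely something built from $\iota_E\,(dx_0\wedge\cdots\wedge dx_m)$ where $E=\sum_k l_k x_k\,\partial_{x_k}$ is the Euler field; this descends to a meromorphic form on $\PM$ generating $\OC(-\sum l_k)$. Taking successive residues along $F_1=0,\dots,F_{m-2}=0$ yields a meromorphic $2$-form on $X$ whose polar/zero behaviour is governed entirely by the degree count $\sum e_j - \sum l_k$. Under (H4) this count is zero, so the residue form is holomorphic and nowhere vanishing on the smooth locus of $X$ lying in $\PM_\smooth$. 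Condition (H2) is what guarantees the residue is taken along a genuine codimension-$(m-2)$ complete intersection, so that each residue step is legitimate and the final object is a $2$-form.

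The subtle point, and the main obstacle, is that the two open loci I have excluded --- the ADE singular points of $X$ (handled by (H5)) and, more seriously, the locus $X\cap\PM_\sing$ --- must not spoil the extension of the form across the smooth locus of $X$. Here (H3) is exactly the hypothesis that saves us: well-formedness of $X$ means $\codim_X(X\cap\PM_\sing)\ge 2$, so the bad ambient locus meets $X$ only in (at most) finitely many points. Since $X$ is normal (being a complete intersection satisfying Serre's $S_2$, and regular in codimension one by (H3) together with the ADE/isolated-singularity hypothesis (H5)), a holomorphic $2$-form defined away from a codimension-$2$ subset of the smooth locus extends uniquely across it by Hartogs / normality of $X$. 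Thus the form constructed on the generic smooth part extends to a nowhere-zero holomorphic $2$-form on all of $X^\smooth$. I expect the delicate bookkeeping to be in verifying that the residue form genuinely extends over the points of $X\cap\PM_\sing$ (where $\PM$ itself is singular and the naive adjunction computation is invalid): the argument must invoke that these are finitely many points on the smooth surface $X^\smooth$ and appeal to normality rather than to any pointwise adjunction identity, which is precisely why (H1) and (H3) are both needed rather than quasi-smoothness.
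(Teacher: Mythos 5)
Your argument is correct, and it in fact arrives at the same object as the paper: on the locus where the Jacobian of $(F_1,\dots,F_{m-2})$ has full rank, your iterated Poincar\'e residue is locally exactly the form $l_a\,dx_b\wedge dx_c/J_{b,c}^{(a)}$ that the paper writes down. But the route is genuinely different. You obtain triviality of the canonical sheaf on the good locus from the formula $\omega_\PM\simeq\OC_\PM(-\textstyle\sum_k l_k)$ for a well-formed weighted projective space together with adjunction, produce the trivializing section by residues, and extend across the finite set $X\cap\PM_\sing$ (and the ADE points) by Hartogs on the smooth surface, non-degeneracy following since the zero locus of the extended section of a line bundle is a divisor contained in a finite set, hence empty. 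The paper deliberately avoids quoting this machinery: as it notes, the available references treat the quasi-smooth case, so it instead defines the candidate $2$-form by the explicit chart formula, checks $\mub_{l_a}$-invariance by the degree count $(\#)$ --- this is where (H1) and (H4) enter --- and glues by hand, first within a chart via a generalized Cramer's rule and then between charts by an explicit coordinate change using (H4) again. Your approach is shorter and more conceptual, but it silently uses orbifold facts the paper chose to verify rather than cite: that on $\PM_\smooth$ of a well-formed space the sheaves $\OC(d)$ are genuine line bundles with $\OC(d)\otimes\OC(e)\simeq\OC(d+e)$, so that adjunction applies literally, and that the weighted residue form descends through the $\mub_{l_a}$-quotient charts, which is precisely the invariance computation the paper performs. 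Two small points to tighten: ``iterating the adjunction formula'' along the hypersurfaces needs the remark that where the full Jacobian has rank $m-2$ all intermediate intersections are smooth at the point (or apply local-complete-intersection adjunction in a single step, since intermediate loci may be globally singular); and your opening reduction to the smooth locus of $X\cap\PM_\smooth$ followed by Hartogs extension is exactly the paper's first paragraph, so the two proofs differ only in how the form on that locus is produced and glued.
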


\begin{proof}
We set $\PM=\PM(l_0,l_1,\dots,l_m)$ for simplification. Let $U$
denote the smooth locus of $X \cap \PM_\smooth$. By (H3), $X \cap
\PM_\sing$ has codimension $ \ge 2$ in $X$ and so $X \setminus U$
has codimension $\ge 2$ in $X$ by (H5). Again by (H5), it is
sufficient to prove that $U$ admits a non-degenerate $2$-form.

If $0 \le a \le m$ and $1 \le j \le m-2$, we denote $\PM^{(a)}$ the
open subset of $\PM$ defined by $x_a \neq 0$: we identify it with
$\CM^m/\mub_{l_a}$, where the coordinates in $\CM^m$ are denoted by
$(x_0,\dots,x_{a-1},x_{a+1},\dots,x_m)$ and $\mub_{l_a}$ acts
through $\z \cdot
(x_0,\dots,x_{a-1},x_{a+1},\dots,x_m)=(\z^{l_0}x_0,\dots,\z^{l_{a-1}}x_{a-1},
\z^{l_{a+1}}x_{a+1},\dots,\z^{l_m} x_m)$, and we set
$$F_j^{(a)}(x_0,\dots,x_{a-1},x_{a+1},\dots,x_m)=F_j(x_0,\dots,x_{a-1},1,x_{a+1},\dots,x_m).$$
The smooth locus of $\PM^{(a)}$ will be denoted by
$\PM_\smooth^{(a)}$ and, since $\PM$ is well-formed by~(H1), the
above action of $\mub_{l_a}$ on $\CM^m$ contains no reflection and
so $\PM_\smooth^{(a)}$ is the unramified locus of the morphism
$\CM^m \to \CM^m/{\mub_{l_a}}$.

Let $J^{(a)}= \bigl(\partial F_j^{(a)}/\partial x_k\bigr)_{1 \le j
\le m-2,0 \le k \neq a \le m}$ denote the Jacobian matrix of the
family $(F_1^{(a)},\dots,F_{m-2}^{(a)})$. If $b$, $c$ are two
different elements of $\{0,1,\dots,m\}$ which are different from
$a$, we denote by $J_{b,c}^{(a)}$ the $(m-2) \times (m-2)$ minor of
$J^{(a)}$ obtained by removing the two columns numbered by $b$ and
$c$. We set
$$\PM_{\smooth,b,c}^{(a)}=\{p \in \PM_\smooth^{(a)}~|~
J_{b,c}^{(a)}(p) \neq 0\}$$
$$U_{b,c}^{(a)}=X \cap \PM_{\smooth,b,c}^{(a)}.\leqno{\text{and}}$$
By the above description of $\PM_\smooth^{(a)}$ and~(H2), we get
$$U = \bigcup_{\substack{0 \le a,b,c \le m \\ |\{a,b,c\}|=3}}
U_{b,c}^{(a)}.$$

We now define a $2$-form $\omega_{b,c}^{(a)}$ on
$\PM_{\smooth,b,c}^{(a)}$ by \equat \omega_{b,c}^{(a)}= l_a
\frac{dx_b \wedge dx_c}{J_{b,c}^{(a)}}.
\endequat
Let us first explain why this defines a $2$-form on
$\PM_{\smooth,b,c}^{(a)}$. This amounts to show that
$\omega_{b,c}^{(a)}$ is invariant under the action of $\mub_{l_a}$
on the variables $(x_k)_{0 \le k \neq a \le m}$ given by $\xi \cdot
x_k = \xi^{l_k} x_k$. But, if $M$ is a monomial in $J_{b,c}^{(a)}$
of degree $e$ in the variables $(x_k)_{0 \le k \neq a \le m}$, then
$$e \equiv e_1+\cdots + e_{m-2} -
(l_0+l_1+\cdots + l_m - l_a - l_b - l_c) \mod l_a \leqno{(\#)}$$
because the variable $x_a$ is specialized to $1$. So $\xi \in
\mub_{l_a}$ acts on $J_{b,c}^{(a)}$ by multiplication by
$\xi^{l_b+l_c}$ by~(H4). So it acts trivially on
$\omega_{b,c}^{(a)}$.

We now denote by $\omega_{U,b,c}^{(a)}$ the restriction of
$\omega_{b,c}^{(a)}$ to $U_{b,c}^{(a)}$. Note that
$U_{b,c}^{(a)}=U_{c,b}^{(a)}$ but that
$\omega_{U,b,c}^{(a)}=-\omega_{U,c,b}^{(a)}$, so we have to make
some choice. We denote by $\EC$ the set of triples $(a,b,c)$ of
elements of $\{0,1,\dots,m\}$ such that $a < b < c$ or $c < a < b$
or $b < c < a$. Then again
$$U = \bigcup_{(a,b,c) \in \EC} U_{b,c}^{(a)}$$
and we want to show that the family of $2$-forms
$(\omega_{U,b,c}^{(a)})_{(a,b,c) \in \EC}$ glue together to define a
$2$-form on $U$. The argument is standard and will be done in two
steps.

\medskip

\noindent{\it First step: glueing inside an affine chart.} We fix $a
\in \{0,1,\dots,m\}$ and we set $U^{(a)}=U \cap \PM^{(a)}$. Let $b$,
$c$, $b'$, $c' \in \{0,1,\dots,m\}$ be such that $(a,b,c)$,
$(a,b',c') \in \EC$. We need to prove that
$$\left.\omega_{U,b,c}^{(a)}\right|_{U_{b,c}^{(a)} \cap U_{b',c'}^{(a)}}
= \left.\omega_{U,b',c'}^{(a)}\right|_{U_{b,c}^{(a)} \cap
U_{b',c'}^{(a)}}.\leqno{(\sharp)}$$ Proving~$(\sharp)$ is a
computation in $\CM^m$ and amounts to prove that
$$J_{b',c'}^{(a)} dx_b \wedge dx_c =
J_{b,c}^{(a)} dx_{b'} \wedge dx_{c'}\leqno{(\sharp')}$$ on the
variety $\Xhat^{(a)}$ defined by $F_1^{(a)}=\cdots=F_{m-2}^{(a)}=0$
inside $\CM^m$. By applying a power of the cyclic permutation
$(0,1,\dots,m)$ to the coordinates, we may (and we will) assume that
$a=0$ (so that $0 < b < c$ and $0 < b' < c'$). Since $F_j^{(0)}$
vanishes on $\Xhat^{(0)}$, its differential vanishes also on $X$,
which implies that
$$\forall 1 \le j \le m-2,~
\sum_{k=1}^m \frac{\partial F_j^{(0)}}{\partial x_k} dx_k = 0
\quad\text{on $\Xhat^{(0)}$.}$$ Then~$(\sharp')$ is an easy
application of generalized Cramer's rule~\cite{gong}.

\medskip

\noindent{\it Second step: glueing affine charts.} We denote by
$\omega_U^{(a)}$ the glueing of the $2$-forms
$\omega_{U,b,c}^{(a)}$, where $b$, $c$ are such that $(a,b,c) \in
\EC$. Let $a$, $a' \in \{0,1,\dots,m\}$. We need to prove that
$$\omega^{(a)}|_{U^{(a)} \cap U^{(a')}} = \omega^{(a')}|_{U^{(a)} \cap U^{(a')}}.
\leqno{(\flat)}$$ For simplifying the notation, we will assume that
$(a,a')=(0,1)$, the general case being treated similarly. We will
denote by $(x_k)_{0 \le k \neq a \le m}$ the coordinates on
$\PM^{(a)}$ and $(x_k')_{0 \le k \neq a' \le m}$ the coordinates on
$\PM^{(a')}$. Also for simplifying the notation, we will assume that
$U_{1,2}^{(0)} \cap U_{(2,0)}^{(1)} \neq \vide$. So, for
proving~$(\flat)$, we only need to prove that
$$l_0 J_{2,0}^{(1)} dx_1 \wedge dx_2 = l_1 J_{1,2}^{(0)} dx_2' \wedge dx_0'
\qquad\text{on $\PM^{(0)} \cap \PM^{(1)}$.}\leqno{(\flat')}$$ The
variables $(x_k)_{0 \le k \neq a \le m}$ and $(x_k')_{0 \le k \neq
a' \le m}$ are related as follows:
$$
\begin{cases}
x_0'=\DS{\frac{1}{x_1^{l_0/l_1}}},\\
\forall 2 \le k \le m,~x_k'=\DS{\frac{x_k}{x_1^{l_k/l_1}}}.
\end{cases}$$
Therefore $dx_0'=-(l_0/l_1) x_1^{-1-l_0/l_1} dx_1$ and so
$$l_1 dx_2' \wedge dx_0' = l_0 x_1^{-(l_0+l_1+l_2)/l_1}
dx_1 \wedge dx_2.\leqno{(\flat'')}$$ Moreover, since $F_j$ is
homogeneous of degree $e_j$, we get
$$F_j^{(0)}(x_1,x_2,\dots,x_m)=x_1^{e_j/l_1} F_j^{(1)}(x_0',x_2',\dots,x_m').$$
We deduce that
$$\frac{\partial F_j^{(0)}}{\partial x_k}=x_1^{(e_j-l_k)/l_1}\frac{\partial F_j^{(1)}}{\partial x_k'}$$
for all $k \ge 3$ and then
$$J_{1,2}^{(0)}=x_1^{((e_1+\cdots+e_{m-2})-(l_3+\cdots+l_m))/l_1} J_{2,0}^{(1)}.
\leqno{(\flat''')}$$ So $(\flat')$ follows from~$(\flat'')$
and~$(\flat''')$ since $e_1+\cdots + e_{m-2} = l_0+ l_1 + \cdots +
l_m$ by~(H4).
\end{proof}

\begin{cor}\label{coro:canonique}
Under the hypotheses \emph{(H1), (H2), (H3), (H4) and (H5)}, the smooth locus
of the surface $X$ has a trivial canonical sheaf and its minimal
resolution is a smooth K3 surface or an abelian variety.
\end{cor}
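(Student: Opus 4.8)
The plan is to read off the statement from Lemma~\ref{lem:canonique}, combined with two standard inputs: that ADE (du Val) singularities are canonical with a \emph{crepant} minimal resolution, and the Enriques--Kodaira classification of smooth projective surfaces with trivial canonical bundle.

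For the first assertion I would argue as follows. On a smooth surface a holomorphic $2$-form is non-degenerate if and only if it is nowhere vanishing, so the $2$-form produced by Lemma~\ref{lem:canonique} is a nowhere-vanishing global section of the canonical sheaf of the smooth locus $X_\smooth$. Hence $\omega_{X_\smooth} \simeq \OC_{X_\smooth}$ is trivial, which is exactly the first claim. (If one wants the stronger statement that $\omega_X$ is trivial on all of $X$: by (H2) the complete intersection $X$ is Cohen--Macaulay, hence satisfies $S_2$, and by (H5) its singular locus is a finite set, so $X$ is $R_1$; by Serre's criterion $X$ is normal and $\omega_X$ is reflexive, so the section above extends over the codimension-$2$ set $X_\sing$ and trivializes $\omega_X$, giving $K_X \sim 0$.)

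For the second assertion I would pass to the minimal resolution $p : \Xti \longto X$, which exists since $X$ is a normal projective surface. Because the singularities of $X$ are of type ADE, they are rational double points and the minimal resolution is crepant, i.e.\ all discrepancies vanish. Therefore the nowhere-vanishing $2$-form on $X_\smooth$ pulls back to a $2$-form on $\Xti \setminus E$, with $E = p^{-1}(X_\sing)$ the exceptional locus, which extends, with neither zeros nor poles along $E$, to a nowhere-vanishing holomorphic $2$-form on all of $\Xti$. Consequently $K_\Xti \sim 0$, so $\Xti$ is a smooth projective surface with trivial canonical bundle. By the Enriques--Kodaira classification such a surface is either a K3 surface (when its irregularity is $0$) or an abelian surface (when its irregularity is $2$), which is the desired conclusion.

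The substantive work has already been carried out in Lemma~\ref{lem:canonique}, so what remains is essentially bookkeeping. The one point that deserves care is the extension of the $2$-form across the exceptional divisor $E$: this is where the hypothesis that the singularities are of type ADE enters in an essential way, through the crepancy (vanishing of all discrepancies) of the minimal resolution of a du Val singularity. Without the ADE assumption the pulled-back form could acquire zeros along $E$, and $\Xti$ would no longer have trivial canonical bundle.
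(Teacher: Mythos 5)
Your proposal is correct and follows the same route the paper takes: the paper states Corollary~\ref{coro:canonique} as an immediate consequence of Lemma~\ref{lem:canonique}, using exactly the facts you spell out (a non-degenerate $2$-form on a surface is a nowhere-vanishing section of the canonical sheaf, ADE singularities admit a crepant minimal resolution so the form extends without zeros, and the classification of smooth surfaces with trivial canonical bundle gives K3 or abelian), the same reasoning being repeated in the proof of Theorem~\ref{theo:main}. Your write-up merely makes this bookkeeping explicit, including the correct observation that crepancy of the du Val resolution is the essential point in extending the form across the exceptional locus.
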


\section*{Appendix B. Around ADE singularities}
\addcontentsline{toc}{section}{Appendix B. Around ADE singularities}
\refstepcounter{section}\label{app:auto-ade}

The results of this appendix are certainly well-known. Here, we let
$\Gb\Lb_2(\CM)$ act on the ring of formal power series $\CM[[t, u]]$
naturally by linear changes of the variables. We set $B= \Spec
\CM[[t, u]]$ and we denote by $0$ its unique closed point. We set
$B^\#=B \setminus \{0\}$: it is an open subscheme of $B$. Since $B$
is normal, it follows from Hartog's Lemma that
\equat\label{eq:sections-globales} \OC_{B}(B^\#)=\CM[[t,u]].
\endequat

\begin{lemma}\label{lem:auto-ade}
Let $G$ be a finite subgroup of $\Gb\Lb_2(\CM)$ containing no
reflection and let $\s$ be an automorphism of $\CM[[t, u]]^G$. Then
$\s$ lifts to an automorphism of $\CM[[t, u]]$.
\end{lemma}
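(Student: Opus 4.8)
The plan is to argue geometrically on the formal disk $B=\Spec\CM[[t,u]]$. Write $S=\CM[[t,u]]$, $R=S^G$ and $Y=\Spec R$ with closed point $s$, and let $q\colon B\to Y$ be the (finite) quotient morphism, so that $Y^\#:=Y\setminus\{s\}$ receives the restriction $q^\#\colon B^\#\to Y^\#$. Since $\s$ is an automorphism of the complete local ring $R$, it preserves the maximal ideal, so $\Sigma:=\Spec\s$ fixes $s$ and restricts to an automorphism $\Sigma^\#$ of $Y^\#$. The first geometric input I would record is that $G$ acts freely on $B^\#$: any $g\in G\setminus\{1\}$ fixing a nonzero vector of $\CM^2$ would have a one-dimensional fixed space, hence be a reflection, contradicting the hypothesis; thus the non-free locus is contained in $\{0\}$, of codimension $2$. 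Consequently $q^\#$ is a connected finite \'etale Galois cover with group $G$ (connectedness of $B^\#$ coming from $S$ being a normal domain of dimension $2$).

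The crux is to show that $B^\#$ is simply connected, so that $q^\#$ is the universal cover of $Y^\#$. For this I would invoke Zariski--Nagata purity: since $B$ is regular and $\{0\}$ has codimension $2$, the restriction map on \'etale fundamental groups $\pi_1(B^\#)\to\pi_1(B)$ is an isomorphism; and $\pi_1(B)=1$ because $S$ is a complete local ring with algebraically closed residue field, hence strictly henselian. Therefore $\pi_1(B^\#)=1$, the cover $q^\#$ is a universal cover of $Y^\#$, and $\pi_1(Y^\#)\simeq G$.

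Next I would lift $\Sigma^\#$ to $B^\#$. Pulling back $q^\#$ along the isomorphism $\Sigma^\#$ yields a cover $(\Sigma^\#)^*B^\#\to Y^\#$ which is again connected, finite \'etale, and simply connected, since all three properties are preserved under base change along an isomorphism. As a connected, simply connected, finite \'etale cover of a connected scheme is unique up to isomorphism, there is an isomorphism $\psi\colon B^\#\longiso(\Sigma^\#)^*B^\#$ over $Y^\#$. Composing $\psi$ with the projection $(\Sigma^\#)^*B^\#\to B^\#$ produces an automorphism $\Phi^\#$ of $B^\#$ satisfying $q^\#\circ\Phi^\#=\Sigma^\#\circ q^\#$.

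Finally I would descend to rings. By Hartogs, equation~\eqref{eq:sections-globales} gives $\OC_B(B^\#)=S$, and likewise $\OC_Y(Y^\#)=R$ since $R$ is normal of dimension $2$; under these identifications $(q^\#)^*$ is the inclusion $R\injto S$. Hence $\Phi^\#$ induces a ring automorphism $\tilde\s:=(\Phi^\#)^*$ of $S$, and the commuting square forces $\tilde\s|_R=\s$, which is the desired lift. The main obstacle is the middle step: identifying $B^\#$ as the universal cover of $Y^\#$ through purity and strict henselianity, and phrasing ``automorphisms of the base lift to the universal cover'' precisely in the \'etale setting. By contrast, the freeness of the $G$-action (coming from the no-reflection hypothesis) and the concluding Hartogs descent are routine within the framework already set up in the excerpt.
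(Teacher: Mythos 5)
Your argument is correct and follows essentially the same route as the paper's proof: simple connectedness of $B^\#$ via triviality of $\pi_1(B)$ for the strictly henselian local ring plus Zariski--Nagata purity, freeness of the $G$-action on $B^\#$ from the no-reflection hypothesis so that $q^\#$ is the universal cover of $Y^\#$, lifting $\s$ by uniqueness of the universal cover, and descent to rings via Hartogs and~(\ref{eq:sections-globales}). Your write-up merely spells out more explicitly (via the pullback $(\Sigma^\#)^*B^\#$ and the identification $\OC_Y(Y^\#)=R$) steps the paper leaves implicit.
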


\begin{remark}\label{rem:sl2-ref}
If $G$ is a finite subgroup of $\Sb\Lb_2(\CM)$, then $G$ contains no
reflection. This shows that the above lemma applies to ADE
singularities $\CM[[t,u]]^G$.
\end{remark}

\begin{proof}
First, note that $B$ has a trivial fundamental group
by~\cite[expos\'e~I, th\'eor\`eme~6.1]{SGA1}. As it is regular of
dimension 2, its open subset $B^\#$ has also a trivial fundamental
group by \cite[expos\'e~X, corollaire~3.3]{SGA1}. Therefore, the
natural map
$$\pi : B^\# \longto B^\#/G$$
is a universal covering: indeed, the morphism $B \to B/G$ is
ramified only at $0$ because $G$ does not contain any reflection. In
particular, $\pi \circ \s$ is also a universal covering, which means
that $\s$ lifts to an automorphism of $B^\#$ since $B^\#/G$ is
connected. Taking global sections and
using~(\ref{eq:sections-globales}) yields the result.
\end{proof}

\begin{lemma}\label{lem:revetement-ade}
Let $\pi : Y \to X$ be a finite morphism of normal surfaces which is
unramified in codimension $1$. We assume moreover that $X$ has only
ADE singularities. Then $Y$ has only ADE singularities.
\end{lemma}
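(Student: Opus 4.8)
The plan is to reduce the statement to a purely formal-local assertion and then feed it into the covering-space description of ADE singularities used in Lemma~\ref{lem:auto-ade}. Fix a point $y \in Y$ and set $x=\pi(y)$; it suffices to show that the completed local ring $\OC_{Y,y}^\wedge$ is regular or isomorphic to an invariant ring $\CM[[t,u]]^H$ for some finite subgroup $H \subset \Sb\Lb_2(\CM)$, since by Remark~\ref{rem:sl2-ref} such a ring is smooth (if $H=1$) or has an ADE singularity. Because $X$ has only ADE singularities, we have $\OC_{X,x}^\wedge \cong \CM[[t,u]]^G$ for some finite $G \subset \Sb\Lb_2(\CM)$ (with $G=1$ when $x$ is smooth), so I would set $A=\OC_{X,x}^\wedge$ and $C=\OC_{Y,y}^\wedge$, noting that $C$ is a finite, local, normal $A$-algebra of dimension~$2$ and that $\Spec C \to \Spec A$ is still unramified in codimension~$1$.

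The heart of the argument is to pass to punctured spectra. Keeping the notation $B=\Spec\CM[[t,u]]$ and $B^\#=B\setminus\{0\}$ of the appendix, I would identify the punctured spectrum $A^\#=\Spec A\setminus\{0\}$ with the quotient $B^\#/G$; as recalled in the proof of Lemma~\ref{lem:auto-ade}, the action of $G$ on $B^\#$ is free (a nontrivial element of $\Sb\Lb_2(\CM)$ fixing a line would be unipotent, hence trivial), and $B^\#$ is simply connected by~\cite{SGA1}, so $B^\#\to A^\#$ is a universal covering and $\pi_1(A^\#)=G$. Writing $C^\#=\Spec C\setminus\{\mathfrak{m}\}$ for the punctured spectrum of $C$, I would first check that the induced finite morphism $C^\#\to A^\#$ is \emph{étale}. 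Indeed, since the ADE singularity is isolated, $A^\#$ is regular of dimension~$1$, so being unramified in codimension~$1$ forces the morphism to be unramified over all of $A^\#$; finiteness together with the Cohen--Macaulay property of $C^\#$ (normal of dimension~$1$) over the regular base $A^\#$ then gives flatness by miracle flatness, whence étaleness.

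I would then apply covering-space theory. As $C$ is normal of dimension~$2$, its punctured spectrum $C^\#$ is connected, so $C^\#$ is a connected finite étale cover of $A^\#$; since $\pi_1(A^\#)=G$ with universal cover $B^\#$, it must have the form $C^\#\cong B^\#/H$ for some subgroup $H\le G$. Finally I would recover the rings by Hartogs: exactly as in~\eqref{eq:sections-globales}, normality of $C$ in dimension~$2$ gives $C=\OC(C^\#)$, while $\OC(B^\#/H)=\OC(B^\#)^H=\CM[[t,u]]^H$ by~\eqref{eq:sections-globales} and taking $H$-invariants along the free cover $B^\#\to B^\#/H$. Thus $\OC_{Y,y}^\wedge\cong\CM[[t,u]]^H$ with $H\le G\subset\Sb\Lb_2(\CM)$, which by Remark~\ref{rem:sl2-ref} is smooth or ADE, and this is what was needed.

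I expect the main obstacle to be the étaleness step: one must argue carefully that ``unramified in codimension~$1$'' upgrades to ``unramified, hence étale, over the whole punctured spectrum $A^\#$'', which relies on $A^\#$ being regular of dimension~$1$ (isolated singularity) and on purity of the branch locus for the flatness. The remaining ingredients---connectedness of $C^\#$, the identification $\pi_1(A^\#)=G$, and the Hartogs recovery of the invariant rings---run in exact parallel to the steps already carried out in Lemma~\ref{lem:auto-ade}.
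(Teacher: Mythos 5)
Your proof is correct and follows essentially the same route as the paper's: complete at $y$, pass to punctured spectra, use the simple connectedness of $B^\#$ (as in the proof of Lemma~\ref{lem:auto-ade}) to identify $(\Spec \OCh_{Y,y})\setminus\{y\}$ with $B^\#/H$ for a subgroup $H \le G$, and recover $\OCh_{Y,y}=\CM[[t,u]]^H$ by Hartogs together with~(\ref{eq:sections-globales}). The only difference is one of detail: you spell out the upgrade from ``unramified in codimension $1$'' to finite \'etale over the punctured spectrum (via openness of the unramified locus and miracle flatness), a step the paper's proof leaves implicit when it invokes covering-space theory.
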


\begin{proof}
Let $y \in Y$ and let $x=\pi(y)$. Then there exists a finite
subgroup $G$ of $\Sb\Lb_2(\CM)$ such that the completion of
the local ring $\OC_{X,x}$ of $X$ at $x$ is given by $\hat{\OC}_{X,x} \simeq
\CM[[t,u]]^G$. Therefore, the morphism of schemes
$$\pi_y : (\Spec \OCh_{Y,y}) \setminus \{y\} \longto
B^\#/G$$ induced by $\pi$ is unramified by hypothesis, so there
exists a morphism of schemes
$$B^\# \longto (\Spec \OCh_{Y,y}) \setminus \{y\}$$
whose composition with $\pi_y$ is a universal covering of $B^\#/G$
(see the proof of Lemma~\ref{lem:auto-ade}).

Consequently, there exists a subgroup $H$ of $G$ such that
$$(\Spec \OCh_{Y,y}) \setminus \{y\} =
B^\#/H.$$ Taking global sections and applying Hartog's Lemma
together with~(\ref{eq:sections-globales}) yields that
$\OCh_{Y,y}=\CM[[t,u]]^H$.
\end{proof}

Recall that, if $G \subset \Gb\Lb_2(\CM)$, then the only point of
$\CM^2/G$ that might be singular is the $G$-orbit of $0$ (denoted by
$\bar{0}$). The next result is certainly well-known:

\begin{lemma}\label{lem:gl2-ade}
Let $G$ be a finite subgroup of $\Gb\Lb_2(\CM)$ which is generated
by $\Refle(G)$ and let $\G$ be a subgroup of $G$ of index $1$ or
$2$. Then $\bar{0} \in V/\G$ is smooth or an ADE singularity.
\end{lemma}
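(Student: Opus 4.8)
The plan is to treat the two cases according to the index. If $[G:\G]=1$ then $\G=G$ is a reflection group, so by the Shephard--Todd--Chevalley theorem $\CM[V]^G$ is a polynomial algebra and $V/G$ is smooth; in particular $\bar0$ is a smooth point. From now on I would assume $[G:\G]=2$, in which case $\G$ is automatically normal and $\G=\Ker(\chi)$ for a surjective character $\chi:G\to\{\pm1\}$.

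The first step is to remove the reflections of $\G$. Set $\G_{\mathrm r}=\langle\Refle(\G)\rangle$. Since conjugation by $G$ permutes $\Refle(\G)$, the subgroup $\G_{\mathrm r}$ is normal in $G$; being generated by reflections it is itself a reflection group, so $S:=V/\G_{\mathrm r}$ is smooth ($\cong\CM^2$) by Shephard--Todd--Chevalley. Both $\overline G:=G/\G_{\mathrm r}$ and $\overline\G:=\G/\G_{\mathrm r}$ then act on $S$, fixing the image $\bar0$ of the origin, with $[\overline G:\overline\G]=2$. Linearising at $\bar0$ (Cartan's lemma) I regard $\overline G\subset\Gb\Lb_2(\CM)$. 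Because $\CM[S]^{\overline G}=\CM[V]^G$ is polynomial, $S/\overline G\cong V/G$ is smooth, so the converse Shephard--Todd--Chevalley theorem shows that $\overline G$ is again a reflection group; moreover a standard fact on reflection quotients (any $g\in\G$ inducing a pseudoreflection on $V/\G_{\mathrm r}$ already lies in $\G_{\mathrm r}$) shows that $\overline\G$ contains no reflection. Since $V/\G\cong S/\overline\G$ and the singularity at $\bar0$ is the quotient singularity of the linearised $\overline\G$-action, it suffices to prove the lemma for the reflection-free index-two subgroup $\overline\G$ of the reflection group $\overline G$.

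The heart of the matter is then the claim that $\overline\G\subset\Sb\Lb_2(\CM)$. Write $\overline\G=\Ker(\overline\chi)$ for $\overline\chi:\overline G\to\{\pm1\}$. Reflection-freeness of $\overline\G$ forces $\overline\chi(r)=-1$ for every reflection $r$ of $\overline G$. If some reflection $r$ had order $\ge3$, then $r^2$ would again be a reflection (it fixes the same line and is nontrivial) with $\overline\chi(r^2)=\overline\chi(r)^2=1$, so $r^2\in\overline\G$, contradicting reflection-freeness. Hence every reflection of $\overline G$ has order $2$, so $\det(r)=-1=\overline\chi(r)$. As $\det$ and $\overline\chi$ are characters of $\overline G$ agreeing on the generating set $\Refle(\overline G)$, they coincide; therefore $\overline\G=\Ker(\det)=\overline G\cap\Sb\Lb_2(\CM)$. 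A finite subgroup of $\Sb\Lb_2(\CM)$ contains no reflection (Remark~\ref{rem:sl2-ref}), and its quotient singularity is, by the classical Du~Val/Klein classification, smooth (if the group is trivial) or of ADE type; this yields the statement.

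The step I expect to be the main obstacle is the reduction carried out in the second paragraph: one must argue carefully that $\overline G$ genuinely acts as a reflection group on the smooth surface $S$ — which requires linearising the a priori non-linear action at the fixed point $\bar0$ and invoking the converse Chevalley theorem — and that $\overline\G$ is genuinely reflection-free. Everything afterwards, namely the order-two/determinant argument identifying $\overline\G$ with $\overline G\cap\Sb\Lb_2(\CM)$, is short and formal.
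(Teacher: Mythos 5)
Your proposal is correct, and its endgame is exactly the paper's: all reflections of $\overline{G}$ have order $2$ (since $r^2\in\Ker(\overline{\chi})$ would otherwise be a reflection in $\overline{\G}$), hence $\overline{\chi}=\det$ on $\Refle(\overline{G})$ and so $\overline{\G}=\overline{G}\cap\Sb\Lb_2(\CM)$, giving an ADE singularity. Where you genuinely diverge is the reduction. The paper argues by induction on $|G|$: it quotients by $\G_r$, uses smoothness of $V/G$ (and~\cite{BBR}) to see that $G/\G_r$ acts linearly as a reflection group on $V/\G_r$, and simply re-applies the lemma to the smaller pair $(G/\G_r,\G/\G_r)$, reserving the determinant argument for the case $\G_r=1$; it thereby never needs the step you flag as the main obstacle, namely that $\overline{\G}$ is reflection-free. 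You collapse the reduction into one step at the price of asserting that fact, which fortunately is true and elementary: the fixed locus of $g\G_r$ on $V/\G_r$ is the image of $\bigcup_{n\in\G_r}V^{ng}$, and since the quotient map is finite this has codimension $1$ only if some $ng$ is a reflection of $\Gb\Lb_2(\CM)$; but $ng\in\G$ and $\G_r$ contains \emph{every} reflection of $\G$, so $g\G_r=\G_r$ and $\bar{g}=1$. (Equivalently, $V/\G_r\to V/\G$ is unramified in codimension $1$ because $\G_r$ exhausts $\Refle(\G)$, whereas a reflection in the linearized $\overline{\G}$ would force such ramification.) With this supplied your one-step argument is complete; what the paper's induction buys is not having to prove this fact, while your version makes explicit that the recursion terminates after a single quotient.
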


\begin{proof}
We argue by induction on the order of $G$, the case where $|G|=1$
being trivial. Also, if $\G=G$, then $V/\G$ is smooth so we may
assume that $\G \neq G$. As $\G$ is of index $2$, it is normal and
we denote by $\t : G \to \mub_2$ the unique morphism such that
$\G=\Ker(\t)$. Let $\G_r$ be the subgroup of $\G$ generated by
reflections belonging to $\G$. It is a normal subgroup of $G$ and
$$V/G=(V/\G_r)/(G/\G_r)\qquad\text{and}\qquad V/\G=(V/\G_r)/(\G/\G_r).$$
Two cases may occur:
\begin{itemize}
\item[$\bullet$] If $\G_r \neq 1$, then $V/\G_r$ is isomorphic to a vector space
on which $G/\G_r$ acts linearly as a reflection group since $V/G$ is
smooth (see also~\cite[proposition~3.5]{BBR}). So the result follows
from the induction hypothesis.

\item[$\bullet$] If $\G_r=1$, then $\t(s) \neq 1$ for all $s \in \Refle(G)$.
This implies that all reflections of $G$ have order $2$. Indeed, if
$s \in \Refle(G)$, then $\t(s^2)=1$ so $s^2$ cannot be a reflection,
hence is equal to $1$. This shows in particular that $\t(s)=\det(s)$
for all $s \in \Refle(G)$ and so $\t(w)=\det(w)$ for all $w \in W$.
In particular, $\G \subset \Sb\Lb_2(\CM)$ and the result follows.
\end{itemize}
The proof of the lemma is complete.
\end{proof}

\begin{remark}\label{rem:sl2-optimal}
We explain here why the general result stated in
Lemma~\ref{lem:gl2-ade} is in some sense optimal. Let $d \ge 3$.
Then there exists a reflection group $G$ in $\Gb\Lb_2(\CM)$
admitting a normal subgroup $\G$ such that $W/\G$ is cyclic of order
$d$ and $\CM^2/\G$ admits a non-simple singularity. Take for
instance $G=\mub_d \times \mub_d$ embedded through diagonal
matrices, and $\G \simeq \mub_d$ embedded through scalar
multiplication. Then $\bar{0}$ is not an ADE singularity of
$\CM^2/\G$.

In the same spirit, there exists a reflection group $G$ in
$\Gb\Lb_2(\CM)$ admitting a normal subgroup $\G$ such that $G/\G
\simeq \mub_2 \times \mub_2$ and such that $V/\G$ admits a
non-simple singularity. Take for instance $G=G(4,2,2)$ and
$\G=\Zrm(W)$. Then $G/\G$ is indeed isomorphic to $\mub_2 \times
\mub_2$ and $\G$ is isomorphic to $\mub_4$ acting through scalar
multiplication. So $\bar{0}$ is not an ADE singularity of
$\CM^2/\G$.
\end{remark}

\begin{cor}\label{coro:ade}
Let $X$ be a surface with only ADE singularities and let $G$ be a
group acting on $X$ such that $X/G$ is smooth. Let $\G$ be a
subgroup of $G$ of index $2$. Then $X/\G$ has only ADE
singularities.
\end{cor}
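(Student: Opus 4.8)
The plan is to reduce the statement to a formal-local computation at a single point and then invoke the already-established Lemma~\ref{lem:gl2-ade}. We may assume $G$ is finite (as it is in all intended applications). Fix $x \in X$, write $x_G \in X/G$ and $x_\G \in X/\G$ for its images, and let $G_x$ be the stabilizer of $x$ and $\G_x = G_x \cap \G$. Since $\G$ has index $2$ in $G$, the injection $G_x/\G_x \hookrightarrow G/\G$ shows that $\G_x$ has index $1$ or $2$ in $G_x$. As $G$ is finite, completion commutes with taking invariants for the finite group quotient, so $\hat\OC_{X/G,x_G} = (\hat\OC_{X,x})^{G_x}$ and $\hat\OC_{X/\G,x_\G} = (\hat\OC_{X,x})^{\G_x}$. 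Because $X$ has only ADE singularities, Remark~\ref{rem:sl2-ref} provides a finite subgroup $H \subset \Sb\Lb_2(\CM)$ (with $H=1$ if $x$ is smooth), containing no reflection, together with an isomorphism $\hat\OC_{X,x} \simeq \CM[[t,u]]^H$.

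Next I would lift the $G_x$-action to a linear action on $\CM[[t,u]]$. Set $B = \Spec \CM[[t,u]]$ and $B^\# = B \setminus \{0\}$; as in the proof of Lemma~\ref{lem:auto-ade}, the map $B^\# \to B^\#/H$ is a universal covering. Each $g \in G_x$, acting as an automorphism of $\CM[[t,u]]^H$, therefore lifts by Lemma~\ref{lem:auto-ade} to an automorphism of $\CM[[t,u]]$ that normalizes $H$; the collection of all automorphisms of $\CM[[t,u]]$ that normalize $H$ and descend into $G_x$ forms a group $\tilde G_x$ fitting into an extension $1 \to H \to \tilde G_x \to G_x \to 1$ (the kernel is exactly $H$ since the deck group of $B^\# \to B^\#/H$ is $H$). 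In particular $|\tilde G_x| = |H|\,|G_x|$ is finite, and since $\tilde G_x$ fixes the closed point it may be linearized by Cartan's lemma, so we may assume $\tilde G_x \subset \Gb\Lb_2(\CM)$. Writing $\tilde\G_x$ for the preimage of $\G_x$, we have $[\tilde G_x : \tilde\G_x] = [G_x : \G_x] \in \{1,2\}$, and taking invariants in stages yields $\hat\OC_{X/G,x_G} = \CM[[t,u]]^{\tilde G_x}$ and $\hat\OC_{X/\G,x_\G} = \CM[[t,u]]^{\tilde\G_x}$.

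The hypothesis that $X/G$ is smooth now says precisely that $\CM[[t,u]]^{\tilde G_x}$ is regular, and by the Shephard--Todd--Chevalley theorem this forces $\tilde G_x$ to be generated by $\Refle(\tilde G_x)$. Thus $\tilde G_x \subset \Gb\Lb_2(\CM)$ is a reflection group and $\tilde\G_x$ is a subgroup of index $1$ or $2$, so Lemma~\ref{lem:gl2-ade} applies: the closed point of $\CM^2/\tilde\G_x$, whose completed local ring is $\CM[[t,u]]^{\tilde\G_x} = \hat\OC_{X/\G,x_\G}$, is smooth or an ADE singularity. As $x$ was arbitrary, $X/\G$ has only ADE singularities.

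I expect the main obstacle to be the middle step, namely building the finite linear group $\tilde G_x$ that simultaneously contains $H$ and covers the $G_x$-action. This rests on the universal-covering description of $B^\# \to B^\#/H$ (guaranteeing that every automorphism of $\CM[[t,u]]^H$ lifts and necessarily normalizes $H$, exactly as in Lemma~\ref{lem:auto-ade}), on the finiteness $|\tilde G_x| = |H|\,|G_x|$, and on Cartan linearization; once $\tilde G_x$ is realized inside $\Gb\Lb_2(\CM)$, translating the smoothness of $X/G$ into the reflection-group property of $\tilde G_x$ and invoking Lemma~\ref{lem:gl2-ade} is routine.
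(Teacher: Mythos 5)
Your proposal is correct and follows essentially the same route as the paper's own proof: formal localization at a point, lifting the stabilizer action through Lemma~\ref{lem:auto-ade} to build the extension $1 \to H \to \Gti \to G_x \to 1$, translating smoothness of $X/G$ into $\Gti$ being a reflection group, and concluding with Lemma~\ref{lem:gl2-ade}. The only difference is cosmetic: you spell out the Cartan linearization and the Shephard--Todd--Chevalley step, which the paper leaves implicit in the phrase ``$\Gti$ acts as a reflection group on the tangent space.''
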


\begin{proof}
Let $x \in X$. It is sufficient to show that $X/\G_x$ has an ADE
singularity at the image of $x$. Note that we know that $X/G_x$ is
smooth at the image of $x$. Also, $\G_x$ has index $1$ or $2$ in
$G_x$. This shows that we may, and we will, assume that $G=G_x$ (and
so $\G=\G_x$).

By hypothesis, there exists a subgroup $H$ of $\Sb\Lb_2(\CM)$ such
that the complete local ring $\hat{\OC}_{X,x}$ is isomorphic to
$\CM[[t,u]]^H$. Let us identify $\OCh_{X,x}$ with $\CM[[t,u]]^H$, so
that the group $G$ acts on $\CM[[t,u]]^H$. By
Lemma~\ref{lem:auto-ade}, the action of an element $g \in G$ on
$\CM[[t,u]]^H$ lifts to an automorphism $\gti$ of $\CM[[t,u]]$: we
fix such a $\gti$ for all $g \in G$. Note that $\{\gti h~|~h \in
H\}$ is the set of all lifts of $g$ to $\CM[[t,u]]$. Let
$$\Gti=\{\gti h ~|~g \in G~\text{and}~h \in H\}.$$
Then $\Gti$ is a group (as $\gti h \gti' h'$ is a lift of $gg'$ so
belongs to $\Gti$) and we have an exact sequence
$$1 \longto H \longto \Gti \longto G \longto 1.$$
Let $\Gamt$ denote the inverse image of $\G$ in $\Gti$. Note that
$(\CM[[t,u]]^H)^{G}=\CM[[t,u]]^\Gti$ and
$(\CM[[t,u]]^H)^{\G}=\CM[[t,u]]^\Gamt$.

Now, by hypothesis, $(\CM[[t,u]]^H)^{G}$ is regular. This shows that
$\Gti$ acts as a reflection group on the tangent space of $\Spec
\CM[[t,u]]$ at its unique closed point, and so the result follows
from Lemma~\ref{lem:gl2-ade} because $\Gamt$ has index $1$ or $2$ in
$\Gti$.
\end{proof}

\end{document}